\newcommand{\stirling}[2]{\biggl[\genfrac{}{}{0pt}{}{#1}{#2}\biggr]}
\newcommand{\tstirling}[2]{\left[\genfrac{}{}{0pt}{}{#1}{#2}\right]}
\newtheoremstyle{teoremas}
{10pt}
{10pt}
{\itshape}
{}
{\bfseries}
{}
{.5em}
{}
\theoremstyle{teoremas}
\newtheorem{teo}{Theorem}[section]
\newtheorem{cor}[teo]{Corollary}
\newtheorem{lema}[teo]{Lemma}
\newtheorem{conj}[teo]{Conjecture}
\newtheorem{prop}[teo]{Proposition}
\theoremstyle{definition}
\newtheorem{defi}[teo]{Definition}
\newtheorem{ej}[teo]{Example}
\newtheorem{obs}[teo]{Remark}
\DeclareMathOperator{\ehr}{ehr}
\DeclareMathOperator{\rk}{rk}
\newcommand{\M}{\mathsf{M}}
\newcommand{\U}{\mathsf{U}}
\newcommand{\T}{\mathsf{T}}
\title{Matroids are not Ehrhart positive}
\author[L. Ferroni]{Luis Ferroni}
\thanks{This paper was written during the author's third year as a PhD Student at the University of Bologna, funded by the Marie Sk{\l}odowska-Curie fellowship INdAM-DP-COFUND-2015, Grant Number 713485.}
\address{Department of Mathematics, KTH Royal Institute of Technology, Stockholm, Sweden} 
\email{ferroni@kth.se}
\subjclass[2020]{52B40, 05B35, 52B20}
\begin{document}

\begin{abstract}
    In this article we disprove the conjectures asserting the positivity of the coefficients of the Ehrhart polynomial of matroid polytopes by De Loera, Haws and K\"oppe (2007) and of generalized permutohedra by Castillo and Liu (2015). We prove constructively that for every $n\geq 19$ there exist connected matroids on $n$ elements that are not Ehrhart positive. Also, we prove that for every $k\geq 3$ there exist connected matroids of rank $k$ that are not Ehrhart positive. Our proofs rely on our previous results on the geometric interpretation of the operation of circuit-hyperplane relaxation and our formulas for the Ehrhart polynomials of hypersimplices and minimal matroids. This allows us to give a precise expression for the Ehrhart polynomials of all sparse paving matroids, a class of matroids which is conjectured to be predominant and which contains the counterexamples arising from our construction. \\
    
    \smallskip
    \noindent {\scshape Keywords.} Ehrhart polynomials, matroid polytopes, polymatroids, generalized permutohedra.
\end{abstract}

\maketitle

\section{Introduction}\label{sec:one}

A fundamental result by Ehrhart \cite{ehrhart} shows that when $\mathscr{P}\subseteq \mathbb{R}^n$ is a lattice polytope, the function $\ehr(\mathscr{P}, -)$ counting the number of lattice points in the integral dilations of $\mathscr{P}$, namely
    \[ \ehr(\mathscr{P}, t) = \# (t\mathscr{P}\cap\mathbb{Z}^n),\]
is a polynomial in the variable $t$ of degree $d = \dim \mathscr{P}$. It is customary to call $\ehr(\mathscr{P},t)$ the \emph{Ehrhart polynomial} of $\mathscr{P}$. 

This result has motivated the study of these polynomials as both combinatorial and arithmetic invariants of lattice polytopes. A basic result in Ehrhart theory is that if $\dim \mathscr{P} = d$, then
    \begin{equation}\label{basica}\ehr(\mathscr{P}, t) = \operatorname{vol}(\mathscr{P}) t^d + \frac{1}{2}\operatorname{vol}(\partial\mathscr{P}) t^{d-1} + \cdots + 1,\end{equation}
where $\operatorname{vol}$ is the function that associates to a lattice polytope its relative volume.

Observe that equation \eqref{basica} reveals that the coefficients of degrees $d$, $d-1$ and $0$ of the Ehrhart polynomial of a polytope $\mathscr{P}$ of dimension $d$ are always positive. However, the coefficients accompanying the terms of degrees $1,\ldots, d-2$ are not as well-understood. Although following McMullen \cite{mcmullen} it is possible to derive general formulas for each of the coefficients of $\ehr(\mathscr{P},t)$, they are quite complicated.

For the members of some families of basic polytopes such as regular simplices, hypercubes, cross-polytopes and hypersimplices, there exist explicit formulas that permit to compute all of their Ehrhart coefficients. Many of these examples are addressed in the books by Beck and Robins \cite{beck} and Beck and Sanyal \cite{becksanyal}.

Some examples in dimension $3$, such as the so-called ``Reeve's tetrahedron'' \cite[Example 3.22]{beck}, exhibit that sometimes the coefficients of the Ehrhart polynomial of a polytope can be negative. In fact, Hibi et al. \cite{hibi-negative} showed that \emph{all} the coefficients of degrees $1,\ldots,d-2$ can be negative simultaneously. Even when one restricts to the family of $0/1$-polytopes, i.e., polytopes whose vertices have all of their coordinates equal to $0$ or $1$, it is possible to find examples that have negative Ehrhart coefficients; see \cite{liuorder} by Liu and Tsuchiya for an example.

When $\mathscr{P}$ is a lattice polytope such that $\ehr(\mathscr{P},t)$ has positive coefficients, we say that $\mathscr{P}$ is \emph{Ehrhart positive}. A main reference about Ehrhart positivity is Liu's survey \cite{liu}.
 
One of the main open problems in this framework was a conjecture posed in 2007 by De Loera, Haws and K\"oppe \cite[Conjecture 2]{deloera}.

\begin{conj}[\cite{deloera}]\label{deloera}
    Let $\M$ be a matroid and $\mathscr{P}(\M)$ be its base polytope. Then $\mathscr{P}(\M)$ is Ehrhart positive.
\end{conj}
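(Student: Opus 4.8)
The plan is to disprove Conjecture~\ref{deloera} by producing, for the required ranges of parameters, connected matroids whose basis polytopes carry an Ehrhart polynomial with a negative coefficient. I would look among the \emph{sparse paving matroids}. A rank-$k$ sparse paving matroid $M$ on $[n]$ is specified by a family $\{H_1,\dots,H_\lambda\}$ of pairwise well-separated $k$-subsets (its circuit-hyperplanes, any two meeting in at most $k-2$ elements), and geometrically $\mathscr{P}(M)$ is obtained from the hypersimplex $\Delta_{k,n}$ by truncating the vertices $e_{H_1},\dots,e_{H_\lambda}$: one checks that $\mathscr{P}(M)=\Delta_{k,n}\cap\bigcap_{i=1}^{\lambda}\{x:\sum_{j\in H_i}x_j\le k-1\}$, with no spurious vertices created. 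The first step is the geometric lemma that, because of the separation hypothesis, the $\lambda$ truncated corners are pairwise disjoint inside every dilate $t\Delta_{k,n}$, and that each corner contributes exactly the lattice points of a shifted copy of the minimal (Dinolt--Murty) matroid polytope $\mathscr{P}(T_{k,n})$. Lattice-point counting then gives the identity
\[
\ehr\bigl(\mathscr{P}(M),t\bigr)=\ehr\bigl(\Delta_{k,n},t\bigr)-\lambda\cdot\ehr\bigl(\mathscr{P}(T_{k,n}),t-1\bigr),
\]
which holds for every rank-$k$ sparse paving matroid on $n$ elements with $\lambda$ circuit-hyperplanes and reduces the problem to understanding the two pieces on the right.

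The second step is to make those two pieces explicit. For the hypersimplex one has the classical formula $\ehr(\Delta_{k,n},t)=\sum_{j\ge 0}(-1)^j\binom{n}{j}\binom{(k-j)t+n-1-j}{n-1}$. For the minimal matroid one first identifies $\mathscr{P}(T_{k,n})$ as a pyramid over the product of simplices $\Delta_{k-1}\times\Delta_{n-k-1}$, so that
\[
\ehr\bigl(\mathscr{P}(T_{k,n}),t\bigr)=\sum_{s=0}^{t}\binom{s+k-1}{k-1}\binom{s+n-k-1}{n-k-1},
\]
and then expands this by Vandermonde into a nonnegative integer combination of the polynomials $\binom{t+n-1-\ell}{n-1}$. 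Substituting both expansions into the identity above and collecting terms expresses every coefficient of $\ehr(\mathscr{P}(M),t)$ as an explicit closed-form expression in $k$, $n$ and $\lambda$.

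The third step is the sign analysis: pick $k$, $n$ and $\lambda$ so that one of those coefficients is strictly negative. The coefficients of $t^{d}$, $t^{d-1}$ and $t^0$ (with $d=\dim\mathscr{P}(M)$) are forced positive by \eqref{basica}, and a short computation rules out the linear coefficient as well throughout the sparse-paving regime, so the offending coefficient must sit at an intermediate degree $j$; I would locate the right $j$ by a finite search and then prove that for a suitable parametrized family
\[
[t^j]\ehr\bigl(\mathscr{P}(M),t\bigr)=[t^j]\ehr\bigl(\Delta_{k,n},t\bigr)-\lambda\cdot[t^j]\ehr\bigl(\mathscr{P}(T_{k,n}),t-1\bigr)
\]
turns negative once $\lambda$ is taken large enough, while still not exceeding the maximum number of circuit-hyperplanes available to a sparse paving matroid of rank $k$ on $n$ elements (which one realises via an explicit combinatorial design, e.g.\ a Steiner-type system, and which also guarantees connectedness: a single circuit-hyperplane already obstructs a direct-sum splitting with a small summand once $n$ is large relative to $k$). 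Optimising $k$ against $n$ should yield a rank-$3$ family that works for every $n$ beyond the threshold recorded as $n=19$, and for each fixed $k\ge 3$ a family that works for suitable $n=n(k)$. Since matroid basis polytopes are generalized permutohedra, the failure of Castillo--Liu's conjecture follows at once.

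The step I expect to be hardest is precisely this sign analysis. The subtracted polynomial $\lambda\,\ehr(\mathscr{P}(T_{k,n}),t-1)$ has, in the monomial basis, coefficients that are dwarfed by those of $\ehr(\Delta_{k,n},t)$ near both ends of the degree range, so negativity can occur only in a narrow band of intermediate degrees and only after a substantial amount of cancellation; identifying that band and controlling the cancellation calls for sharp, not merely asymptotic, estimates on Stirling-number-type sums. On top of that there is a genuine tension between taking $\lambda$ large enough to flip a coefficient and keeping $\lambda$ below the packing bound for circuit-hyperplanes, so the choice of $k$ relative to $n$ must be tuned carefully --- this is presumably what produces the exact value $n=19$ --- and pinning down an exact threshold rather than a vague statement forces a finite verification of the small cases together with a monotonicity argument for the rest. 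The existence of the required sparse paving matroids and their connectedness, by comparison, I expect to be routine.
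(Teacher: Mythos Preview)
Your proposal is correct and follows essentially the same route as the paper: sparse paving matroids, the identity $\ehr(M,t)=\ehr(U_{k,n},t)-\lambda\,\ehr(T_{k,n},t-1)$, explicit formulas for both pieces, and a sign analysis once $\lambda$ is large enough. Two tactical choices in the paper sharpen what you leave open. First, rather than searching for the offending degree $j$, the paper commits from the outset to the \emph{quadratic} coefficient, proving the clean bounds $[t^2]\ehr(T_{k,n},t-1)\ge\frac{1}{k(n-1)}$ and $[t^2]\ehr(U_{k,n},t)\le\binom{k+1}{2}H_{n-1}^2$ via weighted Lah and Stirling numbers; this turns your ``sharp, not merely asymptotic, estimates'' into two short lemmas. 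Second, for the existence of many circuit-hyperplanes the paper does not use Steiner systems (which do not exist for all parameters) but the Graham--Sloane averaging argument, giving $\lambda\ge\frac{1}{n}\binom{n}{k}$ for \emph{every} $k,n$; this is what makes the argument uniform. One small correction: the threshold $n=19$ is not achieved at rank~$3$. Rank~$3$ sparse paving counterexamples appear only for $n\ge 3589$; the small-$n$ counterexamples (including $n=19,20$) live near rank $n/2$, and the paper takes $k=9$ there.
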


Generalizing the notion of matroid to that of polymatroid allows to construct a broader class of polytopes, which correspond to the base polytopes of polymatroids. Polymatroids were introduced by Edmonds and Rota \cite{edmonds-rota} and have proven useful to address many important problems in combinatorial optimization and algebraic combinatorics. In \cite{postnikov} Postnikov studied certain polytopes arising as deformations of permutohedra, which he named ``generalized permutohedra''. Later it was realised that generalized permutohedra and base polytopes of polymatroids are (up to a translation) the same objects.

In 2015 \cite[Conjecture 1.2]{castilloliu} Castillo and Liu posed the following conjecture, which is an extension of Conjecture \ref{deloera} to all generalized permutohedra.

\begin{conj}[\cite{castilloliu}]\label{castilloliu}
    If $\mathscr{P}$ is an integral generalized permutohedron, then $\mathscr{P}$ is Ehrhart positive.
\end{conj}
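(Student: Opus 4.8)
Since every matroid basis polytope is a generalized permutohedron, to refute both conjectures it suffices to exhibit one matroid $M$ whose basis polytope $\mathscr{P}(M)$ is not Ehrhart positive, and then to make the construction quantitative. The plan is to work exclusively with \emph{sparse paving matroids}, whose basis polytopes are hypersimplices with disjoint ``corners'' sliced off. If $M$ has rank $k$ on $n$ elements and $\mathcal{H}\subseteq\binom{[n]}{k}$ is its set of circuit-hyperplanes, then
\[
\mathscr{P}(M)=\Delta_{k,n}\cap\bigcap_{A\in\mathcal{H}}\Bigl\{x:\ \textstyle\sum_{i\in A}x_i\le k-1\Bigr\},\qquad
\Delta_{k,n}:=\mathscr{P}(U_{k,n})=\Bigl\{x\in[0,1]^n:\textstyle\sum_i x_i=k\Bigr\},
\]
and, because distinct circuit-hyperplanes meet in at most $k-2$ points, the cut-off regions $C_A:=\Delta_{k,n}\cap\{\sum_{i\in A}x_i\ge k-1\}$ are pairwise disjoint outside $\mathscr{P}(M)$. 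The first step is to identify $C_A$: substituting $y_i=1-x_i$ for $i\in A$ and $y_j=x_j$ for $j\notin A$ realizes $C_A$ as the pyramid with apex $e_A:=\sum_{i\in A}e_i$ over the facet $F_A:=\Delta_{k,n}\cap\{\sum_{i\in A}x_i=k-1\}$, and $F_A$ is affinely and unimodularly a product of two standard simplices of dimensions $k-1$ and $n-k-1$; hence
\[
\ehr(F_A,t)=\binom{t+k-1}{k-1}\binom{t+n-k-1}{n-k-1},\qquad \ehr(C_A,t)=\sum_{\ell=0}^{t}\ehr(F_A,\ell).
\]

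For the bookkeeping, one checks $C_A\cap\mathscr{P}(M)=F_A$ and $C_A\cap C_B\subseteq F_A\cap F_B\subseteq\mathscr{P}(M)$ for $A\neq B$, so the lattice points of $\Delta_{k,n}$ split cleanly between $\mathscr{P}(M)$ and the half-open pieces $C_A\setminus F_A$; counting and using the $\mathfrak{S}_n$-symmetry (all $C_A$ are congruent) gives the master identity
\[
\ehr(\mathscr{P}(M),t)=\ehr(\Delta_{k,n},t)-|\mathcal{H}|\cdot R_{k,n}(t),\qquad R_{k,n}(t):=\sum_{\ell=0}^{t-1}\binom{\ell+k-1}{k-1}\binom{\ell+n-k-1}{n-k-1}.
\]
Here $R_{k,n}$ is a degree-$(n-1)$ polynomial with $R_{k,n}(0)=0$, $R_{k,n}(1)=1$, and (for instance) $[t^{1}]R_{k,n}=\bigl((n-1)\binom{n-2}{k-1}\bigr)^{-1}$; more generally one analyzes its coefficients via its factorization $t(t+1)\cdots(t+M)\,q(t)$ with $M=\max(k-1,n-k-1)$. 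To disprove the conjectures it now suffices to find a rank $k$, an $n$, an index $j$ with $1\le j\le n-3$, and a connected sparse paving matroid $M$ such that $[t^{j}]R_{k,n}>0$ and
\[
|\mathcal{H}|\cdot[t^{j}]R_{k,n}\ >\ [t^{j}]\ehr(\Delta_{k,n},t),
\]
since then $[t^{j}]\ehr(\mathscr{P}(M),t)<0$.

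What remains is to make this inequality hold. On one side, $[t^{j}]\ehr(\Delta_{k,n},t)$ must be bounded above; this is feasible from the classical formula for the Ehrhart polynomial of the hypersimplex, and one simultaneously bounds $[t^{j}]R_{k,n}$ below from the factorization, thereby choosing a good index $j$ (whose optimal location depends on how $k$ compares with $n$) and an explicit threshold. On the other side, one needs connected sparse paving matroids with many circuit-hyperplanes: a collection $\mathcal{H}\subseteq\binom{[n]}{k}$ with pairwise intersections at most $k-2$ is precisely a packing --- equivalently a binary constant-weight code of minimum distance $4$ --- and near-optimal such collections (from Steiner systems $S(k-1,k,n)$ when available, and from greedy or probabilistic designs in general) have about $\tfrac1k\binom{n}{k-1}$ blocks, which for each fixed $k\ge3$ eventually exceeds the threshold as $n\to\infty$, and, using ranks near $n/2$ where the admissible packings are largest and explicit constructions are available, already exceeds it for all $n\ge19$; connectivity of the matroid built from such an $\mathcal{H}$ is immediate. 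The crux, and the main obstacle, is exactly this quantitative matching: the threshold $[t^{j}]\ehr(\Delta_{k,n},t)/[t^{j}]R_{k,n}$ and the largest realizable $|\mathcal{H}|$ both grow rapidly in $n$, so one must pick $j$ judiciously and estimate all three quantities --- the hypersimplex coefficient, the coefficient of $R_{k,n}$, and the size of an explicit design --- sharply enough to push the inequality through, and then verify it directly at the small parameters responsible for the bounds $n\ge19$ and rank $\ge3$.
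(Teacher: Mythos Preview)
Your proposal is correct and is essentially the paper's approach: your master identity is exactly Corollary~\ref{formusp}, since your $R_{k,n}(t)=\sum_{\ell=0}^{t-1}\binom{\ell+k-1}{k-1}\binom{\ell+n-k-1}{n-k-1}$ coincides with $\ehr(T_{k,n},t-1)$ (your half-open pyramid $C_A\setminus F_A$ is unimodularly the half-open polytope of the minimal matroid), and your packing/constant-weight-code lower bound for $|\mathcal{H}|$ is the Graham--Sloane bound of Theorem~\ref{codes}. The only refinement the paper adds is to commit to the index $j=2$ and carry out the explicit estimates (Propositions~\ref{cotainf} and~\ref{cotasup}) needed to push the inequality through for all $k\ge 3$ and all $n\ge 19$.
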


There was much evidence to support these two conjectures. In what follows we give a brief overview of some known results backing up these two assertions. In \cite{deloera} De Loera, Haws and K\"oppe computed the Ehrhart polynomials of an extensive list of base and independence polytopes of matroids and polymatroids and in all cases the coefficients were observed to be positive. Also, they were able to prove that rank $2$ uniform matroids were Ehrhart positive; this result was later extended by Ferroni in \cite{ferroni1}, proving that uniform matroids of all ranks are Ehrhart positive.

\begin{teo}[\cite{ferroni1}]
    If $\M$ is a uniform matroid, then its base polytope $\mathscr{P}(\M)$ is Ehrhart positive.
\end{teo}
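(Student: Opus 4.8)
\smallskip
\noindent\emph{Proof idea.} The plan is to identify the basis polytope $\mathscr P(U_{k,n})$ of the uniform matroid $U_{k,n}$ with the hypersimplex
\[\Delta_{k,n}=\bigl\{x\in[0,1]^n:\ x_1+\cdots+x_n=k\bigr\},\]
and to show that every $\Delta_{k,n}$ is Ehrhart positive; this gives the theorem, since every uniform matroid is some $U_{k,n}$ and $\mathscr P(U_{0,n})$, $\mathscr P(U_{n,n})$ are single points. First I would shrink the range of $k$: the unimodular involution $x\mapsto\mathbf 1-x$ identifies $\Delta_{k,n}$ with $\Delta_{n-k,n}$, and $\Delta_{1,n}$ is the standard simplex, with $\ehr(\Delta_{1,n},t)=\binom{t+n-1}{n-1}=\tfrac1{(n-1)!}\prod_{j=1}^{n-1}(t+j)$ manifestly Ehrhart positive; so it suffices to treat $2\le k\le n/2$. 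Writing $d=\dim\Delta_{k,n}=n-1$, the coefficients of $t^0,t^{d-1},t^d$ are positive by \eqref{basica} and those of $t^1,t^{d-2},t^{d-3}$ by Proposition~\ref{coef}, which also settles the case $d\le6$ outright; hence the only coefficients still in need of control are those of $t^m$ with $2\le m\le n-5$, and only when $n\ge8$.

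To handle these I would begin from the closed formula obtained by inclusion--exclusion applied to the lattice points of the slice $\{x\in[0,t]^n:\sum_i x_i=kt\}$ of the dilated cube:
\begin{multline*}
\ehr(\Delta_{k,n},t)=\sum_{i=0}^{k-1}(-1)^i\binom ni\binom{(k-i)t-i+n-1}{n-1}\\
=\frac1{(n-1)!}\sum_{i=0}^{k-1}(-1)^i\binom ni\prod_{\ell=1}^{n-1}\bigl((k-i)t+\ell-i\bigr),
\end{multline*}
a genuine polynomial identity, since the terms with $i\ge k$ contribute $0$ to the lattice-point count for every large $t$. Two structural remarks trim the problem. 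Substituting $t=-j$ turns the inner binomial coefficient into $\binom{n-1-jk+(j-1)i}{n-1}$, which vanishes for all $i=0,\dots,k-1$ as soon as $j<n/k$ --- equivalently, via Ehrhart reciprocity, the open hypersimplex $(j\Delta_{k,n})^\circ$ contains no lattice point in that range --- so $\ehr(\Delta_{k,n},t)$ is divisible by $\prod_{j=1}^{\lceil n/k\rceil-1}(t+j)$, a product of linear factors that are harmless for positivity and may be divided out (this lowers the degree of the polynomial one has to analyse, substantially so when $k$ is small). And, expanding the product,
\[[t^m]\,\ehr(\Delta_{k,n},t)=\frac1{(n-1)!}\sum_{i=0}^{k-1}(-1)^i\binom ni\,(k-i)^m\,e_{n-1-m}(1-i,2-i,\dots,n-1-i),\]
with $e_r$ the $r$-th elementary symmetric function; the goal becomes to prove this alternating sum is strictly positive.

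The crux is a purely combinatorial positivity statement about that sum. For each $i$ I would split the inner product,
\[\prod_{\ell=1}^{n-1}\bigl((k-i)t+\ell-i\bigr)=\Bigl(\prod_{r=1}^{\,n-1-i}\bigl((k-i)t+r\bigr)\Bigr)\Bigl(\prod_{s=0}^{\,i-1}\bigl((k-i)t-s\bigr)\Bigr),\]
the first factor having nonnegative coefficients, the second being the falling factorial of $(k-i)t$, whose expansion introduces signed Stirling numbers of the first kind --- the sole source of cancellation. Substituting this decomposition and interchanging the two summations rewrites $[t^m]\ehr(\Delta_{k,n},t)$ as a sum in which each inner sum over $i$ should, after simplification, collapse to a nonnegative quantity: I would expect it to reduce to a nonnegative integer combination of Eulerian numbers $A(r-1,s-1)$ --- which are precisely the normalized volumes of smaller hypersimplices, hence positive --- and binomial coefficients. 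Proving this lemma, that is, showing the negative contributions carried by the Stirling factors are always dominated, is the main obstacle and is where essentially all of the effort goes. Should no sufficiently clean closed form be available, the fallback is an induction on $k$: isolate the extreme summand $i=k-1$, relate the remainder to $\ehr(\Delta_{k-1,n},t)$, and take as base cases $k\le2$ --- where the formula has at most two summands and positivity of each coefficient follows from an elementary comparison of $\binom{2t+n-1}{n-1}$ with $n\binom{t+n-2}{n-1}$ --- together with the low dimensions and the three extreme-coefficient results already provided by Proposition~\ref{coef}.
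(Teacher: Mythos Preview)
The paper does not prove this theorem; it quotes it from the author's earlier work \cite{ferroni1}. From the way that result is invoked later (Propositions~\ref{cotasup} and~\ref{posi}) one can read off the shape of the original argument: each Ehrhart coefficient is written as
\[
[t^m]\,\ehr(U_{k,n},t)=\frac{1}{(n-1)!}\sum_{\ell=0}^{k-1} W(\ell,n,m+1)\,A(m,k-1-\ell),
\]
where $A(m,j)$ are Eulerian numbers and $W(\ell,n,m+1)$ are the \emph{weighted Lah numbers} of \cite{ferroni1}, nonnegative integers counting partitions of $\{1,\dots,n\}$ into $m+1$ linearly ordered blocks of total weight $\ell$. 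Once such a formula is in hand, positivity is immediate: every summand is a product of two nonnegative integers, and one checks that not all of them vanish. The entire content of the proof lies in deriving this manifestly positive expansion.

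Your proposal is a plan, not a proof, and you say so yourself: the step you call ``the main obstacle'' --- rearranging the alternating inclusion--exclusion sum into something visibly nonnegative --- is left open. Your instinct that Eulerian numbers should emerge is exactly right, but you are missing the second ingredient of the positive decomposition, the weighted Lah numbers (equivalently, a suitable refinement of the unsigned Stirling numbers of the first kind); without that combinatorial object or an equivalent, there is no mechanism to absorb the signs carried by the Stirling factors in your splitting $\prod_{s=0}^{i-1}((k-i)t-s)$. The fallback induction on $k$ is also too vague to rescue the argument: isolating the summand $i=k-1$ in the inclusion--exclusion formula does not leave behind $\ehr(\Delta_{k-1,n},t)$ --- the remaining terms have the wrong powers $(k-i)^m$ and the wrong shifts --- so the inductive relation you gesture at does not exist in the form you need.
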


In \cite{ferroni3} Ferroni used this result to show that independence polytopes of uniform matroids are Ehrhart positive as well.
In \cite{postnikov} Postnikov gave a proof of the fact that all the members of a quite large family of generalized permutohedra, which he called ``$\mathcal{Y}$-generalized permutohedra'', were Ehrhart positive. This family can be described as the set of all (positive) Minkowski sums of dilations of standard simplices that satisfy a certain modularity property; however, in spite of being very general, it does not contain the class of matroid polytopes as a subfamily, as is pointed out in \cite{ardilabenedetti}.

Also, in the same paper in which they conjectured the Ehrhart positivity of generalized permutohedra, Castillo and Liu proved that the Ehrhart coefficients of degree $d-2$ and $d-3$ of a generalized permutohedron are always positive. In \cite{castillo2020todd} they proved that the linear coefficient is always positive. The following is a summary of their results.

\begin{teo}[\cite{castilloliu}, \cite{castillo2020todd}]\label{coef}
    Let $\mathscr{P}$ be an integral generalized permutohedron of dimension $d\geq 3$, and let $\ehr(\mathscr{P},t)$ denote its Ehrhart polynomial. Then:
    \begin{itemize}
        \item $[t^1] \ehr(\mathscr{P},t)$ is positive.
        \item $[t^{d-2}] \ehr(\mathscr{P},t)$ is positive.
        \item $[t^{d-3}] \ehr(\mathscr{P},t)$ is positive.
    \end{itemize}
\end{teo}

A different proof of the positivity of the linear coefficient was independently found by Jochemko and Ravichandran in \cite{jochemko}. It is worth mentioning that the positivity of the linear term was a particularly important clue, since most of the known examples of polytopes with at least one negative Ehrhart coefficient do have a negative linear term.

Observe that from Castillo and Liu's result, it follows that a generalized permutohedron of dimension $d\leq 5$ is automatically Ehrhart positive. For example, for $d=5$, the coefficients of degree $1$, $2$ and $3$ are covered by Theorem \ref{coef} and the coefficients of degree $0$, $4$ and $5$ are covered by \eqref{basica}. They were able to extend this result to all generalized permutohedra of dimension $d\leq 6$.

However, in spite of all this evidence, in the present article we will disprove Conjecture \ref{deloera} and thus also Conjecture \ref{castilloliu}. A related conjecture that will also be disproved is the lower bound part of \cite[Conjecture 1.5]{ferroni2}.
We will construct explicit examples of matroids that fail to have positive Ehrhart coefficients. Thus, we definitely answer on the negative the Ehrhart positivity question for matroids and generalized permutohedra.

\subsection*{Outline and main results}

In Section \ref{sec:two} we describe the basic properties of matroids, with special emphasis on the notion of circuit-hyperplane relaxation and its relation with the class of sparse paving matroids. In Section \ref{sec:three} we analyze the geometric counterpart of the results and definitions described in Section \ref{sec:two}, we also discuss the equivalence between generalized permutohedra and base polytopes of polymatroids. In Section \ref{sec:four} we review the Ehrhart theory of uniform and minimal matroids and we give an explicit formula for the Ehrhart polynomial of all sparse paving matroids. In Section \ref{sec:five} we use a result from coding theory and the formula for sparse paving matroids to construct a counterexample to Conjecture \ref{deloera}. 

\begin{teo}
    There exists a connected matroid $\M$ of cardinality $20$, rank $9$, having $159562$ bases, such that the base polytope $\mathscr{P}(\M)$ is not Ehrhart positive.
\end{teo}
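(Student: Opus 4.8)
The plan is to prove the statement \emph{constructively}: exhibit an explicit matroid $M$, compute $\ehr(\mathscr{P}(M),t)$ in closed form, and point to a negative coefficient. Identify the ground set with $\mathbb{Z}_{20}$, fix $c\in\mathbb{Z}_{20}$, and let $M$ be the sparse paving matroid of rank $9$ whose circuit-hyperplanes are precisely the $9$-subsets $A\subseteq\mathbb{Z}_{20}$ with $\sum_{a\in A}a\equiv c\pmod{20}$. Two distinct $9$-subsets sharing eight elements have element-sums differing by a nonzero element of $\mathbb{Z}_{20}$, so they cannot both lie in this family; hence it is a legitimate collection of circuit-hyperplanes of a sparse paving matroid. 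Since translating a $9$-set by $1$ shifts its element-sum by $9$ and $\gcd(9,20)=1$, the cyclic group permutes the $20$ residue classes of $9$-subsets transitively, so each class has $\binom{20}{9}/20=167960/20=8398$ members; therefore $M$ has $167960-8398=159562$ bases. A short argument---for any two ground-set elements one finds a $10$-subset containing both and containing no circuit-hyperplane, which is then a circuit of $M$---shows that $M$ is connected, so $\dim\mathscr{P}(M)=19$.

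For the Ehrhart computation one uses that $\ehr(\mathscr{P}(\cdot),t)$ is a \emph{valuative} invariant of matroid base polytopes. Relaxing a single circuit-hyperplane $S$ enlarges $\mathscr{P}(M)$ to $\mathscr{P}(M)\cup R_S$, where $R_S:=\Delta_{9,20}\cap\{\sum_{i\in S}x_i\ge 8\}$, and the two pieces meet along the facet $F_S:=\Delta_{9,20}\cap\{\sum_{i\in S}x_i=8\}\cong\Delta_{8,9}\times\Delta_{1,11}$; moreover $R_S$ is itself the base polytope of the matroid obtained from $U_{9,10}$ by replacing one element with a parallel class of size $11$. The sparse paving hypothesis forces two distinct cut regions to meet only inside $\partial\mathscr{P}(M)$, so the half-open pieces $R_S\setminus F_S$, ranging over the $8398$ circuit-hyperplanes, partition $\Delta_{9,20}\setminus\mathscr{P}(M)$. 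Slicing $R_S$ along the coordinate given by the sum over the parallel class yields $\ehr(R_S,t)=\sum_{s=0}^{t}\binom{s+10}{10}\binom{s+8}{8}$ and $\ehr(F_S,t)=\binom{t+10}{10}\binom{t+8}{8}$, so valuativity gives
\[
  \ehr(\mathscr{P}(M),t)=\ehr(\Delta_{9,20},t)-8398\sum_{s=0}^{t-1}\binom{s+10}{10}\binom{s+8}{8}.
\]
Since the Ehrhart polynomial of the hypersimplex $\Delta_{9,20}$ is given by a known closed formula, the right-hand side is an explicit polynomial of degree $19$.

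It then remains to expand this polynomial in the monomial basis and locate an index $j$ with $[t^{j}]\ehr(\mathscr{P}(M),t)<0$. By \eqref{basica} and Proposition~\ref{coef} the coefficients of degrees $0,1$ and $16,17,18,19$ are positive, so $j$ must lie in $\{2,\dots,15\}$; the candidate is a low-order coefficient, plausibly $[t^{3}]$. This final check is the only real obstacle: whether $8398$ times the relevant coefficient of the partial sum $\sum_{s<t}\binom{s+10}{10}\binom{s+8}{8}$ outweighs the corresponding (positive) coefficient of $\ehr(\Delta_{9,20},t)$ is genuinely delicate---this is exactly what makes the number of circuit-hyperplanes, and hence the size of the ground set, matter, with smaller ground sets treated by an analogous but separate construction. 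Once the sign is confirmed, $\mathscr{P}(M)$ is not Ehrhart positive, which refutes Conjecture~\ref{deloera} and, a fortiori, Conjecture~\ref{castilloliu}.
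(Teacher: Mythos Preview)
Your construction and formula are exactly the paper's: the matroid is the Graham--Sloane code $C_c$ (and your cyclic-shift argument that $\gcd(9,20)=1$ forces all classes to have size $8398$ is slicker than the paper's ``it can be proved''), and your valuative identity $\ehr(\mathscr{P}(M),t)=\ehr(\Delta_{9,20},t)-8398\sum_{s=0}^{t-1}\binom{s+10}{10}\binom{s+8}{8}$ is precisely $\ehr(U_{9,20},t)-8398\,\ehr(T_{9,20},t-1)$, which the paper obtains by citing Theorem~\ref{ehrhartrelax} rather than rederiving it geometrically as you do.

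The only gap is the one you flag yourself: you never actually carry out the ``final check.'' The paper does, and finds that \emph{both} $[t^2]$ and $[t^3]$ are negative (explicitly $[t^2]=-\tfrac{142179543511}{15437822400}$ and $[t^3]=-\tfrac{4816883312963}{51459408000}$), so your guess of $[t^3]$ is on target but $[t^2]$ works too. Since this numerical verification is the entire content of the theorem---everything else is setup---you should include it. Your connectivity sketch is also a bit loose; it is cleaner to observe that any sparse paving matroid with $2\le k\le n-2$ is automatically loopless, coloop-free, and connected.
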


Moreover, we give an explicit description of this matroid. In Section \ref{sec:six} we extend this construction to higher dimensions. 

\begin{teo}\label{greaterthan9}
    For every $n\geq 19$ there exists a connected matroid $\M$ on $n$ elements such that $\mathscr{P}(\M)$ is not Ehrhart positive.
\end{teo}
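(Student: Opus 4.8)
The base case $n=20$ is the matroid exhibited just above, and the plan is to realise it as one member of a flexible family and then push it through all $n\ge 19$. The family consists of \emph{sparse paving matroids}: fix $n$, a rank $k$ with $3\le k\le n-3$, and a family $\mathcal H$ of $k$-subsets of $[n]$ that pairwise intersect in at most $k-2$ elements; declaring every member of $\mathcal H$ to be a circuit-hyperplane produces a matroid $M=M(n,k,\mathcal H)$ whose basis polytope $\mathscr P(M)$ is obtained from the hypersimplex $\mathscr P(U_{k,n})=\Delta_{k,n}$ by removing the vertices $e_H=\sum_{i\in H}e_i$ for $H\in\mathcal H$. Such an $M$ is connected: it is loopless and coloopless for $3\le k\le n-3$ (assuming, as we may, that $\mathcal H$ is not absurdly large), and a loopless coloopless sparse paving matroid is connected. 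So it is enough to force a single negative Ehrhart coefficient.

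The key observation is that, because the members of $\mathcal H$ meet pairwise in at most $k-2$ elements, the caps $\Delta_{k,n}\cap\{\sum_{i\in H}x_i>k-1\}$ for $H\in\mathcal H$ are pairwise disjoint, each is precisely what is removed from $\Delta_{k,n}$ when one passes to $\mathscr P(M)$, and each is carried onto the others by a coordinate permutation. Hence there is a single polynomial $Q_{k,n}(t)$, depending only on $n$ and $k$, with
\[ \ehr\bigl(\mathscr P(M),t\bigr)=\ehr(\Delta_{k,n},t)-\lvert\mathcal H\rvert\cdot Q_{k,n}(t); \]
counting lattice points in the $t$-th dilate of a single cap yields the closed form $Q_{k,n}(t)=\sum_{s=0}^{t-1}\binom{s+k-1}{k-1}\binom{s+n-k-1}{n-k-1}$, a polynomial of degree $n-1$ with $Q_{k,n}(0)=0$. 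In particular the Ehrhart polynomial of a sparse paving matroid depends only on $n$, $k$ and $\lvert\mathcal H\rvert$. Now $\ehr(\Delta_{k,n},t)$ has only positive coefficients, which is exactly the theorem on uniform matroids recalled above; so to prove that $\mathscr P(M)$ is not Ehrhart positive it suffices to find an index $j$ with $[t^j]Q_{k,n}(t)>0$ and
\[ \lvert\mathcal H\rvert>\frac{[t^j]\ehr(\Delta_{k,n},t)}{[t^j]Q_{k,n}(t)}. \]

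Two ingredients remain to be supplied. First, for a well chosen $k=k(n)$ one must locate such a $j$ among the degrees $\{2,\dots,n-5\}$ — the only ones not already known to be positive, by \eqref{basica} and Proposition~\ref{coef} — at which $[t^j]Q_{k,n}(t)>0$ and the displayed ratio is small; this rests on the closed formula for $Q_{k,n}$ (expand $\binom{s+k-1}{k-1}\binom{s+n-k-1}{n-k-1}$ in powers of $s$ with positive coefficients, then sum the power-sum polynomials $\sum_{s=0}^{t-1}s^m$, which do introduce negative terms) together with the known expansion of $\ehr(\Delta_{k,n},t)$. Second, one must actually build a valid $\mathcal H$ of at least that size; a greedy construction already gives $\lvert\mathcal H\rvert\ge\binom nk/(k(n-k)+1)$, and near-Steiner systems do somewhat better, which I expect to be comfortably more than enough once the coefficient in question is pinned down. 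The step I expect to be the genuine obstacle is making the two ingredients cohere \emph{uniformly in $n$}: one must choose $k(n)$ and $j$ so that the inequality holds for \emph{every} $n\ge 19$, which in practice means verifying the first few values (among them $n=19$, and $n=20$ which recovers the matroid above) by direct computation and then establishing a clean monotonicity estimate that compares $[t^j]\ehr(\Delta_{k,n})/[t^j]Q_{k,n}$ with the lower bound for $\lvert\mathcal H\rvert$ throughout the remaining cofinite range.
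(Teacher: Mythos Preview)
Your framework is exactly the paper's: sparse paving matroids, the identity $\ehr(\mathscr P(M),t)=\ehr(\Delta_{k,n},t)-\lvert\mathcal H\rvert\,Q_{k,n}(t)$, and the search for a coefficient that can be driven negative by taking $\lvert\mathcal H\rvert$ large. Your $Q_{k,n}(t)=\sum_{s=0}^{t-1}\binom{s+k-1}{k-1}\binom{s+n-k-1}{n-k-1}$ is precisely the paper's $\ehr(T_{k,n},t-1)$, so the set-up is not merely analogous but identical.

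What is missing is the execution of the ``two ingredients'' you explicitly leave open, and the specific choices you gesture at are not sharp enough. First, the greedy bound $\lvert\mathcal H\rvert\ge\binom{n}{k}/(k(n-k)+1)$ is substantially weaker than what is needed: for $k$ near $n/2$ it is of order $\binom{n}{k}/n^2$, whereas the paper uses the Graham--Sloane code construction to get $\lvert\mathcal H\rvert\ge\frac{1}{n}\binom{n}{k}$, and this factor of $n$ matters in the final inequality. Second, you never commit to a value of $j$; the paper fixes $j=2$ throughout and proves the two concrete estimates $[t^2]Q_{k,n}(t)\ge\frac{1}{k(n-1)}$ and $[t^2]\ehr(\Delta_{k,n},t)\le\binom{k+1}{2}H_{n-1}^2$, after which the comparison $\binom{k+1}{2}H_{n-1}^2<\frac{1}{n}\binom{n}{k}\cdot\frac{1}{k(n-1)}$ is elementary for $9\le k\le n-9$ and $n\ge 20$ (plus a finite check). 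Your worry that $Q_{k,n}$ might have negative coefficients is unfounded: the closed form in Proposition~\ref{formulasa} shows every coefficient of $\ehr(T_{k,n},t-1)$ is positive, so any $j$ works in that respect. Finally, the case $n=19$ does not fall to the Graham--Sloane bound at all; the paper handles it separately by invoking the Brouwer--Etzion tables, which guarantee a sparse paving matroid with $k=9$ and $6726$ circuit-hyperplanes. So your plan is correct, but to turn it into a proof you must upgrade the packing bound, pin down $j=2$ with explicit coefficient inequalities, and treat $n=19$ by an ad hoc appeal to known code sizes.
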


We also prove that we can find counterexamples to Conjecture \ref{deloera} for all ranks greater than or equal to $3$.

\begin{teo}\label{main3}
    For every $k\geq 3$ and $n$ sufficiently large there exists a connected matroid $\M$ of rank $k$ and cardinality $n$ such that $\mathscr{P}(\M)$ is not Ehrhart positive.
\end{teo}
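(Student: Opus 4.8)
My plan is to realize the counterexamples as connected \emph{sparse paving} matroids and to control their Ehrhart polynomials through the single local move of relaxing a circuit-hyperplane. Fix $k\ge 3$; I would work with a sparse paving matroid $M$ of rank $k$ on $n$ elements whose circuit-hyperplanes form a family $\mathcal H$ of $k$-subsets of the ground set that pairwise meet in at most $k-2$ points. Writing $\lambda=|\mathcal H|$, one has
\[
\mathscr{P}(M)=\Bigl\{x\in\mathscr{P}(U_{k,n}):\ \sum_{i\in H}x_i\le k-1\ \text{ for all }H\in\mathcal H\Bigr\},
\]
so $\mathscr{P}(U_{k,n})\setminus\mathscr{P}(M)$ is the union over $H\in\mathcal H$ of the regions $R_H=\{x\in\mathscr{P}(U_{k,n}):\sum_{i\in H}x_i>k-1\}$. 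The crucial point is that these regions are pairwise disjoint and each is already cut out by its single defining inequality: for $H_1\ne H_2$ and any $x\in\mathscr{P}(U_{k,n})$,
\[
\sum_{i\in H_1}x_i+\sum_{i\in H_2}x_i=\sum_{i\in H_1\cup H_2}x_i+\sum_{i\in H_1\cap H_2}x_i\le k+(k-2)=2(k-1),
\]
using $x_i\le 1$, $\sum x_i=k$, and $|H_1\cap H_2|\le k-2$, so $\sum_{i\in H_1}x_i>k-1$ forces $\sum_{i\in H_2}x_i<k-1$. It follows that
\[
\ehr(\mathscr{P}(M),t)=\ehr(\mathscr{P}(U_{k,n}),t)-\lambda\cdot g_{k,n}(t),
\]
where $g_{k,n}(t)=\#\{x\in\mathbb Z^n:\sum x_i=kt,\ 0\le x_i\le t,\ \sum_{i\in H}x_i>(k-1)t\}$ depends only on $k$ and $n$.

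First I would record the closed form: substituting $x_i=t-y_i$ on the coordinates of $H$ expresses $g_{k,n}(t)$ as a count of pairs of nonnegative integer vectors, of lengths $k$ and $n-k$, sharing a common coordinate-sum $s<t$, which gives
\[
g_{k,n}(t)=\sum_{s=0}^{t-1}\binom{s+k-1}{k-1}\binom{s+n-k-1}{n-k-1},
\]
a polynomial of degree $n-1$ with nonnegative coefficients and $g_{k,n}(0)=0$, $g_{k,n}(1)=1$. Since uniform matroid polytopes are Ehrhart positive (the author's theorem quoted above), every coefficient of $\ehr(\mathscr{P}(U_{k,n}),t)$ is positive, and therefore $\mathscr{P}(M)$ fails to be Ehrhart positive precisely when
\[
\lambda\cdot[t^{j}]\,g_{k,n}(t)>[t^{j}]\,\ehr(\mathscr{P}(U_{k,n}),t)
\]
for some exponent $j$; by Proposition \ref{coef} this can happen only for $d=n-1\ge 7$ and with $2\le j\le d-4$.

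Then I would feed in enough circuit-hyperplanes. A family of $k$-subsets pairwise meeting in at most $k-2$ points is precisely a partial Steiner system in which every $(k-1)$-subset of the ground set is covered at most once, so $\lambda$ can be taken as large as $\lambda_{\max}(k,n)=\Theta(n^{k-1})$ as $n\to\infty$ (elementary constructions already produce packings of this order, and the R\"odl nibble gives the asymptotically optimal constant $\tfrac1k\binom{n}{k-1}$). The heart of the argument is then a single asymptotic comparison: for each fixed $k\ge 3$ one must exhibit an admissible exponent $j=j(k)$ for which $\lambda_{\max}(k,n)\cdot[t^{j}]g_{k,n}(t)$ eventually exceeds $[t^{j}]\ehr(\mathscr{P}(U_{k,n}),t)$; both sides are explicit, the right-hand one from the classical formula for the Ehrhart polynomial of a hypersimplex and the left-hand one from the closed form above. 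Choosing $n$ beyond the resulting threshold and taking $M$ to be any sparse paving matroid of rank $k$ on $n$ elements whose circuit-hyperplanes realize such a packing (arranged, as one easily can, so that $M$ is connected — it has no loops or coloops because $2\le k\le n-2$, and a spread-out packing is not confined to a direct-sum structure), the displayed identity shows that $\mathscr{P}(M)$ is not Ehrhart positive. For the ranks $k>n/2$ one may instead pass to the dual, since $\mathscr{P}(M^{\ast})=\mathbf 1-\mathscr{P}(M)$ is unimodularly equivalent to $\mathscr{P}(M)$ and hence has the same Ehrhart polynomial, with connectivity preserved; but since $n$ is taken large for each fixed $k$, this is never actually needed.

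The step I expect to be the main obstacle is precisely this comparison of coefficients. The top-degree coefficients of $\ehr(\mathscr{P}(U_{k,n}),t)$ are governed by volumes and necessarily dominate the corresponding contribution $\lambda\cdot[t^{j}]g_{k,n}(t)$ (after all, $\mathscr{P}(M)\subseteq\mathscr{P}(U_{k,n})$ still has positive volume), so the winning exponent $j$ cannot lie near $d$; it must be located lower down, which requires a sufficiently precise grip on both $g_{k,n}$ and the hypersimplex Ehrhart polynomial to see that $\Theta(n^{k-1})$ copies of the former overtake the latter. The ranks $k=3$ and $k=4$ are the delicate cases, since there $\lambda_{\max}$ grows only like $n^{2}$ and $n^{3}$, leaving the least slack; one must also check that, for the $n$ involved, there genuinely remains an exponent in the admissible window $2\le j\le d-4$ of Proposition \ref{coef}. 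Once the comparison is carried out for each $k\ge 3$, the theorem follows.
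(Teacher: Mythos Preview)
Your framework coincides with the paper's: pass to sparse paving matroids, write
\[
\ehr(M,t)=\ehr(U_{k,n},t)-\lambda\,g_{k,n}(t),
\]
and then make $\lambda$ large. Your $g_{k,n}(t)$ is exactly the paper's $\ehr(T_{k,n},t-1)$ (Corollary~\ref{formusp}); your direct lattice-point derivation and closed form
\(
g_{k,n}(t)=\sum_{s=0}^{t-1}\binom{s+k-1}{k-1}\binom{s+n-k-1}{n-k-1}
\)
is a pleasant alternative to going through minimal matroids. Your $\lambda_{\max}=\Theta(n^{k-1})$ via partial Steiner systems is of the same order as the paper's Graham--Sloane bound $\tfrac{1}{n}\binom{n}{k}$ (Theorem~\ref{codes}), obtained through the equivalent language of constant-weight codes. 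So the architecture is not merely similar, it is the same.

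The genuine gap is that you stop exactly where the work begins. You yourself flag the coefficient comparison as ``the main obstacle'' and then leave it undone: you neither name an exponent $j$ nor show that $\lambda_{\max}(k,n)\,[t^{j}]g_{k,n}(t)$ eventually dominates $[t^{j}]\ehr(U_{k,n},t)$. Without that, there is no proof---only a plan. The paper's contribution is precisely to close this gap: it fixes $j=2$, proves the explicit lower bound $[t^{2}]\ehr(T_{k,n},t-1)\ge \tfrac{1}{k(n-1)}$ (Proposition~\ref{cotainf}, from the formula in Lemma~\ref{formutkn2}) and the explicit upper bound $[t^{2}]\ehr(U_{k,n},t)\le \binom{k+1}{2}H_{n-1}^{2}$ (Proposition~\ref{cotasup}, via the weighted Lah numbers from \cite{ferroni1}). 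With $\lambda\ge\tfrac{1}{n}\binom{n}{k}$ one then needs only
\[
\frac{1}{n}\binom{n}{k}\cdot\frac{1}{k(n-1)}\;>\;\binom{k+1}{2}H_{n-1}^{2},
\]
and for fixed $k\ge 3$ the left side grows like $n^{k-2}$ while the right grows like $(\log n)^{2}$, so it holds for all large $n$. This pinpoints $j=2$ as the ``winning'' exponent you were looking for and replaces your asymptotic hand-wave with inequalities that can be checked. Your closed form for $g_{k,n}$ would in fact let you reach $[t^{2}]g_{k,n}(t)\ge\tfrac{1}{(k-1)(n-1)}$ rather quickly; the step you are actually missing is the nontrivial upper bound on $[t^{2}]\ehr(U_{k,n},t)$.
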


We describe precise lower bounds for $n$ (depending on $k$) that show that for example if $n\geq 3589$ there is always a matroid of rank $3$ and cardinality $n$ that is not Ehrhart positive.

In Section \ref{sec:seven} we discuss the rank $2$ case and sketch how to prove that all sparse paving matroids of rank $2$ are Ehrhart positive. We finish the paper in Section \ref{sec:eight}, where we discuss some relevant facts and some extra examples.

\section{Matroids as a combinatorial structure}\label{sec:two}

In this section we review several notions on matroid theory that we will need in the sequel. Our primary sources are \cite{oxley} by Oxley and \cite{schrijver} by Schrijver. 

\subsection{Matroids}

We start by summarizing the basic terminology and concepts on matroid theory.

\begin{defi}
    A \emph{matroid} $\M$ is a pair $(E,\mathscr{B})$ where $E$ is finite set and $\mathscr{B}$ is family of subsets of $E$, i.e. $\mathscr{B}\subseteq 2^E$ that satisfies the following two conditions.
    \begin{enumerate}[(a)]
        \item $\mathscr{B}\neq \varnothing$.
        \item For each $B_1\neq B_2$ members of $\mathscr{B}$ and $a\in B_1\smallsetminus B_2$, there exists an element $b\in B_2\smallsetminus B_1$ such that $(B_1\smallsetminus \{a\})\cup \{b\}\in \mathscr{B}$.
    \end{enumerate}
\end{defi}

We usually refer to condition (b) as the \emph{basis-exchange-property} and call the members of $\mathscr{B}$ the \emph{bases} of $\M$. One of the most basic corollaries of this definition of matroids is that all the bases of a matroid have the same cardinality. 

One of the classical examples of matroids is that of \emph{uniform matroids}. Throughout this article we will denote by $\U_{k,n}$ the uniform matroid of rank $k$ and $n$ elements. Concretely, $\U_{k,n}$ is defined by $E=\{1,\ldots, n\}$ and $\mathscr{B} = \{ B\subseteq E: |B| = k\}$. 

A basic fact on matroid theory, which is motivated from notions coming from graph theory and linear algebra, is that matroids admit a notion of \emph{duality}. 

\begin{prop}
    Let $\M=(E,\mathscr{B})$ be a matroid. Then the family \[\mathscr{B}^* = \{E\smallsetminus B: B\in \mathscr{B}\}\] is the set of bases of a matroid on $E$. We denote this matroid by $\M^*$ and call it the \emph{dual} of $\M$.
\end{prop}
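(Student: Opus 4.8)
The plan is to check the two matroid axioms for the pair $(E,\mathscr{B}^*)$. Axiom (a) is immediate: $\mathscr{B}\neq\varnothing$ gives some $B\in\mathscr{B}$, whence $E\smallsetminus B\in\mathscr{B}^*$. The real work is axiom (b). Writing two members of $\mathscr{B}^*$ as $E\smallsetminus B_1$ and $E\smallsetminus B_2$ with $B_1,B_2\in\mathscr{B}$ and chasing complements, one sees that the basis-exchange property for $\mathscr{B}^*$ is equivalent to the following \emph{dual exchange} statement for $\mathscr{B}$: for all $B_1,B_2\in\mathscr{B}$ and every $x\in B_2\smallsetminus B_1$ there is $y\in B_1\smallsetminus B_2$ with $(B_1\smallsetminus\{y\})\cup\{x\}\in\mathscr{B}$. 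Indeed $a\in(E\smallsetminus B_1)\smallsetminus(E\smallsetminus B_2)$ means $a\in B_2\smallsetminus B_1$, the sought-for $b$ must lie in $(E\smallsetminus B_2)\cap B_1=B_1\smallsetminus B_2$, and $((E\smallsetminus B_1)\smallsetminus\{a\})\cup\{b\}=E\smallsetminus((B_1\smallsetminus\{b\})\cup\{a\})$. So the proof reduces entirely to establishing this dual exchange property of $M$.

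To prove it I would use only the ordinary basis-exchange property together with the fact recalled above that all bases of $M$ share a common cardinality $r$. Set $B':=B_2$; it contains $x$ and satisfies $B_1\cap B_2\subseteq B'$. While $B'\not\subseteq B_1\cup\{x\}$, choose $z\in B'\smallsetminus(B_1\cup\{x\})$; since then $z\in B'\smallsetminus B_1$ and $B'\neq B_1$ (as $x\in B'\smallsetminus B_1$), basis exchange produces $w\in B_1\smallsetminus B'$ with $(B'\smallsetminus\{z\})\cup\{w\}\in\mathscr{B}$, and we replace $B'$ by this basis. Three things persist through each step: $x$ remains in $B'$ (because $w\in B_1$ forces $w\neq x$); $|B'\cap B_1|$ strictly increases; and $B_1\cap B_2\subseteq B'$ is maintained, since we only ever delete an element $z\notin B_1$. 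As $x\in B'\smallsetminus B_1$ and $|B'|=r$ we always have $|B'\cap B_1|\leq r-1$, so the process halts at some basis $B'$ with $x\in B'$ and $B'\subseteq B_1\cup\{x\}$.

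For this terminal $B'$, equicardinality forces $B'=(B_1\smallsetminus\{y\})\cup\{x\}$ with $\{y\}=B_1\smallsetminus B'$, and the surviving invariant $B_1\cap B_2\subseteq B'=(B_1\smallsetminus\{y\})\cup\{x\}$ together with $y\in B_1$ shows $y\notin B_2$, i.e.\ $y\in B_1\smallsetminus B_2$. This is exactly the dual exchange statement, so by the reduction $\mathscr{B}^*$ obeys axiom (b) and $M^*=(E,\mathscr{B}^*)$ is a matroid. The one genuinely delicate point is producing the exchanged element $y$ on the correct side, namely $y\notin B_2$ and not merely $y\in B_1$: a direct appeal to basis exchange only guarantees that \emph{some} element of $B_1$ can be removed, with no control over whether it is shared with $B_2$. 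The transformation process is engineered around the invariant $B_1\cap B_2\subseteq B'$ precisely to force $y\notin B_2$ at the end; an alternative is to invoke the symmetric exchange property of bases, but that lemma is itself of comparable difficulty. Everything else is bookkeeping with complements and cardinalities.
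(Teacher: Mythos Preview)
Your proof is correct. The reduction to the dual exchange property is clean, and the iterative argument with the invariant $B_1\cap B_2\subseteq B'$ is exactly what is needed to guarantee the final $y$ lands in $B_1\smallsetminus B_2$ rather than merely in $B_1$; you identified and handled the one subtle point.

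However, there is nothing to compare against: the paper does not supply a proof of this proposition. It is stated as a standard structural fact about matroids, with Oxley's book cited at the start of the section for background of this kind. So your argument is not an alternative to the paper's proof but a self-contained verification of a result the paper takes for granted. Your approach is the classical one (essentially the proof one finds in Oxley), and it is carried out correctly.
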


It is not difficult to see that $(\M^*)^*=\M$, so that the operation described in the preceding result is in fact an involution, and the term ``dual'' is justified. As a basic example, it can be seen directly using the definitions that $\U_{k,n}^* = \U_{n-k,n}$.

There are several concepts about matroids that we will use and refer to repeatedly throughout this article.

\begin{defi}
    Let $\M=(E,\mathscr{B})$ be a matroid.
    \begin{itemize}
        \item If $I\subseteq E$ is contained in some $B\in \mathscr{B}$, we say that $I$ is \emph{independent}. When a subset of $E$ is not independent, we say it is \emph{dependent}.
        \item If $C\subseteq E$ is dependent but every proper subset of $C$ is independent, we say that $C$ is a \emph{circuit}.
        \item For every $A\subseteq E$ we define its \emph{rank}, $\rk(A)$, by:
            \[ \rk(A) = \max_{B\in \mathscr{B}} |A\cap B|.\]
        We say that the rank of $\M$ is just $\rk(E)$.
        \item If $F\subseteq E$ is a subset such that for every $e\notin F$ it is $\rk(F\cup \{e\}) > \rk(F)$, we say that $F$ is a \emph{flat}. 
        \item If $H$ is a flat and $\rk(H) = \rk(E)-1$, we say that $H$ is a \emph{hyperplane}.
    \end{itemize}
\end{defi}

All of the objects that we have just defined satisfy nice properties which in turn can be used to give alternative definitions of matroids; we refer to \cite[Chapter 1]{oxley} for a thorough account of many of such properties. One that we will use is the \emph{submodular inequality} for the rank function. This states that if $\M=(E,\mathscr{B})$ is a matroid, then
    \[ \rk(A_1) + \rk(A_2) \geq \rk(A_1\cup A_2) + \rk(A_1\cap A_2),\]
for every $A_1,A_2\subseteq E$. 

\begin{obs}\label{circuitcohyperplane}
    A basic result in matroid theory states that whenever $C$ is a circuit of a matroid $\M=(E,\mathscr{B})$, then $E\smallsetminus C$ is a hyperplane of $\M^*$. Conversely, the complement of a hyperplane $H$ of $\M$ is a circuit of $\M^*$. In other words, the circuits of $\M$ and the hyperplanes of $\M^*$ correspond bijectively. See also \cite[Proposition 2.1.6]{oxley}.
\end{obs}

Another important operation in matroid theory is the so-called \emph{direct sum} of matroids. If $\M_1=(E_1,\mathscr{B}_1)$ and $\M_2=(E_2,\mathscr{B}_2)$ are matroids on disjoint ground sets $E_1$ and $E_2$, their direct sum is defined as the matroid $\M_1\oplus\M_2$ that has ground set $E_1\sqcup E_2$ and set of bases $\mathscr{B} = \{B_1\sqcup B_2:B_1\in\mathscr{B}_1, B_2\in\mathscr{B}_2\}$.

A matroid $\M$ is said to be \emph{connected} if for every pair of distinct elements of the ground set, there exists a circuit containing both of them. This is equivalent for a matroid to be indecomposable, in the sense that it is not a direct sum of two or more matroids. Every matroid can be uniquely decomposed as a direct sum $\M = \M_1 \oplus \cdots \oplus \M_s$ of connected matroids, which are called the \emph{connected components} of $\M$.

\subsection{Relaxations and sparse paving matroids}

When one has a matroid $\M=(E,\mathscr{B})$ it is natural to ask under which conditions one can enlarge the set of bases $\mathscr{B}$ and obtain a new matroid. Assume that we want to add exactly one extra member to the set of bases of $\M$ and produce a new matroid (i.e. we want our new set of bases to satisfy the basis-exchange-property). The following result provides a scenario in which this is possible.\footnote{In fact, due to a result by Truemper in \cite{truemper} this is essentially the only possible way of enlarging the set of bases by one extra element.}

\begin{prop}
	Let $\M=(E,\mathscr{B})$ be a matroid that has a subset $H$ that is at the same time a circuit and a hyperplane. Let $\widetilde{\mathscr{B}}=\mathscr{B}\cup\{H\}$. Then $\widetilde{\mathscr{B}}$ is the set of bases of a matroid $\widetilde{\M}$ on $E$.
\end{prop} 

\begin{proof}
    See \cite[Proposition 1.5.14]{oxley}.
\end{proof}

A subset $H$ that is at the same time a circuit and a hyperplane will be referred to as a \emph{circuit-hyperplane}. The operation of declaring a circuit-hyperplane to be a basis is known in the literature by the name of \emph{relaxation}. Many famous matroids arise as a result of applying this operation to another matroid. For example the ``Non-Pappus matroid'' is the result of relaxing a circuit-hyperplane on the ``Pappus matroid'', and analogously the ``Non-Fano matroid'' can be obtained by a relaxation of the ``Fano matroid'' (for some other examples see \cite{oxley}).

\begin{ej}
    The graph in Figure \ref{wheel} is often called the \emph{wheel graph} of length $7$ and is denoted by $\mathsf{W}_7$.
    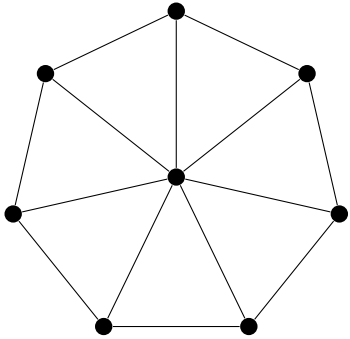
\begin{figure}[h]
	    \centering
        \begin{tikzpicture}  
		[scale=1.7,auto=center,every node/.style={circle, fill=black, inner sep=2.3pt}] 
		\tikzstyle{edges} = [thick];
		
        \graph  [empty nodes, clockwise, radius=1em,
        n=9, p=0.3] 
            { subgraph C_n [n=7,m=3,clockwise,radius=2.2cm,name=A]-- mid};
            \foreach \i [count=\xi from 2]  in {1,2,...,7}{
            \node[scale=0.1,fill=black, inner sep=2.3pt] at (A \i){\xi}; }
    \end{tikzpicture}\caption{The wheel graph $\mathsf{W}_7$}\label{wheel}
    \end{figure}
    One can construct a matroid from this graph by taking its set of edges as the ground set, with bases given by the spanning trees of the graph. The resulting matroid has cardinality $14$ and rank $7$. Notice that the graph $\mathsf{W}_7$ has a cycle of length $7$ (which in Figure \ref{wheel} consists of taking all the edges on the outer cycle); in the matroid this corresponds to a circuit of size $7$. Also, notice that this circuit is a flat of rank $6$, i.e., a hyperplane. In other words, it is a circuit-hyperplane and in particular it is possible to relax it. The matroid that one obtains with this relaxation is usually called the \emph{whirl matroid} $\mathsf{W}^7$.
\end{ej}

Let us describe a class of matroids that is intimately related with the circuit-hyperplane relaxation.

\begin{defi}
    Let $\M$ be a matroid of rank $k$. We say that $\M$ is \emph{paving} if every circuit of $\M$ has cardinality at least $k$. We say that $\M$ is \emph{sparse paving} if both $\M$ and its dual $\M^*$ are paving.
\end{defi}

Observe that if a matroid $\M$ of rank $k$ is paving, then its circuits must be all of size $k$ or $k+1$. Also, since according to Remark \ref{circuitcohyperplane} the hyperplanes of $\M$ are exactly the complements of circuits of $\M^*$, then if $\M^*$ is paving what we have is that all the hyperplanes of $\M$ have size exactly $k$ or $k-1$. 

If $\M$ is sparse paving, when one picks a circuit $C$ of length $k$, since its rank is $k-1$, it must be contained in a hyperplane $H$. Thus, $k=|C|\leq |H|\in \{k-1,k\}$. Hence, the only possibility is $|H|=k$, and therefore $C=H$. In particular $C$ is a hyperplane. Conversely, any hyperplane of size $k$ of a sparse paving matroid is a circuit.

\begin{lema}\label{sparsepavingdefi}
    A matroid $\M$ of rank $k$ is sparse paving if and only if every subset of cardinality $k$ is either a basis or a circuit-hyperplane.
\end{lema}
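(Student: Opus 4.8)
The plan is to prove both implications of the stated equivalence, working from the definitions of paving and sparse paving. Recall that a matroid $M$ of rank $k$ is sparse paving precisely when both $M$ and $M^*$ are paving, which by the discussion preceding the lemma means: every circuit of $M$ has size $k$ or $k+1$, and every hyperplane of $M$ has size $k-1$ or $k$.

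For the forward direction, suppose $M$ is sparse paving and let $A\subseteq E$ with $|A|=k$. Either $A$ is independent or it is dependent. If $A$ is independent, then since $|A|=k=\rk(E)$, $A$ is a maximal independent set, hence a basis. If $A$ is dependent, then $A$ contains a circuit $C$; since $M$ is paving, $|C|\geq k$, so $|C|=k=|A|$, forcing $C=A$. Thus $A$ itself is a circuit. As already observed in the paragraph before the lemma, a circuit of size $k$ in a sparse paving matroid is contained in a hyperplane $H$ with $k=|C|\leq|H|\leq k$, so $H=C=A$; hence $A$ is a circuit-hyperplane. This establishes that every $k$-subset is a basis or a circuit-hyperplane.

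For the converse, suppose every $k$-subset of $E$ is either a basis or a circuit-hyperplane. I must show $M$ is paving and $M^*$ is paving. To see $M$ is paving, take any circuit $C$ and suppose for contradiction $|C|\leq k-1$. Extend $C$ to a $k$-subset $A\supseteq C$; by hypothesis $A$ is a basis or a circuit-hyperplane, but in either case $A$ would have to be independent (a circuit-hyperplane is, in particular, a basis of the relaxed matroid, but more directly: if $A$ were a circuit then $A=C$ by minimality of circuits, contradicting $|C|<k=|A|$; and if $A$ is a basis it is independent). An independent set cannot contain the circuit $C$, a contradiction. Hence every circuit of $M$ has size $\geq k$, so $M$ is paving. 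For $M^*$, I use Remark \ref{circuitcohyperplane}: a circuit of $M^*$ is the complement $E\smallsetminus H$ of a hyperplane $H$ of $M$, so I must show every hyperplane $H$ of $M$ satisfies $|H|\geq k-1$, equivalently $\rk(H)=k-1$ already forces $|H|\geq k-1$ — wait, this needs an argument. Suppose $|H|\leq k-2$; then $H$ has rank at most $k-2<k-1$, contradicting that $H$ is a hyperplane. So in fact the bound $|H|\geq\rk(H)=k-1$ is automatic, but to match the paving condition for $M^*$ I actually need $|H|\leq k$ is not required; the paving condition on $M^*$ is that circuits of $M^*$ have size $\geq n-k$, i.e. $|E\smallsetminus H|\geq n-k$, i.e. $|H|\leq k$. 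So the real content is: every hyperplane of $M$ has size at most $k$. Suppose $H$ is a hyperplane with $|H|\geq k+1$. Since $\rk(H)=k-1$, $H$ contains no basis, so every $k$-subset $A\subseteq H$ is dependent, hence (by hypothesis) a circuit-hyperplane, in particular a circuit. But $|H|\geq k+1$ means $H$ contains at least two distinct $k$-subsets $A_1\neq A_2$, and $A_1\subsetneq$ — no, pick $A_1\subseteq H$ a $k$-subset and an element $x\in H\smallsetminus A_1$; then $A_1$ is a circuit, but also $A_1\cup\{x\}$ contains the circuit $A_1$ properly, which is fine, yet any $k$-subset of $A_1\cup\{x\}$ must be a circuit too, and a $k$-element subset of a $(k+1)$-set containing the $k$-element circuit $A_1$ would have to equal $A_1$ or share $k-1$ elements with it; in the latter case we get two distinct circuits $A_1,A_1'$ with $|A_1\cap A_1'|=k-1$ both contained in $A_1\cup\{x\}$, and the circuit elimination axiom applied to $A_1, A_1'$ and the element in $A_1\smallsetminus A_1'$ produces a circuit inside $(A_1\cup A_1')\smallsetminus\{y\}$ of size $\leq k-1$, contradicting that $M$ is paving. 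Hence $|H|\leq k$, so $M^*$ is paving, and $M$ is sparse paving.

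The routine bookkeeping above can be streamlined, but the \textbf{main obstacle} is the converse direction, specifically ruling out large hyperplanes: one must show that if every $k$-subset is a basis or circuit-hyperplane, no hyperplane can have more than $k$ elements. The cleanest route is the circuit-elimination argument just sketched — two distinct $k$-circuits sitting inside a common $(k+1)$-set force, via the circuit axioms, a circuit of size at most $k-1$, contradicting the paving property of $M$ we have already established. I would present the forward direction quickly and devote the bulk of the write-up to making this elimination step precise.
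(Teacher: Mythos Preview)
Your forward direction and the argument that $M$ is paving are correct and match the paper's proof. The problem is in the final step, where you rule out hyperplanes of size $\geq k+1$ via circuit elimination. Two issues: first, circuit elimination is applied to an element of the \emph{intersection} $A_1\cap A_1'$, not to the element $y\in A_1\smallsetminus A_1'$; second, even applied correctly to some $e\in A_1\cap A_1'$, the axiom only yields a circuit inside $(A_1\cup A_1')\smallsetminus\{e\}$, a set of size $k$, so you get a circuit of size $\leq k$, not $\leq k-1$. That is not a contradiction to paving; indeed, in $U_{k-1,k+1}$ every $k$-subset of the ground set is a circuit, so the configuration ``all $k$-subsets of a $(k+1)$-set are circuits'' is perfectly consistent with the circuit axioms alone.

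The fix is immediate once you use the \emph{hyperplane} part of the hypothesis rather than the circuit part: your $k$-subset $A_1\subseteq H$ is (by hypothesis) a circuit-hyperplane, hence a hyperplane, properly contained in the hyperplane $H$; but two hyperplanes cannot be properly nested (if $A_1$ is a flat of rank $k-1$, adding any $x\in H\smallsetminus A_1$ must increase the rank, yet $A_1\cup\{x\}\subseteq H$ still has rank $k-1$). Equivalently, and this is what the paper's ``analogous considerations'' points to, the hypothesis is self-dual: $k$-subsets of $M$ that are bases (resp.\ circuit-hyperplanes) correspond to $(n-k)$-subsets of $M^*$ that are bases (resp.\ circuit-hyperplanes), so the small-circuit argument you already gave, run in $M^*$, rules out large hyperplanes in $M$.
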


\begin{proof}
    Observe that if a matroid is such that every subset of cardinality $k$ is either a basis or a circuit-hyperplane, then it automatically is sparse paving. This is because the existence of a circuit of size less than $k$ is ruled out. Such a circuit can be completed to a set of cardinality $k$ which will fail to be a basis and a circuit-hyperplane. Analogous considerations avoid the possibility of the existence of a hyperplane of size greater than $k$.
    
    For the other implication, assume $\M$ is sparse paving and pick a subset $A$ of cardinality $k$ that is not a basis. Hence, we have that $A$ is dependent, and thus contains a circuit $C$. Since $\M$ is sparse paving we have that $k\leq |C| \leq |A| = k$, and since $C\subseteq A$, it follows that $C=A$ and hence $A$ is a circuit. Since $A$ has cardinality $k$, by the considerations prior to the statement of the Lemma, it follows that $A$ is a hyperplane.
\end{proof}

It follows from the above result that uniform matroids are sparse paving and, moreover, that every sparse paving matroid can be relaxed until obtaining a uniform matroid. This is because after relaxing one circuit-hyperplane, the remaining circuit-hyperplanes are still circuit-hyperplanes of the new matroid. 

\begin{obs}\label{predominance}
    In \cite{mayhew} Mayhew, Newman, Whittle and Welsh conjectured that \emph{asymptotically all} matroids are sparse paving. To be more precise, if we denote by $\operatorname{mat}(n)$ the number of \emph{labeled} matroids on $n$ elements (i.e. the number of matroids with ground set $\{1,\ldots,n\}$)\footnote{In \cite{mayhew} it is also conjectured that if we look at the number of matroids on $n$ elements \emph{up to isomorphism}, then the proportion of sparse paving matroids among them tends to 1 when $n\to\infty$ as well.} and $\operatorname{sp}(n)$ the number of sparse paving matroids among them, then
    
        \[\lim_{n\to\infty} \frac{\operatorname{sp}(n)}{\operatorname{mat}(n)} = 1.\]
    
    There is some evidence supporting that assertion. In fact, in \cite{pendavingh} Pendavingh and van der Pol proved that 
        \[\lim_{n\to\infty} \frac{\log\operatorname{sp}(n)}{\log \operatorname{mat}(n)} = 1.\]
\end{obs}

\begin{obs}
    If $n\geq 5$ and $2\leq k \leq n - 2$, then a sparse paving matroid of rank $k$ and cardinality $n$ is connected. To see this, a strategy consists of first determining explicitly all the disconnected paving matroids (see for instance \cite[Proposition 2.1]{oxley-paving}). Imposing that $\M$ and $\M^*$ are disconnected and paving yields that there is only one disconnected sparse paving matroid of rank and corank greater than $1$, namely $\U_{1,2}\oplus\U_{1,2}$, which is a matroid of rank $2$ and cardinality $4$.
\end{obs}

\section{Matroid polytopes, polymatroids and generalized permutohedra}\label{sec:three}

In this section we will review some definitions and fundamental facts about matroid polytopes, polymatroids and generalized permutohedra.

\subsection{Matroid polytopes} 

One of the many features of matroids is that they can be studied using tools from different areas of mathematics. In particular, starting from a matroid one can construct a polytope.

\begin{defi}
    Let $\M=(E,\mathscr{B})$ be a matroid on $E=\{1,\ldots,n\}$. For each $1\leq i \leq n$, let us call $e_i$ the $i$-th canonical vector in $\mathbb{R}^n$. For every $A\subseteq E$ define
        \[ e_A = \sum_{i \in A} e_i \in \mathbb{R}^n.\] 
    The \emph{base polytope} of $\M$ is defined as
        \[ \mathscr{P}(\M) = \text{convex hull} \{ e_B: B\in \mathscr{B}\} \subseteq \mathbb{R}^n.\]
\end{defi}

If instead of taking the convex hull of the indicator vectors of the bases of the matroid one considers all independent subsets, the resulting polytope $\mathscr{P}_I(\M)$ is customarily called the \emph{independence polytope} of $\M$. The study of matroids from the polyhedral point of view was initiated by Edmonds in \cite{edmonds}, and over the past half century has been a prominent area of research in diverse areas of mathematics such as optimization, discrete geometry or tropical geometry.

\begin{obs}
    Since all the bases of a matroid have cardinality equal to the rank of the matroid, $k = \rk(E)$, we see that the base polytope has all of its vertices lying on the hyperplane $\sum_{i=1}^n x_i = k$ in $\mathbb{R}^n$. Hence, its dimension is at most $n-1$. In \cite{feichtner} Feichtner and Sturmfels proved that
        \[ \dim \mathscr{P}(\M) = n - c(\M),\]
    where $c(\M)$ is the number of connected components of the matroid $\M$. In particular a matroid on $n$ elements is connected if and only if $\dim \mathscr{P}(\M) = n - 1$. Also, since $\mathscr{P}(\M_1\oplus \M_2) = \mathscr{P}(\M_1)\times \mathscr{P}(\M_2)$, and the Ehrhart polynomial of a product of polytopes is just the product of their Ehrhart polynomials, we can restrict ourselves only to connected matroids.
\end{obs}

\begin{obs}\label{rango}
    Notice that if $\M$ is a matroid of rank $k$ and cardinality $n$, then $\M^*$ is a matroid of rank $n-k$ and cardinality $n$. Moreover, the polytopes $\mathscr{P}(\M)$ and $\mathscr{P}(\M^*)$ are obtained one from the other via an involution of the form $x\mapsto (1,\ldots,1) - x$. In particular, if we denote by $\ehr(\M,t)$ the Ehrhart polynomial of $\mathscr{P}(\M)$ and analogously for $\M^*$, it is clear that
        \[ \ehr(\M, t) = \ehr(\M^*, t).\]
    
    Therefore, $\M$ is a counterexample to Conjecture \ref{deloera} if and only if $\M^*$ is a counterexample too. Hence, it suffices to look at matroids satisfying $k\leq n-k$ or, equivalently, $2k\leq n$.
\end{obs}

Observe that base polytopes of matroids are $0/1$-polytopes. This, together with a property about the edges characterizes them, due to a fundamental result by Gel'fand, Goresky, MacPherson and Serganova in \cite{ggms}.

\begin{teo}[\cite{ggms}]\label{ggms}
    Let $\mathscr{P}$ be a polytope in $\mathbb{R}^n$. Then $\mathscr{P}$ is the base polytope of a matroid if and only if the following two conditions hold:
    \begin{itemize}
        \item $\mathscr{P}$ is a $0/1$-polytope.
        \item All the edges of $\mathscr{P}$ are of the form $e_i-e_j$ for $i\neq j$.
    \end{itemize}
\end{teo}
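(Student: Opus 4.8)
This is the Gelfand--Goresky--MacPherson--Serganova characterization of matroid base polytopes (see also \cite{oxley}); I outline the proof I would write.

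\smallskip
\noindent\emph{Necessity.} If $\mathscr{P}=\mathscr{P}(M)$ it is a $0/1$-polytope by construction, so only the claim about edges needs an argument. Suppose $[e_{B_1},e_{B_2}]$ is an edge with $|B_1\triangle B_2|\geq 4$ and pick $a\in B_1\smallsetminus B_2$. By the symmetric basis-exchange property (Brualdi; see \cite{oxley}) there is $b\in B_2\smallsetminus B_1$ such that $B_3=(B_1\smallsetminus\{a\})\cup\{b\}$ and $B_4=(B_2\smallsetminus\{b\})\cup\{a\}$ are both bases. Then $e_{B_3}+e_{B_4}=e_{B_1}+e_{B_2}$, while a direct check gives $B_3,B_4\notin\{B_1,B_2\}$ and $B_3\neq B_4$. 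Hence the midpoint of $[e_{B_1},e_{B_2}]$ also lies on the segment $[e_{B_3},e_{B_4}]$; feeding this into the supporting hyperplane that cuts out the edge forces $e_{B_3}$ and $e_{B_4}$ onto that edge, contradicting that they are vertices distinct from $e_{B_1}$ and $e_{B_2}$. Therefore every edge has $|B_1\triangle B_2|=2$, i.e.\ $e_{B_2}-e_{B_1}=e_j-e_i$.

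\smallskip
\noindent\emph{Sufficiency.} Let $\mathscr{P}$ be a $0/1$-polytope all of whose edges have the form $e_i-e_j$. The $1$-skeleton of a polytope is connected and $x\mapsto\sum_i x_i$ is constant along every edge (each $e_i-e_j$ has coordinate sum $0$), so this affine function is constant, say equal to $k$, on the vertices and hence on $\mathscr{P}$; thus every vertex is $e_B$ for a unique $k$-subset $B$ of $E=\{1,\dots,n\}$. Let $\mathscr{B}$ be the set of all such $B$ and $M=(E,\mathscr{B})$; since $\mathscr{B}\neq\varnothing$ is clear and $\mathscr{P}$ is by construction the convex hull of $\{e_B:B\in\mathscr{B}\}$, it suffices to check the basis-exchange axiom for $\mathscr{B}$. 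Fix $B_1,B_2\in\mathscr{B}$ and $a\in B_1\smallsetminus B_2$. The vector $d=e_{B_2}-e_{B_1}$ lies in the tangent cone of $\mathscr{P}$ at the vertex $e_{B_1}$ (because $e_{B_1}+t\,d\in\mathscr{P}$ for small $t>0$); since $e_{B_1}$ is a vertex this cone is pointed, hence equal to the nonnegative hull of its extreme rays, and those rays are exactly the directions of the edges of $\mathscr{P}$ incident to $e_{B_1}$. By hypothesis and $0/1$-ness, each such direction equals $e_q-e_p$ with $p\in B_1$, $q\notin B_1$ and $(B_1\smallsetminus\{p\})\cup\{q\}\in\mathscr{B}$. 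Writing $d=\sum_s\lambda_s(e_{q_s}-e_{p_s})$ with all $\lambda_s>0$ and reading off the $a$-th coordinate (which equals $-1$ in $d$) shows some term has $p_s=a$; for that term the $q_s$-th coordinate of $d$ is positive, hence equals $+1$, so $q_s\in B_2\smallsetminus B_1$. Then $b:=q_s$ witnesses the exchange axiom and $M$ is a matroid with $\mathscr{P}(M)=\mathscr{P}$.

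\smallskip
The bookkeeping parts---that on a $0/1$-polytope an edge parallel to $e_i-e_j$ is literally $\pm(e_i-e_j)$, and the midpoint/supporting-hyperplane computation---are routine. The step carrying the real content is the one in \emph{Sufficiency}: the hypothesis constrains only the edges, yet we must control \emph{all} feasible directions leaving a vertex, which is precisely what the tangent-cone description supplies. I expect the main obstacle to be making this rigorous, in particular the facts that the tangent cone at a vertex of a polytope is pointed and generated by the incident edge directions, together with the coordinate comparison that then extracts the exchange element $b$.
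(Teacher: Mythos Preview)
The paper does not give a proof of this theorem; it simply records the statement and attributes it to the original Gelfand--Goresky--MacPherson--Serganova paper \cite{ggms}. So there is no ``paper's own proof'' to compare against.

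Your outline is a correct and standard proof of the result. The necessity direction via symmetric basis exchange together with the midpoint/supporting-hyperplane contradiction is exactly the usual argument. In the sufficiency direction, the one step that deserves a word of justification is your claim that for the index $s$ with $p_s=a$ the coordinate $d_{q_s}$ is positive: this holds because every $p_t$ lies in $B_1$ while $q_s\notin B_1$, so $(e_{p_t})_{q_s}=0$ for all $t$ and hence $d_{q_s}=\sum_{t:\,q_t=q_s}\lambda_t\geq\lambda_s>0$. With that check your tangent-cone argument goes through, and the facts you flag about pointed tangent cones being generated by incident edge directions are standard polytope theory.
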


\subsection{Generalized permutohedra and polymatroids}

In \cite{postnikov} Postnikov studied certain polytopes obtained as deformations of an usual permutohedron while preserving the edges directions. Since all the edges of an usual permutohedron are parallel to some vector of the form $e_i-e_j$, and conversely, any polytope having edges of that form can be obtained via deforming an usual permutohedron, it is customary to present Postnikov's polytopes in the following way. 

\begin{defi}
    A \emph{generalized permutohedron} is a polytope $\mathscr{P}\subseteq \mathbb{R}^n$ such that all of its edges are parallel to $e_i-e_j$ for $i\neq j$.
\end{defi}

Observe that Theorem \ref{ggms} can be restated in a simpler way: a matroid polytope is just a generalized permutohedron with vertices with $0/1$ coordinates. Soon, it was realised that generalized permutohedra were essentially equivalent to the class of base polytopes of \emph{polymatroids}, a combinatorial structure introduced by Edmonds and Rota four decades before \cite{edmonds-rota}. Let us review one of their many equivalent definitions.

\begin{defi}
    A \emph{polymatroid} $\mathsf{P}=(E,f)$ consists of a finite set $E$ and a map $f:2^E\to \mathbb{R}$ with the following properties:
    \begin{itemize}
        \item $f(\varnothing) = 0$.
        \item If $A_1\subseteq A_2\subseteq E$, then $f(A_1)\leq f(A_2)$.
        \item If $A_1,A_2\subseteq E$, then
            \[ f(A_1)+f(A_2) \geq f(A_1\cup A_2) + f(A_1\cap A_2).\]
    \end{itemize}
\end{defi}

\begin{obs}
    Observe that if we further require that $f(A) \leq |A|$ and that the values that $f$ assume are integers, what we end up obtaining is in fact a matroid having $f$ as rank function. This is why polymatroids are a generalization of matroids indeed.
\end{obs}

Within the framework of combinatorial optimization there are two important polytopes associated with a polymatroid; the study of these polyhedra was pioneered by Edmonds in \cite{edmonds} as well. It is possible to generalize the notions corresponding to the base polytope of a matroid and the independence polytope of a matroid\footnote{We want to emphasize that we are using the name ``base polytope of a polymatroid'' to avoid confusions, given that in combinatorial optimization it is customary to use the term ``polymatroid'' to speak about what we called the independence polytope of a polymatroid.}.

\begin{defi}
    If $\mathsf{P}=(E,f)$ is a polymatroid on $E=\{1,\ldots,n\}$, then its \emph{base polytope} is defined as
         \[ \mathscr{P}(\mathsf{P}) = \left\{x\in \mathbb{R}^n : \sum_{i=1}^n x_i = f(E), \sum_{i\in S} x_i \leq f(S) \text{ for all } S\subseteq E\right\},\]
    and its \emph{independence polytope} is defined as
    \[ \mathscr{P}_I(\mathsf{P}) = \left\{x\in \mathbb{R}_{\geq 0}^n : \sum_{i\in S} x_i \leq f(S) \text{ for all } S\subseteq E\right\}.\]
\end{defi}

Both objects are indeed polytopes, as they are bounded. This is because the base polytope has each coordinate bounded by
    \[ 0\leq f(E)-f(E\smallsetminus\{i\}) \leq x_i \leq f(\{i\}),\]
and the independence polytope by
    \[ 0 \leq x_i \leq f(\{i\}).\]

Moreover, we have that up to a translation to the closed positive orthant, generalized permutohedra and base polytopes of polymatroids coincide.

\begin{teo}
    A polytope $\mathscr{P}\subseteq \mathbb{R}^n$ is the base polytope of a polymatroid if and only if it is a generalized permutohedron and lies in the closed positive orthant.
\end{teo}

A proof of this result can be found in \cite{derksenfink} by Derksen and Fink, in the context of the study of an even broader class of polyhedra called ``megamatroids''. For alternative proofs we refer to \cite{postnikov-reiner-williams} or \cite{castilloliupolymatroids}, where other characterizations of these polytopes are discussed in full detail.

It will not be necessary to deal with generalized permutohedra or polymatroids in their full generality. All of our construction relies on the geometric counterpart of the combinatorics of matroids that we developed in Section \ref{sec:two}.

\section{Ehrhart theory and minimal matroids}\label{sec:four}

The goal of this section is to review the Ehrhart theory of uniform matroids, give a geometric point of view of the operation of circuit-hyperplane relaxation as was done in \cite{ferroni2}, and link everything with the Ehrhart theory of sparse paving matroids.

\subsection{Uniform matroids}

The base polytope of the uniform matroid $\U_{k,n}$ is known in the literature as the \emph{hypersimplex} $\Delta_{k,n}$. In \cite{katzman} Katzman derived an explicit formula for the Ehrhart polynomial of the hypersimplex.

\begin{teo}[\cite{katzman}]\label{katzman}
    The Ehrhart polynomial of the base polytope of the uniform matroid $\U_{k,n}$ is given by
		\begin{equation}\label{formula} 
			\ehr(\U_{k,n},t) = \sum_{j=0}^{k-1} (-1)^j \binom{n}{j} \binom{(k-j)t+n-1-j}{n-1}.
		\end{equation}
\end{teo}

In \cite{deloera} De Loera, Haws and K\"oppe used this formula to prove that all uniform matroids of rank $2$ do have positive Ehrhart coefficients. In \cite{ferroni1} the author provided an alternative proof of Theorem \ref{katzman} and found an explicit combinatorial formula for each of the Ehrhart coefficients of hypersimplices. To state properly such formula, we recall that the \emph{Eulerian numbers} $A(n,k)$ denote the number of permutations on $n$ elements that have exactly $k$ descents. Also, the \emph{Lah number} $L(n,m)$ is defined as the number of ways of partitioning the set $\{1,\ldots,n\}$ into exactly $m$ linearly ordered blocks. If $\pi$ is a partition of $\{1,\ldots,n\}$ into $m$ linearly ordered blocks, we write that $b\in \pi$ when $b$ is a block. So, for example $(2,3)\in \{(2,3),(1)\}$. We define the \emph{weight of $\pi$} by the following formula:
		\[ w(\pi) := \sum_{b\in\pi} w(b),\]
	where $w(b)$ is the number of elements in $b$ that are smaller (as positive integers) than the first element in $b$.

\begin{defi}
	The \emph{weighted Lah number} $W(\ell,n,m)$ is the number of partitions of weight $\ell$ of $\{1,\ldots,n\}$ into exactly $m$ linearly ordered blocks. 
\end{defi}

Using these and the Eulerian numbers we can express all the Ehrhart coefficients of the base polytope of a uniform matroid as follows.

\begin{teo}[\cite{ferroni1}]\label{formula-ehrhart-wlah}
	For every $1\leq k\leq n - 1$ and $0\leq m\leq n-1$, we have that
	\[ [t^m]\ehr(\U_{k,n},t) = \frac{1}{(n-1)!}\sum_{\ell=0}^{k-1} W(\ell,n,m+1) A(m,k-\ell-1).\]
\end{teo}

This formula will be useful to provide an explicit upper bound for some of the Ehrhart coefficients of $\U_{k,n}$.

\subsection{Minimal matroids}

In \cite{ferroni2} Ferroni studied the base polytopes of the so-called ``minimal matroids''. 

\begin{defi}
    The \emph{minimal matroid} $\T_{k,n}$ is defined as the graphic matroid arising from a cycle of length $k+1$ having one of its edges replaced by $n-k$ parallel copies.
\end{defi}

A basic fact is that $\T_{k,n}$ is a connected matroid of cardinality $n$ and rank $k$. Moreover, it can be proved that it has exactly $k(n-k)+1$ bases. These matroids $\T_{k,n}$ play a key role in our construction. They take their name by a result proved independently by Dinolt and Murty in \cite{dinolt} and \cite{murty} respectively, showing that $\T_{k,n}$ is the unique connected matroid (up to isomorphism) having cardinality $n$ and rank $k$ achieving the minimal possible number of bases. 

\begin{ej}
    In Figure \ref{figura} we can see the graph corresponding to the matroid $\T_{5,8}$. 
    
    \begin{figure}[h]
	    \centering
		\begin{tikzpicture}  
		[scale=2.0,auto=center,every node/.style={circle, fill=black, inner sep=2.3pt}] 
		\tikzstyle{edges} = [thick];
		
		\node (a1) at (-0.5,0.86) {};  
		\node (a2) at (-1,0)  {}; 
		\node (a3) at (-0.5,-0.86)  {};  
		\node (a4) at (0.5,-0.86) {};  
		\node (a5) at (1,0)  {};  
		\node (a6) at (0.5,0.86)  {};    
		
		\draw[edges] (a1) -- (a2); 
		\draw[edges] (a2) -- (a3);  
		\draw[edges] (a3) -- (a4);  
		\draw[edges] (a4) -- (a5);  
		\draw[edges] (a5) -- (a6);  
		\draw[edges] (a6) edge[color=red] (a1);
		\draw[edges] (a6) edge[bend right=20,color=red] (a1);
		\draw[edges] (a6) edge[bend right=-20,color=red] (a1);
		\end{tikzpicture} \caption{$\T_{5,8}$}\label{figura}
	\end{figure}
\end{ej}

One of the main results in \cite{ferroni2} is given by the following geometric interpretation of the notion of circuit-hyperplane relaxation. 

\begin{teo}[\cite{ferroni2}]
    Let $\M$ be a connected matroid of rank $k$ and cardinality $n$ with a circuit-hyperplane $H$ and let $\widetilde{\M}$ be the relaxed matroid. Then the polytope $\widetilde{\mathscr{P}}$ of $\widetilde{\M}$ is obtained by stacking the polytope of the minimal matroid $\T_{k,n}$ on a facet of $\mathscr{P}$. 
\end{teo}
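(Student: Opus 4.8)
The plan is to realize the relaxed polytope concretely and then read off its facial structure from the combinatorics of $H$. Since $\widetilde{\mathscr B}=\mathscr B\cup\{H\}$ and, for any matroid, the vertices of the basis polytope are precisely the indicator vectors $e_B$ of the bases, we have $\widetilde{\mathscr P}=\operatorname{conv}\bigl(\mathscr P(M)\cup\{e_H\}\bigr)$. First I would record two facts about $H$. Because $H$ is simultaneously a circuit and a hyperplane, $\rk(H)=|H|-1$ and $\rk(H)=k-1$, so $|H|=k$; and $e_H\notin\mathscr P(M)$, because $\mathscr P(M)$ is a $0/1$-polytope (so its only $0/1$-points are its vertices, namely the $e_B$ with $B\in\mathscr B$) and $H\notin\mathscr B$. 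After relabelling the ground set we may assume $H=\{1,\dots,k\}$.

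The core of the argument is to show that $e_H$ lies beyond exactly one facet of $\mathscr P(M)$, namely $F_H:=\mathscr P(M)\cap\{\sum_{i\in H}x_i=k-1\}$, and beneath or on all the others. For this I would use the standard description $\mathscr P(M)=\{x:\sum_{i\in E}x_i=k,\ x_i\ge 0\ \forall i,\ \sum_{i\in F}x_i\le\rk(F)\ \forall\text{ flats }F\}$. The point $e_H$ satisfies all the coordinate inequalities and the equality $\sum_{i\in E}x_i=k$, and it violates only the inequality attached to the flat $F=H$ (its value there is $k>k-1$): indeed, the only flats containing the hyperplane $H$ are $H$ and $E$, so for every other flat $F$ the set $F\cap H$ is a \emph{proper} subset of the circuit $H$, hence independent, whence $\sum_{i\in F}(e_H)_i=|F\cap H|=\rk(F\cap H)\le\rk(F)$. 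Thus $e_H$ violates a single inequality of the system; since the remaining inequalities are still satisfied at a point outside $\mathscr P(M)$, that inequality is irredundant, so $F_H$ is a genuine facet (alternatively, $H$ is a flacet: $M|_H=U_{k-1,k}$ and $M/H=U_{1,n-k}$ are connected). By the beneath--beyond description of $\operatorname{conv}(\mathscr P\cup\{v\})$ when $v$ is beyond a unique facet, $\widetilde{\mathscr P}=\mathscr P(M)\cup\operatorname{conv}(F_H\cup\{e_H\})$ with $\mathscr P(M)\cap\operatorname{conv}(F_H\cup\{e_H\})=F_H$; that is, $\widetilde{\mathscr P}$ is $\mathscr P(M)$ with a pyramid over $F_H$ stacked on.

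It remains to identify that pyramid with $\mathscr P(T_{k,n})$. Here I would describe the bases of $M$ meeting $H$ in $k-1$ elements: if $A\subseteq H$ has $|A|=k-1$, then $A$ is independent (a proper subset of the circuit $H$) and $\operatorname{cl}(A)=H$ — the missing point of $H$ lies in $\operatorname{cl}(A)$ since $H$ is dependent, and a rank-$(k-1)$ flat containing the hyperplane $H$ must equal $H$ — so $A\cup\{\ell\}$ is a basis for every $\ell\in E\smallsetminus H$, and these are precisely the bases $B$ with $|B\cap H|=k-1$. Consequently the vertex set of $\operatorname{conv}(F_H\cup\{e_H\})$ is $\{e_H\}\cup\{e_{(H\smallsetminus\{j\})\cup\{\ell\}}:j\in H,\ \ell\in E\smallsetminus H\}$, which, under the relabelling $H=\{1,\dots,k\}$, is exactly the set of indicator vectors of the bases of the rank-$k$ matroid on $\{1,\dots,n\}$ obtained from $U_{k,k+1}$ by placing $\{k+1,\dots,n\}$ in a single parallel class. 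That matroid is connected, has $n$ elements and $k(n-k)+1$ bases — the minimum for a connected matroid of rank $k$ on $n$ elements — so it is $T_{k,n}$. Hence $\operatorname{conv}(F_H\cup\{e_H\})=\mathscr P(T_{k,n})$ up to the coordinate permutation, and $\widetilde{\mathscr P}$ is $\mathscr P(M)$ with $\mathscr P(T_{k,n})$ stacked on the facet $F_H$.

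The main obstacle is the middle step: proving that $e_H$ is beyond a \emph{unique} facet of $\mathscr P(M)$ and that this face is honestly a facet. This is exactly where both roles of $H$ are indispensable — being a circuit makes every proper subset independent, which keeps $|F\cap H|\le\rk(F)$ under control for all other flats, while being a hyperplane makes $H$ a coatom of the lattice of flats, so no intermediate flat can interfere and the relevant rank-$(k-1)$ flat is forced to be $H$ itself. The beneath--beyond step and the recognition of the pyramid's vertex set as $\mathscr B(T_{k,n})$ are then routine given the known structure of the minimal matroid.
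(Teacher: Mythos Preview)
The paper does not prove this theorem itself; it simply records it as Theorem~5.4 of \cite{ferroni2}. Your argument is a correct, self-contained proof. You show that $e_H$ lies beyond exactly one facet of $\mathscr{P}(M)$ (the one cut out by $\sum_{i\in H}x_i\le\rk(H)=k-1$), invoke beneath--beyond to write $\widetilde{\mathscr{P}}=\mathscr{P}(M)\cup\operatorname{conv}(F_H\cup\{e_H\})$ with intersection $F_H$, and then identify the cap with $\mathscr{P}(T_{k,n})$ by listing its vertex set. Each step is sound. The crucial observation---that for every flat $F\notin\{H,E\}$ the set $F\cap H$ is a \emph{proper} subset of the circuit $H$, hence independent, so that $|F\cap H|=\rk(F\cap H)\le\rk(F)$---is exactly what pins down the uniqueness of the violated inequality, and either of your two justifications (irredundancy, or the flacet criterion via $M|_H\cong U_{k-1,k}$ and $M/H\cong U_{1,n-k}$ both connected) confirms that $F_H$ is a genuine facet. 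The identification of the cap is also correct: the bases $B$ of $M$ with $|B\cap H|=k-1$ are precisely the sets $(H\smallsetminus\{j\})\cup\{\ell\}$ with $j\in H$ and $\ell\notin H$, and together with $H$ these $k(n-k)+1$ sets are exactly the bases of the minimal connected matroid $T_{k,n}$ up to relabelling. This is the natural geometric argument, and it matches the approach taken in \cite{ferroni2}.
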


In other words, relaxing a circuit-hyperplane consists of gluing two polytopes, one of which corresponds to the base polytope of a minimal matroid.

\subsection{Ehrhart polynomials of sparse paving matroids} 

Given that the relaxation has a nice counterpart at the level of polytopes, it is reasonable to expect a nice relation between the Ehrhart polynomial of the base polytope of a matroid and that of one of its relaxations. Recall that $\ehr(\M,t)$ denotes the Ehrhart polynomial of the base polytope of the matroid $\M$. 

\begin{teo}[\cite{ferroni2}]\label{ehrhartrelax}
    Let $\M$ be a matroid of rank $k$ and cardinality $n$ with a circuit-hyperplane $H$. Denote by $\widetilde{\M}$ the matroid obtained after relaxing $H$. The following equality holds:
		\[ \ehr(\widetilde{\M},t) = \ehr(\M,t)+\ehr(\T_{k,n},t-1).\]
\end{teo}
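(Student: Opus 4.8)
The key geometric fact is the one just quoted: relaxing the circuit-hyperplane $H$ of $M$ produces the polytope $\widetilde{\mathscr{P}}$ by gluing a copy of $\mathscr{P}(T_{k,n})$ onto a facet $\mathscr{F}$ of $\mathscr{P}(M)$. My plan is to exploit the additivity of lattice-point counting under this decomposition. Concretely, $\widetilde{\mathscr{P}} = \mathscr{P}(M) \cup Q$ where $Q$ is a lattice polytope projectively equivalent to $\mathscr{P}(T_{k,n})$ and $\mathscr{P}(M) \cap Q = \mathscr{F}$. For any lattice polytope decomposed as a union of two lattice polytopes meeting along a common facet, inclusion–exclusion for Ehrhart functions gives
\[
\ehr(\widetilde{\mathscr{P}}, t) = \ehr(\mathscr{P}(M), t) + \ehr(Q, t) - \ehr(\mathscr{F}, t).
\]
So the whole statement reduces to identifying $\ehr(Q,t) - \ehr(\mathscr{F},t)$ with $\ehr(T_{k,n}, t-1)$.

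First I would pin down the combinatorial structure of $Q$ and $\mathscr{F}$. The facet $\mathscr{F}$ of $\mathscr{P}(M)$ onto which we glue is the one supported by the hyperplane $\{x_H := \sum_{i\in H} x_i = k-1\}$ (since $H$ is a hyperplane, $\rk(H) = k-1$, so every basis $B$ of $M$ has $|B\cap H|\le k-1$, with equality cutting out a facet); after relaxation the new vertex $e_H$ lies on the other side, $x_H = k$. Thus $Q$ is the convex hull of $\mathscr{F}$ together with the single new vertex $e_H$, i.e. a pyramid-like body over $\mathscr{F}$, and this $Q$ is exactly (an affine-unimodular image of) $\mathscr{P}(T_{k,n})$: indeed the minimal matroid $T_{k,n}$ has as its bases the "$H$-set plus one external element" configuration, so $\mathscr{P}(T_{k,n})$ is the convex hull of $e_H$ together with the vectors $e_{(H\setminus\{a\})\cup\{b\}}$, and these latter vectors span precisely the facet one glues along. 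I would make this correspondence explicit (or simply cite that it is what Theorem 5.4 of \cite{ferroni2} establishes) so that $Q$ and $\mathscr{P}(T_{k,n})$ have equal Ehrhart polynomials and the facet $\mathscr{F}$ of $\widetilde{\mathscr{P}}$ identifies with a facet of $\mathscr{P}(T_{k,n})$.

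Next, the arithmetic step. I claim $\ehr(\mathscr{P}(T_{k,n}), t) - \ehr(\mathscr{F}, t) = \ehr(T_{k,n}, t-1)$, where on the right the argument shift comes from the standard reciprocity/"open polytope" phenomenon: $\mathscr{P}(T_{k,n}) \setminus \mathscr{F}$ is the half-open polytope obtained by removing one facet, and counting its lattice points in the $t$-th dilate equals counting lattice points in the full $(t-1)$-st dilate of $\mathscr{P}(T_{k,n})$ provided $\mathscr{P}(T_{k,n})$ is a \emph{pyramid} with $\mathscr{F}$ as a facet through which every other facet passes — more precisely, one uses that $\mathscr{P}(T_{k,n})$ is a lattice pyramid over some base with apex \emph{not} on $\mathscr{F}$... but that is not quite the configuration. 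The cleaner route: the lattice points of $t\widetilde{\mathscr{P}}$ split as those in $t\mathscr{P}(M)$ plus those in $tQ$ strictly on the far side of the gluing hyperplane $x_H = k-1$. That second count is $\ehr(Q,t) - \#(tQ \cap \{x_H \le k-1\})$, and since $Q$ lies entirely in $\{x_H \ge k-1\}$ with $Q\cap\{x_H=k-1\}=\mathscr{F}$, it equals $\ehr(Q,t) - \ehr(\mathscr{F},t)$; the identification of this with $\ehr(T_{k,n}, t-1)$ is then a direct computation using the explicit formula for $\ehr(T_{k,n},t)$ — I would invoke whatever closed form for $\ehr(T_{k,n},t)$ is available from \cite{ferroni2} (the minimal matroids have a known, simple Ehrhart polynomial) and check the shifted identity coefficient-by-coefficient, or more slickly observe that $\ehr(T_{k,n},t)-\ehr(\mathscr{F},t)$ already matches $\ehr(T_{k,n},t-1)$ because $\mathscr{F}$ is (a translate of) the base of the pyramid defining $T_{k,n}$.

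**Main obstacle.** The delicate point is the shift $t \mapsto t-1$, i.e. correctly accounting for which boundary lattice points of the glued piece are "already counted" in $\mathscr{P}(M)$. One must argue that the overlap is exactly the facet $\mathscr{F}$ as a \emph{closed} set of lattice points, and that the remaining lattice points of the dilated pyramid $Q$ — those with $x_H \ge k$ — are in bijection with the lattice points of $(t-1)\mathscr{P}(T_{k,n})$. Getting this bijection right requires knowing that $\mathscr{P}(T_{k,n})$ genuinely has the structure of a lattice pyramid with apex $e_H$ over the base $\mathscr{F}$ (equivalently, that every vertex of $\mathscr{P}(T_{k,n})$ other than $e_H$ lies on $\mathscr{F}$), which is exactly the content of the minimal-matroid description; once that is in hand, the "slice at height $j$ is a copy of the $(1 - j/\text{max})$-scaled base" picture makes the shift transparent and the identity is immediate. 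So the real work is the careful statement and citation of the pyramid structure of $\mathscr{P}(T_{k,n})$ from \cite{ferroni2}; everything else is bookkeeping with inclusion–exclusion for Ehrhart polynomials.
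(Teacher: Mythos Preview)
The paper does not give its own proof of this theorem; it is quoted as a result from \cite{ferroni2}. Your approach is correct and is essentially the argument used there: inclusion--exclusion on the stacking decomposition $\widetilde{\mathscr P}=\mathscr P(M)\cup Q$ with $Q\cap\mathscr P(M)=\mathscr F$ reduces the claim to the identity $\ehr(T_{k,n},t)-\ehr(\mathscr F,t)=\ehr(T_{k,n},t-1)$, and this follows from the fact that $\mathscr P(T_{k,n})$ is a lattice pyramid with apex $e_H$ over the facet $\mathscr F$, so that the $t$-th dilate slices (along the levels of the functional $x_H$) into lattice translates of $0\cdot\mathscr F,\,1\cdot\mathscr F,\ldots,\,t\cdot\mathscr F$, giving $\ehr(T_{k,n},t)=\sum_{i=0}^{t}\ehr(\mathscr F,i)$ and hence the telescoping identity.

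Two small points worth tightening in your write-up. First, your hesitation in the middle (``that is not quite the configuration'') is unwarranted: every vertex of $\mathscr P(T_{k,n})$ other than $e_H$ has $x_H=k-1$, so the polytope \emph{is} a pyramid over $\mathscr F$ with apex $e_H$; there is no need to fall back on a coefficient-by-coefficient check with the explicit formula. Second, for the slicing to give a genuine lattice-point bijection you need $e_H$ to sit at lattice height exactly $1$ above $\operatorname{aff}(\mathscr F)$ in the ambient lattice; this holds because $x_H$ is an integer functional taking the values $k$ at $e_H$ and $k-1$ on $\mathscr F$, and the translate $j\,e_H$ carrying the $j$-th slice to $(t-j)\mathscr F$ is itself a lattice vector. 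Once those two observations are stated explicitly, your argument is complete.
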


We want to emphasize that the second summand on the right is evaluated in $t-1$. In \cite[Theorem 1.6]{ferroni2} the following explicit formula for $\ehr(\T_{k,n}, -)$ is derived.

\begin{prop}[\cite{ferroni2}]\label{formulasa}
    The Ehrhart polynomial of the base polytope of the minimal matroid $\T_{k,n}$ is given by:
		\[\ehr(\T_{k,n},t)= \frac{1}{\binom{n-1}{k-1}} \binom{t+n-k}{n-k} \sum_{j=0}^{k-1}\binom{n-k-1+j}{j}\binom{t+j}{j}.\]
\end{prop}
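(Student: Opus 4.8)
The plan is to compute $\ehr(T_{k,n},t)$ directly from the very rigid geometry of $\mathscr{P}(T_{k,n})$ and then to match the outcome with the asserted product (one may assume $n>k$, the case $n=k$ giving a point). Recall the combinatorial picture of the minimal matroid: on the ground set $\{1,\dots,n\}$ the set $P=\{k+1,\dots,n\}$ is a parallel class, and the bases are $\{1,\dots,k\}$ together with all sets $(\{1,\dots,k\}\smallsetminus\{i\})\cup\{j\}$ with $i\le k$, $j\in P$. Hence the vertices of $\mathscr{P}(T_{k,n})$ are $v_0=e_{\{1,\dots,k\}}$ and $v_{i,j}=e_{\{1,\dots,k\}}-e_i+e_j$. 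The key observation is that the integral linear functional $\ell(x)=x_{k+1}+\dots+x_n$ is primitive, equals $0$ at $v_0$ and $1$ at every $v_{i,j}$, and therefore lies between $0$ and $1$ on all of $\mathscr{P}(T_{k,n})$. Since $\ell(v_0)\neq\ell(v_{i,j})$, the polytope $\mathscr{P}(T_{k,n})$ is the pyramid with apex $v_0$ over the facet $F=\mathscr{P}(T_{k,n})\cap\{\ell=1\}=\mathrm{conv}\{v_{i,j}\}$, and because $\ell$ is primitive and vanishes at $v_0$ this pyramid has lattice height one.

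The next step is to identify $F$. The affine unimodular map $x\mapsto\big((1-x_1,\dots,1-x_k),\,(x_{k+1},\dots,x_n)\big)$ carries $F$ onto the product of simplices $\Delta_{k-1}\times\Delta_{n-k-1}$, sending each $v_{i,j}$ to a vertex of the product; hence $\ehr(F,s)=\binom{s+k-1}{k-1}\binom{s+n-k-1}{n-k-1}$. Slicing $t\,\mathscr{P}(T_{k,n})$ along the integer values of $\ell$ — for $0\le s\le t$ the slice $\{\ell=s\}$ is a lattice translate of $s\,F$, so it carries $\ehr(F,s)$ lattice points — gives the clean summation formula
\[
\ehr(T_{k,n},t)=\sum_{s=0}^{t}\binom{s+k-1}{k-1}\binom{s+n-k-1}{n-k-1}.
\]

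What remains is the purely combinatorial identity
\[
\sum_{s=0}^{t}\binom{s+k-1}{k-1}\binom{s+n-k-1}{n-k-1}=\frac{1}{\binom{n-1}{k-1}}\binom{t+n-k}{n-k}\sum_{j=0}^{k-1}\binom{n-k-1+j}{j}\binom{t+j}{j}.
\]
Both sides are polynomials in $t$ of degree $n-1$, so it suffices to check that they agree at $t=0$ and have the same first difference. Agreement at $t=0$ is the hockey-stick identity $\sum_{j=0}^{k-1}\binom{n-k-1+j}{j}=\binom{n-1}{k-1}$, which collapses the right-hand side to $1$. The first difference of the left-hand side is visibly $\binom{t+k-1}{k-1}\binom{t+n-k-1}{n-k-1}$, and checking that the right-hand side has the same first difference reduces to a polynomial identity in $t$ of degree $n-2$; this can be dispatched by Zeilberger's algorithm, by expanding the binomials via Vandermonde, or by a short induction on $k$ (summation by parts rewrites the left-hand sum with parameters $(k,n-k)$ in terms of the same sum with parameters $(k-1,n-k+1)$, and one verifies that the right-hand side obeys the matching recursion). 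This last binomial identity — coaxing the lopsided product on the right into the telescoping sum on the left, or conversely — is the only genuine obstacle; everything before it is bookkeeping once one notices that $\mathscr{P}(T_{k,n})$ is a one-step pyramid over $\Delta_{k-1}\times\Delta_{n-k-1}$.
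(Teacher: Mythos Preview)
The paper does not prove this proposition; it simply records the formula and refers to \cite[Theorem~1.6]{ferroni2} for the derivation. So there is no in-paper argument to match against, and your attempt has to stand on its own.

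The geometric half is correct and is the natural route: $\mathscr{P}(T_{k,n})$ is indeed a height-one lattice pyramid with apex $e_{\{1,\dots,k\}}$ over the facet $F=\{\ell=1\}$, your affine-unimodular identification of $F$ with $\Delta_{k-1}\times\Delta_{n-k-1}$ is valid, and slicing the $t$-th dilate along the primitive functional $\ell$ yields
\[
\ehr(T_{k,n},t)=\sum_{s=0}^{t}\binom{s+k-1}{k-1}\binom{s+n-k-1}{n-k-1}.
\]
This part is clean.

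The soft spot is the last paragraph. You correctly reduce the target identity to agreement at $t=0$ (which you verify) plus equality of first differences, but you then hand the remaining degree-$(n-2)$ identity off to ``Zeilberger, Vandermonde, or a short induction on $k$'' without executing any of them. That is a promissory note rather than a proof: in particular, the summation-by-parts step you hint at does not land cleanly back on the same family of sums because one of the two binomial factors picks up an index shift, so the induction needs a couple of extra lines before it actually closes. Either carry one of these verifications through, or---since the present paper treats the formula as imported---simply cite the closed form from \cite{ferroni2} once you have reached the summation formula above.
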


\begin{obs}
    Notice that this expression shows that the coefficients of $\ehr(\T_{k,n},t-1)$ are positive. We will need the positivity of these coefficients to guarantee the existence of large counterexamples to Conjecture \ref{deloera}.
\end{obs}

As a corollary of Theorem \ref{ehrhartrelax} and of the fact that we have explicit formulas for $\ehr(\T_{k,n}, t-1)$ and $\ehr(\U_{k,n}, t)$, we can deduce explicit formulas for the Ehrhart polynomial of all sparse paving matroids. 

\begin{cor}\label{formusp}
    Let $\M$ be a sparse paving matroid having $n$ elements, rank $k$, and exactly $\lambda$ circuit-hyperplanes. Then:
        \[ \ehr(\M, t) = \ehr(\U_{k,n}, t) - \lambda \ehr(\T_{k,n}, t-1).\]
\end{cor}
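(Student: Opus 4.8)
The plan is to induct on $\lambda$, using circuit-hyperplane relaxation to pass from a sparse paving matroid with $\lambda$ circuit-hyperplanes to one with $\lambda-1$, and reading off the effect on the Ehrhart polynomial from Theorem~\ref{ehrhartrelax}. First I would dispose of the degenerate cases: if $\lambda=0$ then every $k$-subset is a basis, so $M=U_{k,n}$ and the asserted identity holds trivially; and one checks that a disconnected sparse paving matroid has no circuit-hyperplane at all, so as soon as $\lambda\geq 1$ we know $M$ is connected, which is exactly what is needed in order to invoke Theorem~\ref{ehrhartrelax}.

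For the inductive step, suppose $\lambda\geq 1$ and pick a circuit-hyperplane $H$ of $M$. Let $\widetilde{M}$ be the relaxation of $M$ along $H$. The key observations are: relaxation changes neither the ground set nor the rank, so $\widetilde{M}$ still has $n$ elements and rank $k$; relaxation does not decrease connectivity (see \cite[Proposition~8.4.2]{oxley}), so $\widetilde{M}$ is again connected; $\widetilde{M}$ is again sparse paving, since every $k$-subset of $\widetilde{M}$ is still either a basis or a non-basis, and a non-basis of size $k$ in a sparse paving matroid is a circuit-hyperplane by Lemma~\ref{sparsepavingdefi}; and the non-bases of $\widetilde{M}$ are exactly the circuit-hyperplanes of $M$ other than $H$, so $\widetilde{M}$ has precisely $\lambda-1$ circuit-hyperplanes. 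Now Theorem~\ref{ehrhartrelax}, applied with the common parameters $k$ and $n$, gives
\[
\ehr(\widetilde{M},t) = \ehr(M,t) + \ehr(T_{k,n},t-1),
\]
while the inductive hypothesis applied to $\widetilde{M}$ gives $\ehr(\widetilde{M},t) = \ehr(U_{k,n},t) - (\lambda-1)\,\ehr(T_{k,n},t-1)$. Combining the two identities and solving for $\ehr(M,t)$ yields
\[
\ehr(M,t) = \ehr(U_{k,n},t) - \lambda\,\ehr(T_{k,n},t-1),
\]
as claimed. Equivalently, one builds a chain $M=M_0\to M_1\to\cdots\to M_\lambda=U_{k,n}$ of relaxations, applies Theorem~\ref{ehrhartrelax} at each of the $\lambda$ steps, and telescopes.

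The part that requires the most care — although it is entirely routine — is checking that the hypotheses of Theorem~\ref{ehrhartrelax} genuinely persist along the whole relaxation process: one must be sure that each intermediate matroid is still connected, and crucially that the minimal matroid contributing at every step is the \emph{same} $T_{k,n}$, which holds because the rank and cardinality are fixed by relaxation. This is precisely what makes the telescoping (equivalently, the induction) produce the clean coefficient $\lambda$ in front of $\ehr(T_{k,n},t-1)$. If desired, one can then substitute the explicit formula for $\ehr(U_{k,n},t)$ from \cite{ferroni1} and that of Proposition~\ref{formulasa} (shifted by $1$) to obtain a fully closed expression, but the identity above is all that the statement requires.
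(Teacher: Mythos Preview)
Your proof follows exactly the paper's approach: relax the circuit-hyperplanes one at a time, apply Theorem~\ref{ehrhartrelax} at each step, and telescope to reach $U_{k,n}$ (the paper compresses this into a single sentence). Your checks that each intermediate matroid remains sparse paving with one fewer circuit-hyperplane, and that connectivity is preserved along the chain, are correct and more explicit than what the paper writes.

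One side claim, however, is false: it is \emph{not} true that a disconnected sparse paving matroid has no circuit-hyperplanes. For instance $M=U_{1,2}\oplus U_{1,2}$ is sparse paving of rank $2$ on $4$ elements, disconnected, and both $\{1,2\}$ and $\{3,4\}$ are circuit-hyperplanes. So your reduction of the case $\lambda\geq 1$ to the connected case via this assertion does not work as stated. That said, this is not a gap relative to the paper: the paper's one-line proof also invokes Theorem~\ref{ehrhartrelax} without verifying connectedness, and the surrounding discussion makes clear that one may restrict to connected matroids throughout. If you want your write-up to genuinely cover disconnected $M$, you need a separate short argument rather than the incorrect claim; otherwise simply add connectedness to the hypotheses, as the paper effectively does.
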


\begin{proof}
    It is a direct consequence of Theorem \ref{ehrhartrelax}, since relaxing all the $\lambda$ hyperplanes of $\M$ yields the uniform matroid $\U_{k,n}$.
\end{proof}

\begin{obs}
    Since the coefficients of $\ehr(\T_{k,n},t-1)$ are nonnegative, this shows that the Ehrhart polynomial of a sparse paving matroid of rank $k$ and cardinality $n$ is coefficient-wise bounded by the Ehrhart polynomial of $\U_{k,n}$. This supports the upper bound part of \cite[Conjecture 1.5]{ferroni2}.
\end{obs}

Now that we have a good method to compute Ehrhart polynomials for a presumably enormous family of matroids (see Remark \ref{predominance}), it is reasonable to try to search for a potential counterexample to Conjecture \ref{deloera} within that family. This is what we will do in the next section.

\section{Searching a large number of circuit-hyperplanes}\label{sec:five}

The heuristics of our search will be the following. Since $\ehr(\T_{k,n},t-1)$ has positive coefficients, we see in Corollary \ref{formusp} that the coefficients of $\ehr(\M,t)$ are smaller when $\lambda$ is bigger. We will try to find $n$ and $k$ that admit a $\lambda$ sufficiently big to attain a negative coefficient for $\ehr(\M,t)$.

\subsection{Coding theory} 

Sparse paving matroids have a nice relation with two classes of objects that are very interesting on their own, and for which we have plenty of literature to deduce good bounds.

The \emph{Johnson Graph} $J(n,k)$ is the graph having as vertices all the subsets of cardinality $k$ of the set $\{1,\ldots,n\}$, and edges connecting them when their intersection is a set of cardinality $k-1$. It can be seen that $J(n,k)$ is the $1$-skeleton of the hypersimplex $\Delta_{k,n}$.

The following result provides a dictionary between sparse paving matroids, a particular class of binary codes and stable subsets of the Johnson graph.

\begin{teo}\label{equivalence}
    Let $\mathscr{S}$ be a collection of subsets of $\{1,\ldots,n\}$ such that all of its members have cardinality $k$. Then the following are equivalent:
    \begin{enumerate}[(a)]
        \item $\mathscr{S}$ is the set of circuit-hyperplanes of a sparse paving matroid with $n$ elements and rank $k$.
        \item $\mathscr{S}$ is a stable subset of nodes of the Johnson Graph $J(n,k)$.
        \item The set of all the indicator vectors of the elements of $\mathscr{S}$ is the set of words of a binary code such that all words have length $n$, constant weight $k$ and  minimum Hamming distance at least $4$ (i.e. any two distinct words of the code differ in at least $4$ positions).
    \end{enumerate}
\end{teo}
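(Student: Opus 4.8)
The plan is to establish the three equivalences by a cyclic chain of implications, translating the combinatorial conditions defining a sparse paving matroid (via Lemma \ref{sparsepavingdefi}) into the graph-theoretic and coding-theoretic languages. The key observation is that all three objects live on the same underlying set, namely the collection $\binom{[n]}{k}$ of $k$-subsets of $\{1,\ldots,n\}$, which we identify with the vertex set of $J(n,k)$ and, via indicator vectors, with the set of binary words of length $n$ and weight $k$. Under this identification, the Hamming distance between two weight-$k$ words $e_A$ and $e_B$ equals $2(k - |A\cap B|)$, so $|A\cap B| = k-1$ is the same as Hamming distance exactly $2$, and minimum Hamming distance at least $4$ is the same as \emph{never} having two distinct words at distance $2$; since weight-$k$ words can only be at even distance, this is precisely the condition that $\mathscr{S}$ is an independent (stable) set in $J(n,k)$. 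This gives the equivalence (b) $\Leftrightarrow$ (c) essentially for free.

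For (a) $\Leftrightarrow$ (b), I would argue as follows. By Lemma \ref{sparsepavingdefi}, a matroid of rank $k$ is sparse paving exactly when every $k$-subset is either a basis or a circuit-hyperplane; equivalently, a sparse paving matroid of rank $k$ on $[n]$ is determined by its set $\mathscr{S}$ of circuit-hyperplanes, all of which are $k$-subsets, and the bases are precisely the $k$-subsets not in $\mathscr{S}$. So the content is to determine which families $\mathscr{S}\subseteq\binom{[n]}{k}$ arise this way. Taking the complementary family $\mathscr{B} = \binom{[n]}{k}\smallsetminus\mathscr{S}$ as a candidate basis set, one must check the basis-exchange property. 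If two distinct members of $\mathscr{S}$ were adjacent in $J(n,k)$, say $A, B$ with $|A\cap B| = k-1$, write $A = (B\smallsetminus\{b\})\cup\{a\}$; then I would show directly that basis exchange fails for $\mathscr{B}$ (attempting to exchange out of some basis forces landing in $A$ or $B$), so $\mathscr{S}$ must be stable. Conversely, if $\mathscr{S}$ is stable, then $\mathscr{B} = \binom{[n]}{k}\smallsetminus\mathscr{S}$ is nonempty (for $n,k$ in the nondegenerate range, and trivially handled otherwise) and I would verify exchange: given $B_1\neq B_2$ in $\mathscr{B}$ and $a\in B_1\smallsetminus B_2$, the $k$-subsets $(B_1\smallsetminus\{a\})\cup\{b\}$ for $b\in B_2\smallsetminus B_1$ include at least one that is not in $\mathscr{S}$, because if $(B_1\smallsetminus\{a\})\cup\{b\}$ lay in $\mathscr{S}$ it would be adjacent in $J(n,k)$ to... — here the cleanest route is to invoke that this is exactly the statement that the complements of the members of $\mathscr{S}$ form the circuit set of a paving matroid, or simply cite \cite[Proposition 2.1.24]{oxley} or the standard fact that a family of $k$-subsets no two of which agree in $k-1$ coordinates is the set of non-bases of a (sparse paving) matroid.

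The step I expect to be the main obstacle is the verification of the basis-exchange property in the direction (b) $\Rightarrow$ (a), i.e.\ confirming that a stable set genuinely produces a matroid rather than just a downward-closed-looking family. The subtlety is that one is checking exchange not for $\mathscr{S}$ but for its complement $\mathscr{B}$ inside $\binom{[n]}{k}$, and a naive count of how many of the $|B_2\smallsetminus B_1|$ candidate swaps can fail needs the stability hypothesis applied carefully: two of the swapped sets $(B_1\smallsetminus\{a\})\cup\{b\}$ and $(B_1\smallsetminus\{a\})\cup\{b'\}$ are themselves adjacent in $J(n,k)$ (they share $k-1$ elements), so at most one of them can be a circuit-hyperplane, and as long as there are at least two choices of $b$ we are done; the case $|B_2\smallsetminus B_1| = 1$ must be treated separately and is immediate since then $B_1, B_2$ differ only in $a$ vs.\ the unique $b$, and the exchange lands back in $B_2\in\mathscr{B}$. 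Apart from this, the argument is a routine dictionary translation, and I would present it compactly, perhaps deferring the matroid-axiom check to the cited literature.
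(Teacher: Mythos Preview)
Your proposal is correct. For (b)\,$\Leftrightarrow$\,(c) you give exactly the paper's argument, just spelled out: an edge in $J(n,k)$ corresponds to Hamming distance $2$, and since constant-weight words are always at even distance, stability is equivalent to minimum distance $\geq 4$.

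For (a)\,$\Leftrightarrow$\,(b) the paper does not argue at all; it simply cites \cite[Lemma 8]{pendavingh2}. Your self-contained treatment is therefore more informative. The key step in your (b)\,$\Rightarrow$\,(a) direction is correct and elegant: the candidate swaps $(B_1\smallsetminus\{a\})\cup\{b\}$ for $b\in B_2\smallsetminus B_1$ all share the $(k-1)$-set $B_1\smallsetminus\{a\}$, hence are pairwise adjacent in $J(n,k)$, so stability forces at most one of them to lie in $\mathscr{S}$; together with the $|B_2\smallsetminus B_1|=1$ case this gives exchange. One small point you leave implicit but should state: once $\mathscr{B}=\binom{[n]}{k}\smallsetminus\mathscr{S}$ is shown to satisfy exchange, you must still check that each $S\in\mathscr{S}$ is a circuit-hyperplane of the resulting matroid (not merely a non-basis), so that Lemma~\ref{sparsepavingdefi} applies. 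This follows immediately from stability again: every $k$-set adjacent to $S$ is a basis, which forces every $(k-1)$-subset of $S$ to be independent (so $S$ is a circuit) and $\rk(S\cup\{x\})=k$ for every $x\notin S$ (so $S$ is a flat of rank $k-1$).

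For (a)\,$\Rightarrow$\,(b), your phrasing ``basis exchange fails for $\mathscr{B}$'' is the least convincing part of the sketch, since you would need to manufacture specific bases $B_1,B_2$ from the two adjacent circuit-hyperplanes $A,B$, and it is not obvious which to pick. The standard and cleaner route (which your parenthetical hints at) is: $A\cap B$ is a $(k-1)$-subset of the circuit $A$, hence independent of rank $k-1$; but $A\cap B$ lies in two distinct hyperplanes $A$ and $B$, forcing $\operatorname{cl}(A\cap B)\subseteq A\cap B$ and hence $A=B$, a contradiction. Either way the direction goes through, and your overall plan is sound.
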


\begin{proof}
    The proof of the equivalence between (a) and (b) can be found in \cite[Lemma 8]{bansal} by Bansal, Pendavingh and van der Pol. We reproduce it here for the sake of completeness. To prove that (a) $\Rightarrow$ (b), assume that $\M$ is sparse paving and that $H_1$ and $H_2$ are two circuit-hyperplanes such that $|H_1\cap H_2| = k - 1$ (i.e. that they correspond to adjacent vertices of $J(n,k)$). Since $\M$ is paving we obtain that $\rk(H_1\cap H_2) = |H_1\cap H_2| = k - 1$. Also, as $H_1$ and $H_2$ are hyperplanes, we have:
        \[ \rk(H_1) + \rk(H_2) = 2(k-1)=2k-2.\]
    But, on the other hand, as $H_1\cup H_2$ must have rank $k$, we have: 
        \[ \rk(H_1\cap H_2) + \rk(H_1\cup H_2) = (k-1) + k = 2k-1.\]
    All this information together implies that
        \[ \rk(H_1)+\rk(H_2) < \rk(H_1\cup H_2) + \rk(H_1\cap H_2),\]
    which cannot happen for a matroid. Now, to prove that (b) implies (a), assume that $\mathscr{S}$ is a stable subset of the Johnson Graph $J(n,k)$, and consider the collection $\mathscr{B}$ of all subsets of $\{1,\ldots,n\}$ of cardinality $k$ that are not in $\mathscr{S}$. We trivially have that $\mathscr{B}\neq\varnothing$ because the Johnson graph contains edges. Assume that the basis-exchange-property does not hold. There exist two sets $B_1$ and $B_2$ in $\mathscr{B}$ and an element $x\in B_1\smallsetminus B_2$ such that $(B_1\smallsetminus\{x\})\cup\{y\}\notin \mathscr{B}$ for all $y\in B_2\smallsetminus B_1$. Observe that this implies that $|B_2\smallsetminus B_1| > 1$, because otherwise it would be $(B_1\smallsetminus\{x\})\cup\{y\}=B_2\in\mathscr{B}$ for the only $y\in B_2\smallsetminus B_1$. Now, let us choose two distinct elements $y,z\in B_2\smallsetminus B_1$ and consider the sets $H_1 = (B_1\smallsetminus\{x\})\cup\{y\}$ and $H_2 = (B_1\smallsetminus\{x\})\cup\{z\}$. Since $H_1$ and $H_2$ are not in $\mathscr{B}$, it holds that they are in $\mathscr{S}$. This is a contradiction, because $|H_1\cap H_2| = |B_1\smallsetminus\{x\}|=k-1$ which contradicts that $\mathscr{S}$ was stable.
    
    The equivalence between (b) and (c) follows from the definitions: an edge on the Johnson graph corresponds to two words that have Hamming distance equal to $2$.
\end{proof}

In \cite[Theorem 26]{schroter} Joswig and Schr\"oter stated an extended version of these equivalences in the context of the study of a class of matroids called ``split matroids''.

Now we will use the fact that there are binary codes of length $n$, constant weight $k$ and minimum Hamming distance at least $4$ that contain \emph{many} words. Although there are several constructive proofs for such codes with an even larger number of words for special cases of $n$ or $k$, the bounds we will use here suffice for our purposes. The statement and the proof of the following result are due to Graham and Sloane and can be found in \cite{grahamsloane}.

\begin{teo}[\cite{grahamsloane}]\label{codes}
    There exists a binary code $\mathscr{S}$ with words of length $n$, constant weight $k$, Hamming distance at least $4$, and such that $|\mathscr{S}| \geq \frac{1}{n}\binom{n}{k}$.
\end{teo}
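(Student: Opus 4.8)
The plan is to count, for a fixed weight $k$ and length $n$, the number of codewords we can greedily select subject to the minimum-distance-$4$ constraint, and show this greedy lower bound is at least $\frac{1}{n}\binom{n}{k}$. Concretely, consider the ambient set $\binom{[n]}{k}$ of all weight-$k$ binary words, which has size $\binom{n}{k}$. We want a large subset $\mathscr{S}$ in which no two words are at Hamming distance $2$ — equivalently, a large independent set in the Johnson graph $J(n,k)$, by Theorem~\ref{equivalence}. The Graham--Sloane idea is to partition $\binom{[n]}{k}$ into $n$ classes according to a linear functional mod $n$, and observe that at least one class must be independent in $J(n,k)$, hence large enough by pigeonhole.

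First I would define, for a word $w = e_A$ with $A = \{a_1 < \dots < a_k\}\subseteq\{0,1,\dots,n-1\}$ (relabel the ground set by $\mathbb{Z}/n\mathbb{Z}$), the weight function $\sigma(w) = \sum_{i\in A} i \pmod n$. This gives a partition $\binom{[n]}{k} = \mathscr{S}_0 \sqcup \mathscr{S}_1 \sqcup \dots \sqcup \mathscr{S}_{n-1}$ into $n$ fibers of $\sigma$. Next I would check the key claim: each fiber $\mathscr{S}_r$ is a code of minimum distance at least $4$. Two weight-$k$ words at Hamming distance exactly $2$ differ by moving a single coordinate, i.e.\ $A' = (A\smallsetminus\{a\})\cup\{b\}$ with $a\neq b$; then $\sigma(w') - \sigma(w) \equiv b - a \not\equiv 0 \pmod n$ since $1\le |b-a|\le n-1$, so $w$ and $w'$ lie in different fibers. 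Therefore within any single fiber no two words are at distance $2$; since the words all have even weight $\dots$ actually since all words have the same weight $k$, any two of them are at even Hamming distance, so "distance at least $3$" already forces "distance at least $4$". Hence each $\mathscr{S}_r$ has minimum distance $\ge 4$ as desired.

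Finally, by pigeonhole, since the $n$ fibers partition a set of size $\binom{n}{k}$, at least one fiber satisfies $|\mathscr{S}_r| \ge \frac{1}{n}\binom{n}{k}$; take $\mathscr{S} = \mathscr{S}_r$ for such an $r$. This is a binary code of length $n$, constant weight $k$, minimum Hamming distance at least $4$, with $|\mathscr{S}|\ge\frac{1}{n}\binom{n}{k}$, which is the statement. The only slightly delicate point — really the crux of the whole argument — is the verification that the linear functional $\sigma$ separates words at distance $2$; everything else is bookkeeping and a pigeonhole step. I expect no genuine obstacle here, only the need to be careful that "same weight $\Rightarrow$ even distance" is what upgrades the distance-$3$ bound (automatic from the fibration) to the distance-$4$ bound claimed in the statement.
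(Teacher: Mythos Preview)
Your proposal is correct and is essentially the same argument as the paper's: both implement the Graham--Sloane partition of $\binom{[n]}{k}$ into $n$ fibers of the linear functional $\sigma(A)=\sum_{i\in A} i \pmod n$, verify that distance-$2$ neighbors land in different fibers (hence each fiber has minimum distance $\ge 4$, using that equal-weight words have even distance), and apply pigeonhole. The only differences are cosmetic (index shift, your explicit remark that ``same weight $\Rightarrow$ even distance'' is what upgrades $\ge 3$ to $\ge 4$).
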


\begin{proof}
    Let us denote by $\mathbb{F}^n_{2,k}$ the set of all binary words of length $n$ and constant weight $k$, and by $\mathbb{Z}_n$ the set of integers modulo $n$. Consider the map:
        \[ T: \mathbb{F}^n_{2,k} \to \mathbb{Z}_n,\]
        \[ T(a_1, \ldots, a_n) = \sum_{i=1}^n (i-1)a_i \pmod{n}.\]
    For each $i=0,\ldots, n-1$ let us call $C_i = T^{-1}(\{i\})$. We claim that for each $C_i$, the minimum distance between two of its words is at least $4$. Assume on the contrary that there are two distinct words $\mathbf{a} = (a_1,\ldots,a_n)$ and $\mathbf{b}=(b_1,\ldots, b_n)$ at distance less than $4$ in $C_i$. Since both words have the same weight, we have that their distance is exactly $2$. Also, we see that there must exist two positions, say $r$ and $s$, such that $a_r = 1$ and $b_r = 0$ and $a_s=0$ and $b_s=1$. But observe that
        \begin{align*} 
            T(\mathbf{a}) &= x + r = i  \pmod{n},\\
            T(\mathbf{b}) &= x + s = i \pmod{n},
        \end{align*}
    for a certain $x\in \mathbb{Z}_n$. This implies that $r\equiv s\pmod{n}$ which is clearly impossible. Thus, the minimum distance between words of $C_i$ is $4$. Now, since:
        \[ \binom{n}{k} = |\mathbb{F}^n_{2,k}| = \sum_{i=0}^{n-1} |C_i|,\]
    we see that there has to be at least one $i$ such that $|C_i|\geq \frac{1}{n} \binom{n}{k}$.
\end{proof}

\subsection{An explicit counterexample}

Let us show how our set-up allows us to construct a counterexample to Conjectures \ref{deloera} and \ref{castilloliu}.

\begin{teo}
    There exists a sparse paving matroid $\M$ with $20$ elements, rank $9$ and having $8398$ circuit-hyperplanes, and hence having Ehrhart polynomial with negative quadratic and cubic coefficients.
\end{teo}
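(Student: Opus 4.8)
The plan is to build the matroid directly from the lower bound on constant-weight codes and then read off its Ehrhart polynomial from Corollary~\ref{formusp}; the only substantive work will be evaluating two coefficients of a polynomial of degree $19$. For the construction, take $n=20$ and $k=9$, so that $\binom{20}{9}=167960$ and $\frac{1}{n}\binom{n}{k}=\frac{167960}{20}=8398$. By Theorem~\ref{codes} there is a binary code of length $20$, constant weight $9$ and minimum Hamming distance at least $4$ having at least $8398$ words, which by Theorem~\ref{equivalence} amounts to a stable set of the Johnson graph $J(20,9)$ of size at least $8398$. Any subset of a stable set is stable, so we may select a stable set $\mathscr S$ with $|\mathscr S|=8398$; by Theorem~\ref{equivalence} this $\mathscr S$ is the set of circuit-hyperplanes of a sparse paving matroid $M$ on $20$ elements of rank $9$, and hence $M$ has exactly $\lambda=8398$ circuit-hyperplanes.

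By Corollary~\ref{formusp} the Ehrhart polynomial of $\mathscr P(M)$ is
\[
    \ehr(M,t)=\ehr(U_{9,20},t)-8398\,\ehr(T_{9,20},t-1).
\]
Both terms are available in closed form: $\ehr(U_{9,20},t)$ from the formula of \cite{ferroni1} (which, by the result quoted in the introduction, has all coefficients positive), and $\ehr(T_{9,20},t-1)$ by substituting $t-1$ into Proposition~\ref{formulasa}, where once more every coefficient is positive. Therefore each $[t^i]\ehr(M,t)$ is a difference of two positive reals, and to obtain negativity for $i=2,3$ it suffices to verify
\[
    8398\cdot[t^i]\ehr(T_{9,20},t-1)>[t^i]\ehr(U_{9,20},t),\qquad i=2,3.
\]

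It remains to compute these four numbers. In Proposition~\ref{formulasa} the factor $\binom{t+11}{11}$ has degree $11$ and the sum $\sum_{j=0}^{8}\binom{10+j}{j}\binom{t+j}{j}$ has degree $8$, so only finitely many products of monomials contribute to the coefficients of $t^2$ and $t^3$, and the same is true of the uniform-matroid formula; thus each of the four quantities is an explicit rational and the two inequalities can be checked directly. I expect this arithmetic to be the main obstacle: there is no conceptual shortcut, the coefficients of a degree-$19$ Ehrhart polynomial are large, and the expansion is error-prone, so in practice one carries it out with a computer algebra system. Once the inequalities are confirmed we get $[t^2]\ehr(M,t)<0$ and $[t^3]\ehr(M,t)<0$, which proves the statement and therefore also disproves Conjectures~\ref{deloera} and~\ref{castilloliu}.
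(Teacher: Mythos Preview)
Your proposal is correct and follows essentially the same approach as the paper: construct the matroid via the Graham--Sloane bound and Theorem~\ref{equivalence}, then apply Corollary~\ref{formusp} and check the quadratic and cubic coefficients by direct computation. The only minor differences are that the paper observes that for $n=20$, $k=9$ every class $C_i$ in the proof of Theorem~\ref{codes} already has size exactly $8398$ (so no subset is needed), and the paper records the explicit negative values $[t^2]\ehr(M,t)=-\tfrac{142179543511}{15437822400}$ and $[t^3]\ehr(M,t)=-\tfrac{4816883312963}{51459408000}$ rather than leaving the verification implicit.
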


\begin{proof}
    By Theorem \ref{codes} there exists a binary code of length $20$, constant weight $9$ and Hamming distance at least $4$, having at least $\frac{1}{20}\binom{20}{9} = 8398$ words. In fact, it can be proved that for the particular choice of $n=20$ and $k=9$ all the $C_i$'s in the proof of Theorem \ref{codes} have cardinality $8398$. In particular, by choosing for instance $C_0$ as our code, we have a code with $8398$ words.
    
    By the equivalence between (a) and (c) in Theorem \ref{equivalence}, we get that there is a sparse paving matroid $\M$ with $20$ elements and rank $9$ that has exactly $8398$ circuit-hyperplanes. 
    
    Now, using the formula of Corollary \ref{formusp}, we obtain that
    \[\ehr(\M, t) = \ehr(\U_{9,20}, t) - 8398 \ehr(\T_{9,20}, t-1),\]
    and we can compute this polynomial explicitly and see that its quadratic coefficient is $-\frac{142179543511}{15437822400} < 0$ and its cubic coefficient is $-\frac{4816883312963}{51459408000} < 0$.
\end{proof}

\section{Constructing counterexamples with small rank}\label{sec:six}

The goal of this section is to extend the construction of our counterexample in order to prove Theorems \ref{greaterthan9} and \ref{main3}.

Experimentation with several values of $n$ and $k$ shows that the most well-behaved coefficient for our purposes is the quadratic one. In other words, we experimentally observed that in the vast majority of cases, when a matroid is not Ehrhart positive, in particular its quadratic Ehrhart coefficient is negative. See Remark \ref{cubic} for an example of a matroid whose Ehrhart polynomial has a negative cubic coefficient, and having the property that the remaining coefficients are positive.

The idea is to give a good lower bound for $[t^2]\ehr(\T_{k,n},t-1)$ and a good upper bound for $[t^2]\ehr(\U_{k,n},t)$ that allow us to work more comfortably. 

\subsection{A lower bound for minimal matroids} We start with a precise expression for the quadratic coefficient of $\ehr(\T_{k,n},t-1)$.

\begin{lema}\label{formutkn2}
    For every $1\leq k\leq n - 1$, the quadratic coefficient of $\ehr(\T_{k,n},t-1)$ is given by:
    \[ [t^2]\ehr(\T_{k,n},t-1) = \frac{1}{\binom{n-1}{k-1}} \left(  \stirling{n-k}{2} \frac{1}{(n-k)!} + \frac{1}{n-k} \sum_{j=1}^{k-1} \frac{1}{j} \binom{n-k-1+j}{j}\right),\] 
    where the brackets denote the unsigned Stirling numbers of the first kind.
\end{lema}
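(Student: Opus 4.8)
The plan is to start from the closed form for $\ehr(T_{k,n},t)$ in Proposition~\ref{formulasa}, substitute $t\mapsto t-1$, and extract the coefficient of $t^2$ by a direct Taylor expansion around $t=0$. Write
\[
\ehr(T_{k,n},t-1)= \frac{1}{\binom{n-1}{k-1}}\,\binom{t+n-k-1}{n-k}\sum_{j=0}^{k-1}\binom{n-k-1+j}{j}\binom{t+j-1}{j}.
\]
The factor $\binom{t+n-k-1}{n-k}=\frac{1}{(n-k)!}t(t+1)\cdots(t+n-k-1)$ vanishes at $t=0$ with a simple zero, and similarly $\binom{t+j-1}{j}=\frac{1}{j!}t(t+1)\cdots(t+j-1)$ vanishes at $t=0$ for every $j\geq 1$, while the $j=0$ term is identically $1$. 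So in the product of the outer factor with the sum, the only way to produce a term of total degree $2$ in $t$ is: take the $t^1$-coefficient of the outer binomial times the $t^1$-coefficient of the sum, since the outer binomial has no constant term. Hence
\[
[t^2]\ehr(T_{k,n},t-1)=\frac{1}{\binom{n-1}{k-1}}\Bigl([t^1]\binom{t+n-k-1}{n-k}\Bigr)\Bigl([t^1]\sum_{j=0}^{k-1}\binom{n-k-1+j}{j}\binom{t+j-1}{j}\Bigr).
\]

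The two linear coefficients are then computed separately. For the outer factor, $[t^1]\binom{t+n-k-1}{n-k}$ is the coefficient of $t^1$ in $\frac{1}{(n-k)!}\prod_{i=0}^{n-k-1}(t+i)$; expanding, this equals $\frac{1}{(n-k)!}\,e_{n-k-1}(1,2,\dots,n-k-1)$, the elementary symmetric polynomial of degree $n-k-1$ in the arguments $1,\dots,n-k-1$, which is exactly the unsigned Stirling number of the first kind ${n-k\brack 2}/(n-k)!$ after recalling that ${m\brack 2}=e_{m-1}(1,\dots,m-1)\cdot (m-1)!$ — more precisely, using the standard identity $\prod_{i=0}^{m-1}(t+i)=\sum_{\ell}{m\brack \ell}t^{\ell}$, one reads off $[t^1]\prod_{i=0}^{n-k-1}(t+i)={n-k\brack 1}$... here I must be careful: the coefficient of $t^1$ in $\prod_{i=0}^{m-1}(t+i)$ is ${m\brack 1}=(m-1)!$ only when the product starts at $i=0$, but in our case the product is $t(t+1)\cdots(t+n-k-1)$ which has a factor $t$, so $[t^1]$ of it is $[t^0]$ of $(t+1)\cdots(t+n-k-1)=\prod_{i=1}^{n-k-1}(t+i)$, namely $(n-k-1)!$... this does not immediately look like a Stirling number with second index $2$. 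The resolution is that Proposition~\ref{formulasa} uses $\binom{t+n-k}{n-k}$, i.e. the product $(t+1)(t+2)\cdots(t+n-k)$ with \emph{no} factor of $t$, so after $t\mapsto t-1$ it becomes $t(t+1)\cdots(t+n-k-1)$; then $[t^1]$ of $\frac{1}{(n-k)!}t(t+1)\cdots(t+n-k-1)=\frac{1}{(n-k)!}\cdot(n-k-1)!$... I will instead recompute using the clean identity $\frac{(t+n-k-1)!}{(n-k)!\,(t-1)!}$ and the expansion $\prod_{i=1}^{m}(t+i-1)=\sum_\ell {m+? }$; the cleanest route is $[t^1]\binom{t+n-k-1}{n-k}=\frac{1}{(n-k)!}{n-k\brack 2}$ follows from writing $\binom{t+n-k-1}{n-k}=\frac{1}{(n-k)!}\sum_{\ell=1}^{n-k}{n-k\brack \ell}t^\ell$ via the generating identity $t^{\overline{n-k}}=t(t+1)\cdots(t+n-k-1)=\sum_\ell {n-k\brack \ell}t^\ell$ for rising factorials, which is exactly the definitional identity for unsigned Stirling numbers of the first kind. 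This gives the first summand $\frac{1}{\binom{n-1}{k-1}}\cdot{n-k\brack 2}\frac{1}{(n-k)!}$ once we pair it with the constant ($j=0$) term of the sum — but wait, the constant term of the sum contributes to $[t^0]$, not $[t^1]$; so this first summand in the claimed formula must instead come from pairing the $t^1$-coefficient of the outer factor with the $t^0$-coefficient of the sum. Since $[t^0]$ of the sum is the $j=0$ term, which is $1$, the first summand is $\frac{1}{\binom{n-1}{k-1}}\cdot\frac{1}{(n-k)!}{n-k\brack 2}\cdot 1$, matching the claim.

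For the second summand I would compute $[t^1]$ of the sum: only the outer-factor's $[t^1]$ paired with the sum's... no — re-examining, the product is (outer)$\times$(sum), the outer has no $t^0$ term, so $[t^2]$(product) $=[t^1]$(outer)$\cdot[t^1]$(sum) $+\,[t^2]$(outer)$\cdot[t^0]$(sum). I realize the claimed formula has \emph{two} terms, so both contributions survive: the first term $\frac{1}{(n-k)!}{n-k\brack 2}$ is $[t^2]$(outer)$\cdot 1$, since ${n-k\brack 2}$ is precisely the $t^2$-coefficient of the rising factorial $t^{\overline{n-k}}$; and the second term $\frac{1}{n-k}\sum_{j=1}^{k-1}\frac1j\binom{n-k-1+j}{j}$ should be $[t^1]$(outer)$\cdot[t^1]$(sum). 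Now $[t^1]$(outer) $=\frac{1}{(n-k)!}{n-k\brack 1}=\frac{(n-k-1)!}{(n-k)!}=\frac{1}{n-k}$, which explains the factor $\frac1{n-k}$. And $[t^1]\binom{t+j-1}{j}=[t^1]\frac{1}{j!}t(t+1)\cdots(t+j-1)=\frac{(j-1)!}{j!}=\frac1j$, so $[t^1]$(sum)$=\sum_{j=1}^{k-1}\binom{n-k-1+j}{j}\cdot\frac1j$. Multiplying these two gives exactly the second summand. Hence the identity follows. The main obstacle is purely bookkeeping: keeping straight which factor is multiplied by which to land on total degree $2$, and correctly identifying $[t^1]$ and $[t^2]$ of the rising factorial $t^{\overline{n-k}}$ with ${n-k\brack 1}$ and ${n-k\brack 2}$ respectively — there is no conceptual difficulty, only the risk of an off-by-one in the rising-factorial/Stirling dictionary, which I would pin down once and for all at the start via $\prod_{i=0}^{m-1}(t+i)=\sum_{\ell=0}^{m}{m\brack\ell}t^\ell$.
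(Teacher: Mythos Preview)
Your argument is correct and is exactly the direct computation the paper has in mind: after the shift $t\mapsto t-1$, use the rising-factorial identity $t(t+1)\cdots(t+m-1)=\sum_\ell{m\brack\ell}t^\ell$ to read off $[t^1]$ and $[t^2]$ of the outer factor $\binom{t+n-k-1}{n-k}$ as $\frac{1}{n-k}$ and $\frac{1}{(n-k)!}{n-k\brack 2}$, and $[t^0]$ and $[t^1]$ of the inner sum as $1$ and $\sum_{j\ge 1}\frac{1}{j}\binom{n-k-1+j}{j}$, then combine via $[t^2](\text{outer}\cdot\text{sum})=[t^2](\text{outer})\cdot[t^0](\text{sum})+[t^1](\text{outer})\cdot[t^1](\text{sum})$ (the $[t^0](\text{outer})$ term vanishes). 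The paper's proof is literally ``obtain by hand from Proposition~\ref{formulasa},'' so you have simply filled in those details; your write-up, however, contains several false starts and self-corrections (e.g.\ initially claiming only the $[t^1]\cdot[t^1]$ cross-term contributes) that should be excised before this is presentable.
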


\begin{proof}
    Assume that $a$ is a nonnegative integer and consider the polynomial in $t$ given by $\binom{t+a-1}{a}$. For $a\geq 1$ it is a multiple of $t$ with linear term equal to $\frac{(a-1)!}{a!}=\frac{1}{a}$. On the other hand, for $a=0$, it is constantly $1$. Using Proposition \ref{formulasa} we see that 
    \[
        \binom{n-1}{k-1}\ehr(\T_{k,n},t-1) = \binom{t+n-k-1}{n-k} \sum_{j=0}^{k-1}\binom{n-k-1+j}{j}\binom{t+j-1}{j}.\]
    Since $n-k\geq 1$, the first factor on the right is a multiple of $t$. However, for $j=0$ the first term of the sum is identically $1$. Hence, the total quadratic factor of the above polynomial can be recovered as    
    \[[t^2]\binom{t+n-k-1}{n-k}+ \sum_{j=1}^{k-1} \binom{n-k-1+j}{j}[t^1]\binom{t+j-1}{j}\cdot[t^1]\binom{t+n-k-1}{n-k},\]
    which, after dividing by $\binom{n-1}{k-1}$, reduces to the expression of the statement, because $[t^2]\binom{t+n-k-1}{n-k} = \frac{1}{(n-k)!}\tstirling{n-k}{2}$ and, as we said before, $[t^1]\binom{t+a-1}{a} = \frac{1}{a}$.
\end{proof}

Using the preceding Lemma we can give a nice lower bound for the quadratic coefficient of $\ehr(\T_{k,n},t-1)$, essentially by just ignoring many of the terms appearing in the expression we just obtained.

\begin{prop}\label{cotainf}
    The quadratic coefficient of $\ehr(\T_{k,n},t-1)$ satisfies:
    \[ [t^2]\ehr(\T_{k,n},t-1) \geq \frac{1}{k(n-1)}.\]
\end{prop}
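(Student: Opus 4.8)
\emph{Proof proposal.} The plan is to start from the exact expression in Lemma \ref{formutkn2} and simply discard everything that is manifestly nonnegative until one clean term survives. We may assume $2\le k\le n-1$ (the range relevant for connected matroids, and in particular for our counterexamples), so that $n-k\ge 1$. Since the unsigned Stirling numbers of the first kind are nonnegative, the quantity ${{n-k}\brack{2}}\frac{1}{(n-k)!}$ is $\ge 0$, and therefore
\[
[t^2]\ehr(T_{k,n},t-1)\;\ge\;\frac{1}{\binom{n-1}{k-1}}\cdot\frac{1}{n-k}\sum_{j=1}^{k-1}\frac{1}{j}\binom{n-k-1+j}{j}.
\]
Next I would drop every summand except the one with $j=k-1$, which contributes $\frac{1}{k-1}\binom{n-k-1+(k-1)}{k-1}=\frac{1}{k-1}\binom{n-2}{k-1}$. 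Since all summands are nonnegative this gives
\[
[t^2]\ehr(T_{k,n},t-1)\;\ge\;\frac{1}{\binom{n-1}{k-1}}\cdot\frac{1}{n-k}\cdot\frac{1}{k-1}\binom{n-2}{k-1}.
\]

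The only remaining ingredient is the elementary identity $\binom{n-2}{k-1}\big/\binom{n-1}{k-1}=\frac{n-k}{n-1}$, obtained by expanding both binomials as factorials. Substituting it, the two factors of $n-k$ cancel, and we are left with $\frac{1}{(k-1)(n-1)}$. This is in fact slightly stronger than the claimed inequality; since $k-1<k$ we conclude
\[
[t^2]\ehr(T_{k,n},t-1)\;\ge\;\frac{1}{(k-1)(n-1)}\;\ge\;\frac{1}{k(n-1)}.
\]

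There is essentially no obstacle here: the argument is just a sequence of "drop a nonnegative term" steps followed by one binomial simplification, so the only points requiring a word of care are that each discarded quantity is genuinely nonnegative (immediate) and that the denominators $n-k$, $k-1$, $n-1$ are positive (true in the stated range). If one wished to cover $k=1$ as well, the sum over $j$ is empty and one would instead have to retain the Stirling term; but that case plays no role in the applications, so I would simply exclude it.
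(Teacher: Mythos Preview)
Your argument is correct and follows essentially the same route as the paper: discard the Stirling term and all summands except $j=k-1$, then use $\binom{n-2}{k-1}/\binom{n-1}{k-1}=\tfrac{n-k}{n-1}$ to obtain $\frac{1}{(k-1)(n-1)}\ge\frac{1}{k(n-1)}$. The only cosmetic difference is the order in which the two nonnegative pieces are dropped, and you make explicit the binomial cancellation that the paper leaves as ``expanded and canceled.''
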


\begin{proof}
    Observe that in the sum inside the parentheses in the formula of Lemma \ref{formutkn2}, we can pick only the term corresponding to $j=k-1$ and forget the rest. Hence:
    \begin{align*}
        [t^2]\ehr(\T_{k,n},t-1) &\geq \frac{1}{\binom{n-1}{k-1}} \left( \stirling{n-k}{2} \frac{1}{(n-k)!} + \frac{1}{(n-k)(k-1)} \binom{n-2}{k-1}\right)\\
        &\geq \frac{1}{\binom{n-1}{k-1}}\cdot  \frac{1}{(n-k)(k-1)} \binom{n-2}{k-1}\\
        &= \frac{1}{(k-1)(n-1)}\\
        &\geq \frac{1}{k(n-1)},
    \end{align*}
    where in the second to last step we just expanded the binomial coefficients and canceled many factors.
\end{proof}

\subsection{An upper bound for uniform matroids}

Before establishing an upper bound for the quadratic coefficient of $\ehr(\U_{k,n},t)$, we state some formulas that relate the Stirling numbers of the first kind with the \emph{harmonic} numbers. If we denote by $H^{(k)}_n$ the number $1+\frac{1}{2^k}+\ldots + \frac{1}{n^k}$, and $H_n = H^{(1)}_n$, we have the following identities: 
\begin{align}
    \frac{1}{(n-1)!} \stirling{n}{1} &= 1,\\
    \frac{1}{(n-1)!} \stirling{n}{2} &= H_{n-1},\\
    \frac{1}{(n-1)!} \stirling{n}{3} &= \frac{1}{2} \left( H_{n-1}^2 - H_{n-1}^{(2)}\right).
\end{align}
None of these identities is difficult to prove. For further reading on the relation between the harmonic numbers and the Stirling numbers of the first kind, we refer to Graham, Knuth and Patashnik's book \cite{grahamknuth}.

\begin{prop}\label{cotasup}
    The quadratic coefficient of $\ehr(\U_{k,n},t)$ satisfies:
        \[ [t^2] \ehr(\U_{k,n},t) \leq \frac{\binom{k+1}{2}+\binom{k}{2}}{(n-1)!}\cdot\stirling{n}{3} \leq \binom{k+1}{2} H_{n-1}^2.\]
\end{prop}

\begin{proof}
    We will use Theorem \ref{formula-ehrhart-wlah}. For the quadratic term of the Ehrhart polynomial of a uniform matroid it holds:
    \begin{equation}\label{wlah}
        [t^2]\ehr(\U_{k,n},t) = \frac{1}{(n-1)!} \left( W(k-1,n,3) + W(k-2,n,3)\right).
    \end{equation} 
    
    Recall that the number $W(\ell, n, 3)$ denotes the number of ways of partitioning the set $\{1,\ldots,n\}$ into $3$ linearly ordered blocks having total weight $\ell$. 
    
    Let us prove that 
    \begin{equation}\label{deswlah}W(\ell, n, 3) \leq \binom{\ell+2}{2} W(0,n,3).\end{equation} To this end, let us start with a partition of $\{1,\ldots,n\}$ into $3$ blocks having total weight $0$. Consider the operation consisting of the following three steps:
    \begin{itemize}
        \item Swap the elements in the first position of the first block with the $x$-th smallest element of the first block.
        \item Swap the elements in the first position of the second block with the $y$-th smallest element of the second block.
        \item  Swap the elements in the first position of the third block with the $z$-th smallest element of the third block.
    \end{itemize}
    If $(x-1)+(y-1)+(z-1) = \ell$ what we obtain is a partition of $\{1,\ldots,n\}$ into three blocks having total weight $\ell$. Observe that we can do this in at most $\binom{\ell+2}{2}$ ways (the number of ways of putting $\ell$ balls into $3$ boxes). Also, in this way we can achieve all the possible partitions of weight $\ell$. It is clear how to deduce the inequality \eqref{deswlah} from this fact. Now, we know that $W(0,n,m) = \tstirling{n}{m}$ (see \cite[Remark 3.8]{ferroni1}). If we use the formula of equation \eqref{wlah}, we get the first inequality in our statement. Also, since 
    \[\frac{1}{(n-1)!} \stirling{n}{3} = \frac{1}{2} \left( H_{n-1}^2 - H_{n-1}^{(2)}\right)\leq \frac{1}{2}H_{n-1}^2,\]
    it is easy to conclude the second inequality of the statement (we also used that $\binom{k}{2} \leq \binom{k+1}{2}$ to get a simpler form of the right-hand-side).
\end{proof}

\subsection{The main theorems} 

We can use our bounds to show the existence of counterexamples on every rank $k\geq 3$. The following is a refined form of Theorem \ref{main3}; its proof also settles most part of Theorem \ref{greaterthan9}.

\begin{teo}\label{thm:el-clave}
    If $n\geq 3589$ and $3\leq k \leq n - 3$ then there exists a matroid of rank $k$ and cardinality $n$ that is not Ehrhart positive. For $4\leq k \leq n-4$ we may choose $n\geq 104$.
\end{teo}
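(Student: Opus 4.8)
The strategy is to combine Corollary \ref{formusp} with the lower bound for the number of circuit-hyperplanes coming from Theorem \ref{codes}. By Corollary \ref{formusp}, a sparse paving matroid $M$ with $n$ elements, rank $k$ and $\lambda$ circuit-hyperplanes satisfies
\[ [t^2]\ehr(M,t) = [t^2]\ehr(U_{k,n},t) - \lambda\cdot[t^2]\ehr(T_{k,n},t-1). \]
So to make this quadratic coefficient negative it suffices to produce a $\lambda$ with
\[ \lambda > \frac{[t^2]\ehr(U_{k,n},t)}{[t^2]\ehr(T_{k,n},t-1)}. \]
By Theorem \ref{equivalence} and Theorem \ref{codes} we may always take $\lambda = \frac{1}{n}\binom{n}{k}$ (or its floor, which is harmless for the estimates), so it is enough to show
\[ \frac{1}{n}\binom{n}{k} > \frac{[t^2]\ehr(U_{k,n},t)}{[t^2]\ehr(T_{k,n},t-1)}. \]

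To control the right-hand side, I would plug in the bounds already established: Proposition \ref{cotainf} gives $[t^2]\ehr(T_{k,n},t-1) \geq \frac{1}{k(n-1)}$, and Proposition \ref{cotasup} gives $[t^2]\ehr(U_{k,n},t) \leq \binom{k+1}{2} H_{n-1}^2$. Therefore
\[ \frac{[t^2]\ehr(U_{k,n},t)}{[t^2]\ehr(T_{k,n},t-1)} \leq k(n-1)\binom{k+1}{2} H_{n-1}^2, \]
and it suffices to verify the clean inequality
\[ \frac{1}{n}\binom{n}{k} > k(n-1)\binom{k+1}{2} H_{n-1}^2, \]
i.e. $\binom{n}{k} > n(n-1)\,k\binom{k+1}{2}H_{n-1}^2$. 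The next step is to check that this holds for all $k$ in the stated ranges once $n$ is large enough. The point is that $\binom{n}{k}$ grows faster than any fixed power of $n$ as long as $k$ stays away from $0$ and $n$; by Remark \ref{rango} we may assume $2k\leq n$, and for $3\leq k$ (respectively $4\leq k$) the worst case is the smallest admissible $k$, since $\binom{n}{k}$ is increasing in $k$ for $k\leq n/2$ while the right-hand side is only polynomial in $k$. So I would set $k=3$ (resp. $k=4$), bound $H_{n-1} \leq 1+\ln(n-1) < \ln n + 1$, and solve for $n$: for $k=3$ the inequality $\binom{n}{3} > n(n-1)\cdot 3\cdot 6\cdot(\ln n+1)^2$ reduces to $\frac{n-2}{6} > 18(\ln n+1)^2$, and a direct numerical check shows this holds once $n\geq 3589$; for $k=4$ one gets $\frac{(n-2)(n-3)}{24} > 4\cdot 10\cdot(\ln n+1)^2 \cdot n(n-1)/\binom{n}{4}$ — more precisely $\binom{n}{4} > 40\,n(n-1)(\ln n+1)^2$, which holds for $n\geq 104$. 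Finally, having found a non-Ehrhart-positive sparse paving matroid of rank $k$ on $n$ elements, I would invoke the reduction to connected matroids: either note that a sparse paving matroid with at least one circuit-hyperplane and $2\le k\le n-2$ is automatically connected (a circuit of size $k<n$ together with $2k\le n$ forces connectivity — alternatively cite \cite[Proposition 8.4.2]{oxley} since it is a relaxation of a connected matroid such as $T_{k,n}$ with extra relaxations, or directly verify connectivity from the structure of its bases), so the resulting polytope has dimension $n-1$ and the negative quadratic coefficient is a genuine obstruction to Ehrhart positivity.

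The main obstacle is purely the explicit numerics: turning the asymptotic inequality $\binom{n}{k} \gg \text{poly}(n)$ into the sharp thresholds $n\geq 3589$ and $n\geq 104$ requires a careful choice of which slack to keep in Propositions \ref{cotainf} and \ref{cotasup} (the bounds there are already somewhat lossy, so one should be a little careful that the stated thresholds are actually attainable with these bounds, and if not, either sharpen the bound on $H_{n-1}$ or on the weighted Lah number, or accept a slightly worse constant and then check the borderline cases $n=104,\dots$ and $n=3589,\dots$ by a finite computation). A secondary, more conceptual point to get right is the claim that the worst $k$ is the extreme one: because $\binom{n}{k}$ is unimodal in $k$ with maximum at $k=\lfloor n/2\rfloor$, and the polynomial right-hand side $k(n-1)\binom{k+1}{2}H_{n-1}^2$ is increasing in $k$, the ratio $\binom{n}{k} / \bigl(n\cdot k(n-1)\binom{k+1}{2}H_{n-1}^2\bigr)$ is minimized over $3\le k\le n-3$ at $k=3$ (using $2k\le n$ and Remark \ref{rango} to restrict to $k\le n/2$, where $\binom{n}{k}$ is increasing); once that monotonicity observation is in place, verifying a single inequality in $n$ finishes each case.
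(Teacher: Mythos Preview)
Your approach is the same as the paper's: combine Corollary~\ref{formusp} with the Graham--Sloane bound, then use Propositions~\ref{cotainf} and~\ref{cotasup} to reduce to an explicit inequality in $n$ and $k$. The paper also reduces to $2k\le n$ via duality, then splits into the cases $k=3$, $4\le k\le 8$, and $k\ge 9$ rather than appealing to a single ``worst case at the smallest $k$'' monotonicity argument.

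There is, however, a genuine numerical error in your write-up. For $k=3$ the inequality you derive is equivalent to $n-2>108\,H_{n-1}^2$, and this does \emph{not} hold at $n=3589$: one has $H_{3588}\approx 8.76$, so $108\,H_{3588}^2\approx 8290$, far larger than $3587$. In fact (as the paper finds) the crude bound only starts holding around $n\ge 10439$. Similarly, for $k=4$ the inequality $\binom{n}{4}>40\,n(n-1)(\ln n+1)^2$ fails at $n=104$ by a factor of roughly $3$; it first holds somewhere near $n\approx 200$. So the specific claim that ``a direct numerical check shows this holds once $n\ge 3589$'' (resp.\ $n\ge 104$) is false. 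The paper closes this gap exactly as your fallback paragraph anticipates: it uses the crude estimate down to $n=10439$ for $k=3$, and then verifies $[t^2]\ehr(M,t)<0$ \emph{directly from the exact formulas} (not from the lossy bounds) for $3589\le n\le 10438$; analogously for $4\le k\le 8$ down to $n=104$. So your plan is right, but the thresholds only come out after that finite computation, not from the inequality alone.

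A smaller point: your monotonicity claim that $\binom{n}{k}\big/\bigl(k\binom{k+1}{2}\bigr)$ is increasing on $3\le k\le n/2$ is not quite true near $k=n/2$ (the ratio of consecutive terms is $\frac{(n-k)k^2}{(k+1)^2(k+2)}$, which drops below $1$ for $k$ within a bounded distance of $n/2$). The minimum is nonetheless at $k=3$, since near $k=n/2$ the value is exponentially large in $n$; but the argument as stated needs a line or two more. The paper sidesteps this by its case split, bounding $\binom{k+1}{2}\le \binom{n+1}{2}$ and $\binom{n}{k}\ge\binom{n}{9}$ uniformly for $9\le k\le n/2$. Finally, the connectedness discussion is unnecessary here: the theorem as stated does not ask for a connected matroid.
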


\begin{proof}
    From Theorem \ref{codes} and the equivalence between (a) and (c) in Theorem \ref{equivalence}, we have that there exists a sparse paving matroid $\M$ of rank $k$ and cardinality $n$, having $\lambda\geq \frac{1}{n} \binom{n}{k}$
    circuit-hyperplanes. We know by Corollary \ref{formusp} that
        \[ \ehr(\M,t) = \ehr(\U_{k,n},t) - \lambda \ehr(\T_{k,n},t-1),\]
    and thus
    \begin{align*}
        [t^2]\ehr(\M,t) &= [t^2]\ehr(\U_{k,n},t) - \lambda [t^2]\ehr(\T_{k,n},t-1)\\
        &\leq [t^2]\ehr(\U_{k,n},t) - \frac{1}{n}\binom{n}{k} [t^2]\ehr(\T_{k,n},t-1)\\
        &\leq \binom{k+1}{2}H_{n-1}^2 - \frac{1}{n}\binom{n}{k} \frac{1}{k(n-1)},
    \end{align*}
    where we used Lemmas \ref{cotainf} and \ref{cotasup}. It suffices to analyze when the following inequality is achieved:
        \begin{equation}\label{harmonic} \binom{k+1}{2} H_{n-1}^2 < \frac{1}{n}\binom{n}{k}\frac{1}{k(n-1)}.\end{equation}
    Let us split into some cases:
    \begin{itemize}
        \item If $k=3$, \eqref{harmonic} becomes
        \[ 6H_{n-1}^2 \leq \frac{1}{3n(n-1)} \binom{n}{3}.\]
        Since $H_{n-1}^2 \sim \log(n)^2$, we see that the right-hand-side grows much faster than the left-hand-side. In particular, the inequality holds for all $n\geq 10439$. Also, we can verify with a computer the following finite cases $3589 \leq n \leq 10438$ and see that for all of them one has $[t^2] \ehr(\M,t) < 0$. This proves that there exist counterexamples of rank $3$ for all $n\geq 3589$.
        \item If $k=4,5,6,7,8$, analogous considerations show that for $n\geq 104$, we can always find such counterexamples.
        \item If $k\geq 9$, recalling that in Remark \ref{rango} we stated that the Ehrhart polynomial of a matroid is equal to that of its dual, we can assume that $2k\leq n$ and consider a stronger version of inequality \eqref{harmonic}:
            \[ \binom{n+1}{2} H_{n-1}^2 < \frac{1}{n} \binom{n}{9} \frac{1}{n(n-1)},\]
        which holds for all $n\geq 55$. By checking manually the cases $n=20,\ldots, 54$, we prove also a major part of Theorem \ref{greaterthan9} (the case $n=19$ is addressed in the last section).\qedhere
    \end{itemize}
\end{proof}

\section{The rank two case}\label{sec:seven}

In a previous version of this manuscript it was proved that the Ehrhart polynomials of the base polytopes of all sparse paving matroids of rank $2$ have positive coefficients. After it appeared, Ferroni, Jochemko and Schr\"oter proved in \cite{fjs} that in fact \emph{all} matroids of rank $2$ are Ehrhart positive using a much shorter and insightful approach.

We will sketch here how the original proof of the Ehrhart positivity of sparse paving of rank $2$ matroids was achieved in the first place. The proof of a technical inequality involving Stirling numbers (which can be avoided using the approach of \cite{fjs}) is omitted.

\subsection{Stable sets in the Johnson graph} 

It is worth noticing that the maximum stable set that one can construct on the Johnson Graph $J(n,2)$ has cardinality $\lfloor \frac{n}{2}\rfloor$. In fact, two nodes are not connected if and only if they correspond to disjoint $2$-sets. If we choose any $\lfloor \frac{n}{2}\rfloor + 1$ nodes, they correspond to that number of $2$-subsets of $\{1,\ldots,n\}$ and, among all of their $2\lfloor\frac{n}{2}\rfloor + 2 > n$ elements, there must be at least one repetition. This says that such a set is not stable. By choosing the sets $\{1,2\},\{3,4\},\ldots$ we can construct a stable set of cardinality $\lfloor\frac{n}{2}\rfloor$. 

In particular, if $\M$ is a sparse paving matroid of cardinality $n$ and rank $2$, the maximum number of circuit-hyperplanes $\M$ can have is $\lfloor\frac{n}{2}\rfloor$.

\subsection{Ehrhart positivity for rank two sparse paving matroids}

The result that gives us the desired Ehrhart positivity is the following.

\begin{prop}\label{posi}
    For every integer $n\geq 3$ the polynomial
    \[P_n(t) = \ehr(\U_{2,n},t) - \left\lfloor\frac{n}{2}\right\rfloor \ehr(\T_{2,n},t-1)\]
    has positive coefficients.
\end{prop}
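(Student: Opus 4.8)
The plan is to turn the statement into a family of explicit inequalities between the coefficients of $\ehr(U_{2,n},t)$ and $\ehr(T_{2,n},t-1)$, reduce those to a single clean inequality involving unsigned Stirling numbers of the first kind, and then dispatch it by a short case analysis plus a finite check.

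\emph{Step 1: an explicit shape for $P_n$.} First I would specialize Proposition~\ref{formulasa} to $k=2$ (using $\sum_{j=0}^{1}\binom{n-3+j}{j}=n-1$) and shift, obtaining
\[\ehr(T_{2,n},t-1)=\frac{\big((n-2)t+1\big)\,t(t+1)\cdots(t+n-3)}{(n-1)!}.\]
Next I would record the (well-known, and in any case one-line by inclusion--exclusion on the hypersimplex $\mathscr{P}(U_{2,n})=\Delta_{2,n}$) identity
\[\ehr(U_{2,n},t)=\binom{2t+n-1}{n-1}-n\binom{t+n-2}{n-1}=\frac{(2t+1)(2t+2)\cdots(2t+n-1)-n\,t(t+1)\cdots(t+n-2)}{(n-1)!}.\]
Writing $m=\lfloor n/2\rfloor$ and pulling out the common factor $t(t+1)\cdots(t+n-3)$ from the two subtracted terms yields
\[P_n(t)=\frac{1}{(n-1)!}\left[\ \prod_{i=1}^{n-1}(2t+i)\ -\ L_n(t)\cdot\prod_{i=0}^{n-3}(t+i)\ \right],\qquad L_n(t)=\big(n+m(n-2)\big)t+\big(n(n-2)+m\big),\]
a difference of two polynomials of degree $n-1$.

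\emph{Step 2: reduction to Stirling inequalities.} Expanding $\prod_{i=0}^{\ell-1}(x+i)=\sum_k{{\ell}\brack{k}}x^k$ gives
\[[t^j]\ehr(U_{2,n},t)=\frac{2^j{{n}\brack{j+1}}-n{{n-1}\brack{j}}}{(n-1)!},\qquad [t^j]\ehr(T_{2,n},t-1)=\frac{(n-2){{n-2}\brack{j-1}}+{{n-2}\brack{j}}}{(n-1)!},\]
so the Proposition is equivalent to $2^j{{n}\brack{j+1}}-n{{n-1}\brack{j}}>m\big((n-2){{n-2}\brack{j-1}}+{{n-2}\brack{j}}\big)$ for $0\le j\le n-1$. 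Using $m\le n/2$ together with the two-step recurrence ${{n}\brack{j+1}}={{n-2}\brack{j-1}}+(2n-3){{n-2}\brack{j}}+(n-1)(n-2){{n-2}\brack{j+1}}$ and ${{n-1}\brack{j}}={{n-2}\brack{j-1}}+(n-2){{n-2}\brack{j}}$, and clearing a factor of $2$, everything collapses to the single inequality (call it $(\ast)$):
\[(2^{j+1}-n^2){{n-2}\brack{j-1}}+(2n-3)(2^{j+1}-n){{n-2}\brack{j}}+2^{j+1}(n-1)(n-2){{n-2}\brack{j+1}}>0,\qquad 0\le j\le n-1.\]
Since for $n$ even $m=n/2$ exactly, $(\ast)$ is there equivalent to $[t^j]P_n>0$; for $n$ odd $(\ast)$ is slightly stronger than needed. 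So it suffices to prove $(\ast)$.

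\emph{Step 3: proving $(\ast)$, and the main obstacle.} I would split on the size of $2^{j+1}$ relative to $n$ and $n^2$. If $2^{j+1}\ge n^2$, all three summands of $(\ast)$ are $\ge 0$; it is strictly positive at $j=n-1$ (where $(\ast)$ reads $2^n-n^2>0$, valid for $n\ge 5$) and for $j<n-1$ the middle summand is strictly positive. If $2^{j+1}<n^2$, only the first summand (and, when $2^{j+1}<n$, also the middle) can be negative, with absolute value at most $n^2{{n-2}\brack{j-1}}$ (respectively, plus at most $2n^2{{n-2}\brack{j}}$), so after dropping the non-negative middle term it is enough to show $(n-2){{n-2}\brack{j+1}}>{{n-2}\brack{j-1}}$, i.e.\ that ${{n-2}\brack{j-1}}/{{n-2}\brack{j+1}}<n-2$ in the range $2^{j+1}<n^2$. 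This is the crux, and the delicate point of the whole argument: in that range $j=O(\log n)$, so writing ${{\ell}\brack{k}}=(\ell-1)!\,e_{k-1}\!\big(1,\tfrac12,\dots,\tfrac1{\ell-1}\big)$ and using the elementary bound $e_k/e_{k-1}\ge \tfrac{(N-k+1)\min_i a_i}{k}$ for positive reals $a_1,\dots,a_N$ gives ${{n-2}\brack{j+1}}/{{n-2}\brack{j}}\ge\frac{(n-2-j)}{j(n-3)}$, whence ${{n-2}\brack{j-1}}/{{n-2}\brack{j+1}}$ is $O\big(j(j-1)\big)=O\big((\log n)^2\big)$, which is eventually below $n-2$; the finitely many remaining small $n$ — including the genuinely degenerate cases $n=3$ and $n=4$, where the leading coefficient of $P_n$ vanishes and in fact $P_3(t)=t+1$ and $P_4(t)=(t+1)^2$ — are then checked by directly computing $P_n(t)$. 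The honest obstacle is this $(\log n)^2$-versus-$n$ comparison: a crude version only kicks in for rather large $n$, so a careful estimate (or a better ratio bound for consecutive Stirling numbers of the first kind, equivalently consecutive $e_r$ of $1,\dots,n-1$) is needed to keep the finite check small.

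\emph{An alternative worth trying first.} One can observe that $(t+1)\mid P_n(t)$ for $n\ge 4$ and $(t+1)(t+2)\mid P_n(t)$ for $n\ge 5$: this is immediate from Ehrhart--Macdonald reciprocity, since $\Delta_{2,n}$ and $2\Delta_{2,n}$ have no lattice points in their relative interiors for $n\ge 5$, as do $2\,\mathscr{P}(T_{2,n})$ and $3\,\mathscr{P}(T_{2,n})$. Because multiplying a polynomial with positive coefficients by $(t+1)(t+2)$ preserves coefficient-positivity, it then suffices to prove that the lower-degree quotient $R_n(t)=P_n(t)/\big((t+1)(t+2)\big)$ has positive coefficients; I would attempt the Step~3 estimates on $R_n$, where the degree drop by two may shrink the finite verification.
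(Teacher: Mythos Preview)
Your route is essentially the paper's: both expand the two Ehrhart polynomials in unsigned Stirling numbers of the first kind, reduce positivity of each coefficient to a single Stirling-number inequality, and finish with a ratio bound on consecutive Stirling numbers plus a finite check. The differences are organizational. The paper starts from the weighted-Lah formula for $[t^m]\ehr(U_{2,n},t)$ (getting only a lower bound rather than your exact $2^j{n\brack j+1}-n{n-1\brack j}$), reduces instead to ${n\brack m-1}\le(2^m-m-2){n\brack m+1}$, and uses the Newton-inequality bound ${n\brack m+1}/{n\brack m}\ge 2\bigl(\tfrac1m-\tfrac1n\bigr)$ coming from real-rootedness; its residual finite check is over the degree $m$ (handling $m\le 12$ one at a time, each case still an inequality valid for all $n$), whereas yours is over small $n$. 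Your inclusion--exclusion start and the elementary $e_k/e_{k-1}$ bound are pleasant alternatives that avoid importing the weighted-Lah machinery.

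There is, however, one genuine gap in Step~3. In the sub-regime $2^{j+1}<n$ both the first \emph{and} the middle summand of $(\ast)$ are negative, but your reduction ``drop the non-negative middle term and prove $(n-2){n-2\brack j+1}>{n-2\brack j-1}$'' only applies when $n\le 2^{j+1}<n^2$. When $2^{j+1}<n$ you must also absorb the term $(2n-3)(n-2^{j+1}){n-2\brack j}$, and your stated bound ${n-2\brack j+1}/{n-2\brack j}\ge (n-2-j)/\bigl(j(n-3)\bigr)$ is only of order $1/j$, so it is not immediate that the positive term $2^{j+1}(n-1)(n-2){n-2\brack j+1}$ dominates. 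The fix is short: dividing $(\ast)$ by ${n-2\brack j+1}$ and inserting both of your ratio bounds, the requirement becomes (for $n$ large compared with $j$) essentially $2^{j+1}>j(j+1)$, which holds for every $j\ge 1$; but as written the argument is incomplete in that sub-regime, and making the constants explicit is precisely what controls how large your finite check in $n$ has to be.
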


\begin{proof}[Sketch of proof]
   By Theorem \ref{formula-ehrhart-wlah} we have that
        \begin{align*}
            [t^m]\ehr(\U_{2,n}, t) &= \frac{1}{(n-1)!} \left(W(0,n,m+1)A(m,1) + W(1,n,m+1)A(m,0)\right)\\
            &= \frac{1}{(n-1)!} \left( W(0,n,m+1) (2^m-m-1) + W(1,n,m+1)\right).
        \end{align*}
    From \cite[Proposition 3.10]{ferroni1} we see that $W(1,n,m+1) \geq (n-1)W(0,n-1,m+1)$. Hence:
    \[ [t^m]\ehr(\U_{2,n},t) \geq \frac{1}{(n-1)!}\left( \stirling{n}{m + 1} (2^m-m-1) + (n-1) \stirling{n - 1} {m + 1}\right).\]
    Working with the formula of Proposition \ref{formulasa} when $k=2$ we can get:
    \[ [t^m] \ehr(\T_{2,n},t-1) = \frac{1}{(n-1)!} \left( \stirling{n-2}{m} + (n-2)\stirling{n-2}{m-1} \right).\]
    Hence, to prove that the coefficients of $P_n(t)$ are positive, it suffices to show that for each $0\leq m\leq n-1$, the following inequality holds:
        \begin{equation}\label{target}
        \stirling{n}{m + 1} (2^m-m-1) + (n-1) \stirling{n - 1}{m + 1} \geq \frac{n}{2} \left( \stirling{n-2}{m} + (n-2)\stirling{n-2}{m-1} \right).\end{equation}
    The presence of the factor $2^m-m-1$ on the left-hand-side multiplying a Stirling number that is bigger than both of the ones appearing on the right in \eqref{target} helps to build an intuition: the expression on the left is likely to be much larger than the expression on the right. However, to give a full and rigorous proof of this inequality, several manipulations are required using the recurrence that Stirling numbers of the first kind satisfy, along with the fact that they form a unimodal and log-concave sequence when either fixing the upper or the lower parameter; see \cite{sibuya} for a thorough list of inequalities that can be used to prove \eqref{target}.
\end{proof}

\begin{cor}
    If $\M$ is a sparse paving matroid of rank $2$, then $\M$ is Ehrhart positive.
\end{cor}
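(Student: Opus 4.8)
The plan is to prove Proposition~\ref{posi} directly, since the corollary follows immediately from Corollary~\ref{formusp} together with the observation (just made) that a rank-$2$ sparse paving matroid on $n$ elements has at most $\lfloor n/2\rfloor$ circuit-hyperplanes, and that relaxing circuit-hyperplanes only decreases Ehrhart coefficients (because $\ehr(T_{2,n},t-1)$ has nonnegative coefficients, by the remark following Proposition~\ref{formulasa}); hence the coefficients of $\ehr(M,t)$ are bounded below coefficientwise by those of $P_n(t)$. So it suffices to show $P_n(t)$ has positive coefficients.

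First I would write down $\ehr(U_{2,n},t)$ and $\ehr(T_{2,n},t-1)$ as explicit polynomials in $t$. Both are degree-$(n-1)$ polynomials. For $T_{2,n}$, Proposition~\ref{formulasa} with $k=2$ gives
\[
\ehr(T_{2,n},t)=\frac{1}{n-1}\binom{t+n-2}{n-2}\left(1+(n-2)t\right),
\]
and then one substitutes $t\mapsto t-1$, which turns $\binom{t+n-2}{n-2}$ into $\binom{t+n-3}{n-2}=\frac{1}{(n-2)!}(t+1)(t+2)\cdots(t+n-3)\cdot\frac{1}{\,?\,}$ — more precisely $\binom{t+n-3}{n-2}=\frac{(t+n-3)(t+n-4)\cdots t \cdot (t-1)}{(n-2)!}$, a product with a factor $t-1$; multiplying by the linear factor $1+(n-2)(t-1)=(n-2)t-(n-3)$ keeps everything with a manageable factorization. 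For $U_{2,n}$ one can use the known formula from \cite{ferroni1}; alternatively, since $U_{2,n}$ is the hypersimplex $\Delta_{2,n}$, its Ehrhart polynomial is classical. The cleanest route is probably to note $U_{2,n}=\widetilde{T_{2,n}}$ relaxed $\binom{n}{2}-n$ times... no — better: use $\ehr(U_{2,n},t)=\ehr(T_{2,n},t)+(\binom n2 - n+1)\,\ehr(T_{2,n},t-1)$? That is not right either since $U_{2,n}$ is obtained from $T_{2,n}$ by a sequence of relaxations and Theorem~\ref{ehrhartrelax} applies at each step, but the minimal matroid added each time is $T_{2,n}$ only at the first step; subsequent steps add $T_{2,n'}$-type pieces glued on facets, so I would instead just take the explicit formula for $\ehr(U_{2,n},t)$ from \cite{ferroni1} as a black box.

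With explicit expressions in hand, I would form $P_n(t)=\ehr(U_{2,n},t)-\lfloor n/2\rfloor\,\ehr(T_{2,n},t-1)$ and extract $[t^i]P_n(t)$ for $1\le i\le n-1$. The leading term $i=n-1$ and the terms $i=n-2$, $i=1$, $i=0$ are positive by Proposition~\ref{coef} and \eqref{basica}, so the real content is the range $2\le i\le n-3$. For these I expect each coefficient to be expressible as a sum over products of the "shifted factorial" roots $\{-1,-2,\dots,-(n-3)\}$ (from the $T_{2,n}$ piece, shifted) against the roots of the $U_{2,n}$ polynomial; comparing term by term, the $U_{2,n}$ contribution dominates because $\binom n2$ (the number of bases, controlling the volume and lower-order coefficients) grows quadratically in $n$ while $\lfloor n/2\rfloor$ times the $T_{2,n}$-coefficient grows only linearly. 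Concretely I would aim for a coefficientwise bound of the shape $[t^i]\ehr(U_{2,n},t)\ge \tfrac n2\,[t^i]\ehr(T_{2,n},t-1)$, analogous to but in the opposite direction of the estimates in Section~6: bound $[t^i]\ehr(T_{2,n},t-1)$ from above by isolating dominant terms, and bound $[t^i]\ehr(U_{2,n},t)$ from below. The main obstacle is getting these two one-sided estimates tight enough \emph{simultaneously for all $i$ in the middle range} — the Section~6 bounds are only designed for $i=2$ and are quite lossy, so I would likely need a more careful, $i$-uniform argument, perhaps by writing both polynomials via their roots and using that the $T_{2,n}$-polynomial (after the shift) has all roots in $[-(n-3),1]$ while a multiplicativity/interlacing comparison with $U_{2,n}$ forces the desired domination. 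If a clean uniform inequality proves elusive, the fallback is to check small $n$ by hand and handle large $n$ asymptotically, exactly as in Section~6.
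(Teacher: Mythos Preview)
Your high-level plan is correct and matches the paper's: reduce to Proposition~\ref{posi}, then prove the coefficientwise inequality $[t^m]\ehr(U_{2,n},t)\ge \lfloor n/2\rfloor\,[t^m]\ehr(T_{2,n},t-1)$ for each $m$. The gap is in the execution, and your stated fallback does not close it.

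The paper's route is to put both sides in a common currency, namely unsigned Stirling numbers of the first kind. From the weighted-Lah formula one gets a lower bound $[t^m]\ehr(U_{2,n},t)\ge \tfrac{1}{(n-1)!}\bigl({n\brack m+1}(2^m-m-1)+(n-1){n-1\brack m+1}\bigr)$, and from Proposition~\ref{formulasa} with $k=2$ one gets $[t^m]\ehr(T_{2,n},t-1)=\tfrac{1}{(n-1)!}\bigl({n-2\brack m}+(n-2){n-2\brack m-1}\bigr)$ exactly. After Stirling recurrences, the target reduces to ${n\brack m-1}\le (2^m-m-2){n\brack m+1}$, and the decisive tool is Newton's inequality (from real-rootedness of the Stirling generating polynomial), which yields ${n\brack m+1}/{n\brack m-1}\ge 4/(m^2(m^2-1))$; since $m^2(m^2-1)/4\le 2^m-m-2$ for $m\ge 13$, this handles all large $m$ uniformly in $n$. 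The finitely many remaining values $m=2,\dots,12$ are then dispatched individually (each still for all $n$, via unimodality and a finite check).

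Your proposal never gets to Stirling numbers or any log-concavity input, and the ``roots/interlacing'' idea is too vague to substitute for it. More importantly, your fallback---check small $n$ by hand, do large $n$ asymptotically ``exactly as in Section~6''---does not work here: Section~6 fixes a single coefficient index and lets $n\to\infty$, whereas Proposition~\ref{posi} requires the inequality for \emph{every} $2\le m\le n-1$, a range that grows with $n$. An asymptotic in $n$ alone gives no uniformity in $m$, so you would be left with infinitely many cases. The paper avoids this by slicing the other way (fix $m$, prove for all $n$), which is only feasible because the Stirling/Newton machinery produces an $n$-free comparison for $m\ge 13$.
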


\section{Final remarks}\label{sec:eight}

We have proved that for every $k\geq 3$ there is a (connected) matroid of rank $k$ that is not Ehrhart positive. We have also outlined a proof of the fact that we cannot construct a counterexample within the family of sparse paving matroids on rank $2$.

Also, from the proof of Theorem \ref{thm:el-clave} it follows that for all $n\geq 20$ there is a (connected) matroid of cardinality $n$ that is not Ehrhart positive. It is natural to ask if there are smaller counterexamples.

In fact, for small values of $k$ and $n$ there exist much better bounds and many precise values for the maximum number of circuit-hyperplanes that a sparse paving matroid of rank $k$ and cardinality $n$ can have. See for instance \cite[Table 2]{brouwer} by Brouwer and Etzion. Using these values, one can prove that there exists a sparse paving matroid with $19$ elements, rank $9$ and having $6726$ circuit-hyperplanes that is not Ehrhart positive.

We can rule out the existence of sparse paving matroids with less than $18$ elements. To prove this, it suffices to give a good enough upper bound for the maximum number of circuit-hyperplanes a matroid of rank $k$ and $n$ elements can have.

\begin{lema}\label{numbercircuithyperplanesparsepaving}
    Let $\M$ be a sparse paving matroid of rank $k$ having $n$ elements. Then, the number of circuit-hyperplanes $\lambda$ of $\M$ satisfies the following inequality:
        \begin{equation}\label{lambdakn} \lambda \leq \binom{n}{k}\min \left\{\frac{1}{k+1}, \frac{1}{n-k+1}\right\}.\end{equation}
\end{lema}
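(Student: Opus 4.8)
The plan is to prove both bounds in \eqref{lambdakn} by an elementary double-counting argument, using the dictionary of Theorem \ref{equivalence}: the circuit-hyperplanes of a rank-$k$ sparse paving matroid $M$ on $E=\{1,\dots,n\}$ form a stable set $\mathscr{S}$ of the Johnson graph $J(n,k)$, i.e.\ a family of $k$-subsets of $E$ any two distinct members of which meet in at most $k-2$ elements. Indeed, non-adjacency in $J(n,k)$ rules out an intersection of size $k-1$, and distinctness rules out size $k$. Write $\lambda=|\mathscr{S}|$.

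For the bound $\lambda\le\frac{1}{k+1}\binom{n}{k}$ I would count incidences $(H,S)$ where $H\in\mathscr{S}$ and $S$ is a $(k+1)$-subset of $E$ with $H\subseteq S$. Each $H$ lies in exactly $n-k$ such sets $S$. Conversely, a fixed $(k+1)$-set $S$ contains at most one member of $\mathscr{S}$, since two distinct $k$-subsets of $S$ would intersect in exactly $k-1$ elements, which is forbidden. Hence $\lambda(n-k)\le\binom{n}{k+1}$, and since $\binom{n}{k+1}=\frac{n-k}{k+1}\binom{n}{k}$ this gives $\lambda\le\frac{1}{k+1}\binom{n}{k}$.

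Symmetrically, counting incidences $(H,A)$ where $A$ is a $(k-1)$-subset of $E$ with $A\subseteq H\in\mathscr{S}$: each $H$ contains exactly $k$ such sets $A$, while a fixed $(k-1)$-set $A$ lies in at most one member of $\mathscr{S}$ (otherwise two circuit-hyperplanes would share at least $k-1$ elements). Thus $\lambda k\le\binom{n}{k-1}=\frac{k}{n-k+1}\binom{n}{k}$, i.e.\ $\lambda\le\frac{1}{n-k+1}\binom{n}{k}$; taking the minimum of the two estimates yields \eqref{lambdakn}. Equivalently, this second bound is just the first bound applied to the dual $M^*$, which by Remark \ref{circuitcohyperplane} and Remark \ref{rango} is sparse paving of rank $n-k$ on $n$ elements with the same number $\lambda$ of circuit-hyperplanes. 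There is essentially no obstacle in this argument; the only points requiring a little care are the binomial identities expressing $\binom{n}{k\pm1}$ in terms of $\binom{n}{k}$ and the bookkeeping of the rank under duality.
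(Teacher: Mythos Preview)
Your argument is correct. Both double-counting bounds are sound: the key observation that any two circuit-hyperplanes of a sparse paving matroid meet in at most $k-2$ elements (equivalently, form a stable set in $J(n,k)$, as in Theorem~\ref{equivalence}) immediately forces each $(k+1)$-superset and each $(k-1)$-subset to be incident with at most one circuit-hyperplane, and the binomial identities $\binom{n}{k+1}=\tfrac{n-k}{k+1}\binom{n}{k}$ and $\binom{n}{k-1}=\tfrac{k}{n-k+1}\binom{n}{k}$ finish the job.

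Your route differs from the paper's. The paper does not carry out either count directly: it invokes an external result (\cite[Theorem~4.8]{merino}) for the bound $\lambda\le\tfrac{1}{n-k+1}\binom{n}{k}$, and then obtains the other bound by applying the same citation to the dual $M^*$, after noting that complementation gives a bijection between the circuit-hyperplanes of $M$ and those of $M^*$. Your proof is entirely self-contained and is essentially the classical packing/Johnson-type argument for constant-weight binary codes of minimum distance $4$; it has the advantage of not depending on an outside reference, and it makes transparent why the two bounds are dual to each other (your second count is literally the first count applied to the complements). The paper's approach is shorter on the page but hides the combinatorics behind the citation. One small quibble: Remark~\ref{rango} concerns Ehrhart polynomials rather than circuit-hyperplanes; the fact you actually need for the duality step is Remark~\ref{circuitcohyperplane} alone (together with the definition of sparse paving), which already gives the bijection $H\mapsto E\smallsetminus H$ between circuit-hyperplanes of $M$ and of $M^*$.
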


\begin{proof}
    If $\M$ is paving, in particular all the $\binom{n}{k-1}$ subsets of cardinality $k-1$ are independent. Let us form a bipartite graph where one of the parts has a node for each independent set of cardinality $k-1$ and the other part has the bases of the matroid $\M$, where we put an edge connecting an independent set $I$ with a basis $B$ whenever $I\subseteq B$. Since an independent set $I$ of rank $k-1$ is contained in a unique hyperplane (the flat spanned by $I$ itself), it follows that either $I$ is a hyperplane or $I\subsetneq H$ for a unique $H$ hyperplane. In the latter case, $|H|\geq k$ and since $\M^*$ is paving, this implies that $|H| = k$, so that $H$ is a circuit-hyperplane. Summarizing, each of the nodes of our graph corresponding to independent sets of cardinality $k-1$ has degree $n-k+1$ (when $I$ is itself a hyperplane) or $n-k$ (when $I$ is contained in a unique circuit-hyperplane). In particular, the number of edges of the whole graph is at least $(n-k) \binom{n}{k-1}$. However, by looking at the nodes corresponding to the bases, we know that each basis has degree $k$, so that the number of edges is exactly $k|\mathscr{B}(\M)|$. Hence:
        \[ (n-k)\binom{n}{k-1} \leq k |\mathscr{B}(\M)|,\]
    which translates into
        \[ \left( 1 - \frac{1}{n-k+1}\right) \binom{n}{k} \leq |\mathscr{B}(\M)|.\]
    Since the number of circuit hyperplanes is $\lambda = \binom{n}{k} - |\mathscr{B}(\M)|$, it follows that
        \[ \lambda \leq \frac{1}{n-k+1} \binom{n}{k}.\]
    Finally, using the same reasoning that we used above but with $\M^*$ instead of $\M$, as the number of circuit-hyperplanes is the same for $\M$ and $\M^*$ (see Remark \ref{circuitcohyperplane}), it follows also that
        \[ \lambda \leq \frac{1}{k + 1} \binom{n}{n-k}= \frac{1}{k+1}\binom{n}{k},\]
    from where one concludes the inequality of the statement.
\end{proof}

\begin{cor}
    If $\M$ is a sparse paving matroid on $n\leq 17$ elements, then $\M$ is Ehrhart positive.
\end{cor}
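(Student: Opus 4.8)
The plan is to turn the statement into a finite computation by combining the closed formula of Corollary~\ref{formusp} with the estimate of Lemma~\ref{numbercircuithyperplanesparsepaving}. First I would dispose of the trivial and already-settled ranges. Since $\ehr(\mathscr{P}(M_1\oplus M_2),t)=\ehr(\mathscr{P}(M_1),t)\,\ehr(\mathscr{P}(M_2),t)$, a product of polynomials with positive coefficients again has positive coefficients, and the connected components of a sparse paving matroid are again sparse paving (indeed they are uniform, which one checks directly from the definitions: if $M=M_1\oplus M_2$ is sparse paving of rank $k=k_1+k_2$ with $k_2\geq 1$, every circuit of $M_1$ has size $\geq k>k_1$, so no $k_1$-subset of the ground set of $M_1$ is dependent, i.e.\ $M_1=U_{k_1,n_1}$), it suffices to prove the statement for connected matroids. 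For a connected $M$ on $n$ elements one has $\dim\mathscr{P}(M)=n-1$, so if $n\leq 7$ then $\mathscr{P}(M)$ is a generalized permutohedron of dimension $\leq 6$ and Ehrhart positivity follows from Proposition~\ref{coef}. Ranks $k\leq 1$ are trivial ($\mathscr{P}(M)$ is a point or a simplex), rank $k=2$ is covered by Proposition~\ref{posi} (any rank-$2$ sparse paving matroid has at most $\lfloor n/2\rfloor$ circuit-hyperplanes, so its Ehrhart polynomial dominates $P_n(t)$ coefficientwise), and by Remark~\ref{rango} ranks $k\geq n-2$ reduce to these. Using Remark~\ref{rango} once more we may assume $2k\leq n$. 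Thus only the finitely many pairs $(n,k)$ with $8\leq n\leq 17$ and $3\leq k\leq\lfloor n/2\rfloor$ remain.

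In these finitely many cases I would argue coefficientwise. By Corollary~\ref{formusp} and the remark after Proposition~\ref{formulasa},
\[ [t^m]\ehr(M,t)=[t^m]\ehr(U_{k,n},t)-\lambda\,[t^m]\ehr(T_{k,n},t-1),\qquad [t^m]\ehr(T_{k,n},t-1)\geq 0,\]
so with $\lambda_{k,n}:=\binom{n}{k}\min\{\tfrac{1}{k+1},\tfrac{1}{n-k+1}\}$ from Lemma~\ref{numbercircuithyperplanesparsepaving} we obtain
\[ [t^m]\ehr(M,t)\ \geq\ [t^m]\ehr(U_{k,n},t)-\lambda_{k,n}\,[t^m]\ehr(T_{k,n},t-1),\]
and it suffices to check that the right-hand side is $\geq 0$ for every admissible $m$. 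Both terms are completely explicit: $[t^m]\ehr(U_{k,n},t)$ from \cite[Theorem~4.3]{ferroni1} and $[t^m]\ehr(T_{k,n},t-1)$ from Proposition~\ref{formulasa}. Moreover, by \eqref{basica} and Proposition~\ref{coef} the coefficients of degrees $0$, $1$, $d-3$, $d-2$, $d-1$ of a connected matroid polytope ($d=n-1$) are automatically positive, so only the range $2\leq m\leq n-5$ needs to be examined; this leaves a few hundred explicit rational inequalities, which one verifies directly (by hand or by computer).

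The step I expect to be the crux — and the only place the argument could fail — is whether the crude bound $\lambda_{k,n}$ of Lemma~\ref{numbercircuithyperplanesparsepaving} is already strong enough for every pair $(n,k)$ with $8\leq n\leq 17$ and $3\leq k\leq\lfloor n/2\rfloor$; one does not expect this to remain true for $n=18$, which is consistent with the smallest counterexamples appearing at $n=19$. Should the estimate turn out to be too weak for some pair $(n,k)$, one can instead substitute the exact maximal numbers of circuit-hyperplanes tabulated by Brouwer and Etzion \cite{brouwer}; for $n\leq 17$ this refinement is unnecessary. Combining the finite verification with the reductions of the first paragraph yields the Corollary.
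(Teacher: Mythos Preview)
Your proposal is correct and follows essentially the same approach as the paper: bound the number of circuit-hyperplanes by $\lambda_{k,n}$ from Lemma~\ref{numbercircuithyperplanesparsepaving}, plug into Corollary~\ref{formusp}, and verify a finite list of explicit inequalities. The paper simply computes $\ehr(U_{k,n},t)-\lambda_{k,n}\,\ehr(T_{k,n},t-1)$ for all $1\le k\le n\le 17$ and checks positivity directly; your preliminary reductions (to connected matroids, $n\ge 8$, $3\le k\le\lfloor n/2\rfloor$, and degrees $2\le m\le n-5$) are valid and shrink the computation, but are not needed.
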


\begin{proof}
    Let us denote by $\lambda_{k,n}$ the expression on the right-hand-side of inequality \eqref{lambdakn}. Calculating explicitly the polynomials $\ehr(\U_{k,n}, t) - \lambda_{k,n} \ehr(\T_{k,n},t-1)$ for $1\leq k \leq n \leq 17$, we can see that they all have positive coefficients.
\end{proof}

\begin{obs}
    According to \cite{brouwer} the maximum size that a stable set in the Johnson Graph $J(18,9)$ can have is at least $3540$, which improves the bound coming from Theorem \ref{codes}, $\frac{1}{18}\binom{18}{9}=2702$. However, using the bound from Lemma \ref{numbercircuithyperplanesparsepaving} we get that this quantity is at most $4862$. A sharper inequality using the so-called ``Johnson bound'' yields that, in fact, this quantity is less or equal than $4420$. In other words, we know that the maximum number of circuit-hyperplanes that a matroid on $18$ elements and rank $9$ can have lies between $3540$ and $4420$, and this seems to be the best we can currently assert for $k=9$ and $n=18$ (see \cite{upper}). However,
        \[ \ehr(\U_{9,18},t) - 4240 \ehr(\T_{9,18},t-1)\]
    has a negative cubic coefficient. This implies that if we could improve our $3540$ to a $4240$, then there would be a matroid on $18$ elements that is not Ehrhart positive.
\end{obs}

\begin{obs}\label{cubic}
    It is not true that if a matroid has a negative Ehrhart coefficient, then in particular the quadratic coefficient must be negative. For example our construction yields a matroid with $22$ elements, rank $7$ and $7752$ circuit-hyperplanes that has a negative coefficient only on degree $3$. 
\end{obs}

\begin{obs}
    In \cite{ferroni2} the author conjectured that the Ehrhart $h^*$-polynomial of the base polytope of a matroid is always real-rooted. This is a stronger form of another conjecture by De Loera, Haws and K\"oppe in \cite{deloera} asserting the unimodality of the coefficients of the corresponding $h^*$-vector. By using the upper bounds for the number of circuit-hyperplanes of a sparse paving matroid of Lemma \ref{numbercircuithyperplanesparsepaving}, we have verified using a computer that up to $35$ elements all sparse paving matroids are $h^*$-real-rooted.
\end{obs}

\section{Acknowledgements}

The author wants to thank the reviewers of the article for many important suggestions and comments they made to improve several parts of the exposition. Also, he wants to express his gratitude to Matthias Beck, Federico Castillo, Katharina Jochemko, Victor Reiner, Benjamin Schr\"oter and Mario Sanchez for the extremely useful comments and remarks they made on different versions of this manuscript and which served to enhance several aspects of it.

\bibliographystyle{amsalpha}
\bibliography{bibliography}

\end{document}